\documentclass[11pt]{article}

% Packages and macros go here
%\hypersetup{urlcolor=blue,citecolor=red}
\usepackage{amsthm}
\usepackage{amsfonts}
\usepackage{amsmath}
\usepackage{amssymb}
\usepackage{colonequals}
\usepackage{enumerate}
\usepackage{framed}
\usepackage{graphicx}
\usepackage{setspace}
\usepackage{tikz-cd}

\newcommand{\pF}{\ensuremath{\partial_F}}
\newcommand{\pL}{\ensuremath{\partial_L}}
\newcommand{\pu}{\ensuremath{\partial_{L,u}}}

\newcommand{\what}[1]{\widehat{#1}}
\newcommand{\wtilde}[1]{\widetilde{#1}}
\newcommand{\ol}[1]{\overline{#1}}

\newcommand{\ve}{\ensuremath{\varepsilon}}

\newcommand{\cA}{\ensuremath{\mathcal{A}}}
\newcommand{\cF}{\ensuremath{\mathcal{F}}}
\newcommand{\g}{\ensuremath{\mathfrak{g}}}
\newcommand{\cO}{\ensuremath{\mathcal{O}}}
\newcommand{\R}{\ensuremath{\mathbb{R}}}
\newcommand{\s}[1]{\ensuremath{\mathbf{#1}}}
\newcommand{\cU}{\ensuremath{\mathcal{U}}}
\newcommand{\U}{\ensuremath{\mathbb{U}}}
\newcommand{\cX}{\ensuremath{\mathcal{X}}}

\newtheorem{theorem}{Theorem}[section]
\newtheorem{theorem*}{Theorem}
\newtheorem{lemma}[theorem]{Lemma}

\newtheorem{corollary}[theorem]{Corollary}

\theoremstyle{definition}
\newtheorem{definition}[theorem]{Definition}

\newtheorem*{remark}{Remark}

\newcommand{\TheTitle}{The Discrete-Time Geometric Maximum Principle}

% Sets running headers as well as PDF title and authors
%\headers{The Discrete-Time Geometric Maximum Principle}{\TheAuthors}

\date{November 1, 2016}

\title{\TheTitle} % \thanks{}

\author{
  Robert Kipka\thanks{Department of Mathematical Sciences, Lake Superior State University, Sault Ste Marie, MI 49783, USA
    (\texttt{rkipka@lssu.edu}).}
  \and
  Rohit Gupta\thanks{Institute for Mathematics and its Applications, University of Minnesota, Minneapolis, MN 55455, USA
    (\texttt{gupta311@umn.edu}).}
}

\begin{document}

\maketitle

\begin{abstract}
We establish a variety of results extending the well-known Pontryagin maximum principle of optimal control to discrete-time optimal control problems posed on smooth manifolds. These results are organized around a new theorem on critical and approximate critical points for discrete-time geometric control systems. We show that this theorem can be used to derive Lie group variational integrators in Hamiltonian form; to establish a maximum principle for control problems in the absence of state constraints; and to provide sufficient conditions for exact penalization techniques in the presence of state or mixed constraints. Exact penalization techniques are used to study sensitivity of the optimal value function to constraint perturbations and to prove necessary optimality conditions, including in the form of a maximum principle, for discrete-time geometric control problems with state or mixed constraints.
\end{abstract}

%\begin{keywords}
%discrete-time optimal control, smooth manifolds, constraints, maximum principle, optimization, exact penalization
%\end{keywords}

%\begin{AMS}
%49K21, 49K40, 90C30
%\end{AMS}

\section{Introduction}

The celebrated Pontryagin maximum principle \cite{pontryagin1962} is a powerful tool for analyzing continuous-time optimal control problems for finite-dimensional nonlinear control systems. Since its discovery in the late 1950s, a considerable amount of effort has been spent in extending the maximum principle in various directions, which has resulted in a significant amount of literature. We briefly mention two such directions: the first considers the setting of smooth manifolds instead of Euclidean spaces (see, for example, \cite{AS2004,barbero2009, kipka-ledyaev-2014a, kipka-ledyaev-2015a, sussmann1997}) while the second considers discrete-time optimal control problems for finite-dimensional nonlinear control systems (see, for example, \cite{boltyanskii1978, mordukhovich2006b} or \cite{bourdin2013} and references therein). As discussed in \cite{bourdin2013}, the formulation of the maximum principle for discrete-time optimal control problems becomes quite tricky since the maximization condition cannot be expected to hold in general and in fact some of the early literature on this topic was mathematically incorrect (see \cite{boltyanskii1978, mordukhovich2006b} for a number of interesting counterexamples). However, the maximization condition does hold under appropriate convexity assumptions on the dynamics and moreover an approximate maximization condition can be derived (under suitable assumptions) in the absence of such assumptions (see \cite[Section 6.4]{mordukhovich2006b}).

It is quite surprising to note that there has been almost no investigation into a maximum principle for discrete-time optimal control problems defined on manifolds despite the fact that smooth manifolds arise quite naturally in many practical control problems such as robotics (see, for example, \cite{kobilarov2007, kobilarov2011} and references therein) and spacecraft attitude control (see, for example, \cite{gupta2015, kalabic2016, phogat2015} and references therein).

Indeed the interested reader can find a wide variety of geometric control problems in texts such as \cite{AS2004,bloch2003,bullo-lewis2005,jurdjevic1997geometric}.

With this motivation, in this paper we obtain necessary conditions for optimality for discrete-time optimal control problems defined on manifolds and thereby arrive at a maximum principle for such problems.

The maximum principle is in fact a necessary condition for a control to be \emph{critical} for a cost function, though such a control may not be minimizing. In many cases it is useful to consider necessary conditions for a control to be $\Delta$-\emph{critical} for $\Delta \ge 0$. In particular, this paper is concerned with careful analysis of $\Delta$-critical points for a function
\begin{equation}
\label{eq:jdef}
  J(\s{u}) = \ell(q_n) + \sum_{i = 0}^{n-1} L_i(q_i,u_i),
\end{equation}
where $(u_i)_{i = 0}^{n-1}$ is a control sequence generating a state sequence $(q_i)_{i = 0}^n$ through a discrete-time geometric control system (see Definition \ref{defn:dgcs}).

For $\Delta = 0$ such controls are critical points of $J$ in a classical sense. For $\Delta > 0$ the notion of $\Delta$-critical controls is closely related to that of \emph{strong slope} introduced in \cite{de1980problems}. The reader can find a number interesting applications of strong slope in \cite{guler2010foundations} and our definition of $\Delta$-critical, below in Definition \ref{defn:critical}.

\subsection{Organization and Contributions}

This paper is organized as follows. In the section following we establish novel necessary conditions for $\Delta$-critical controls: Theorem \ref{thm:dmp}. As first applications of Theorem \ref{thm:dmp} we ($i$) derive a discrete-time geometric maximum principle applicable to a wide variety of control problems, including those arising from discretization schemes such as those of Hans Munthe-Kaas \cite{munthe1999high} and ($ii$) relate critical controls to structure-preserving variational integrators for Lie groups \cite{lee2005lie}. This section concretely demonstrates that the maximum principle for discrete-time optimal control problems and the Hamiltonian formulation of variational integrators are different manifestations of the same phenomenon.

We then turn to questions of exact penalization for constrained problems on manifolds, beginning in Section \ref{sec:decrease-princ} with a derivation of a decrease principle in a geometric setting. Such principles are well studied and of much use in optimization \cite{clarke2013, penot2013} and indeed have been considered before in the geometric setting \cite{azagra2007applications}. The main result of this section, Theorem \ref{thm:penalty}, is closely related to the decrease condition and solvability theorem of \cite{azagra2007applications} (see \cite{clarkeand1998} for proofs in the Hilbert space setting). We provide a novel proof in terms of Fr\'echet subgradient without the assumption that the underlying Riemannian manifold is complete. In addition, while the propositions of \cite{azagra2007applications} require local assumptions on the norm of the subgradient, Theorem \ref{thm:penalty} requires assumptions only on a closed set and its Clarke tangent cone. Finally, the conditions of Theorem \ref{thm:penalty} are given in terms of directional derivatives rather than norms of subdifferentials. As such these conditions have a direct interpretation in terms of discrete-time controllability.

Following this we introduce a new constraint qualification, \emph{strict $\varphi_i$-normality}, for discrete-time geometric control problems. We show in Section \ref{sec:decrease-princ} that strict $\varphi_i$-normality may be used to state sufficient conditions for exact penalization and for calmness of the value function in a geometric setting.

In the final section of the paper we apply techniques of exact penalization to derive maximum principles for discrete-time geometric control problems with either pure state or mixed constraints.  As an illustration of these techniques, we conclude the paper with a discussion of a discrete-time geometric optimal control problem with mixed constraints given by a family of smooth inequality constraints $g_j(q_i, u_i) \le 0$, although we emphasize that the results developed below are quite general and are by no means limited to inequality constraints.

Before continuing we provide the main definitions, notations, and standing assumptions used in this paper.

\subsection{Definitions, Notations, and Standing Assumptions}

\begin{definition}
\label{defn:dgcs}
  A \emph{discrete-time geometric control system} is a collection of finite-\\dimensional manifolds $Q$ and $\left\{U_i\right\}_{i = 0}^{n-1}$, not necessarily of the same dimension; a finite collection of closed sets $\U_i \subseteq U_i$; and a finite collection $\left\{F_i\right\}_{i = 0}^{n-1}$ of mappings $F_i : Q \times U_i \rightarrow Q$.
\end{definition}

We write $\cU \subset \prod_{i = 0}^{n-1} U_i$ for the set of all $\s{u}$ satisfying $u_i \in \U_i$ for $0 \le i \le n-1$.

\begin{definition}
A sequence $\s{u} \in \cU$ is said to be a \emph{control sequence}.
\end{definition}

\begin{definition}
Given a control sequence $\s{u}$ and initial state $q_0$, a \emph{state sequence} is the sequence $(q_i)_{i = 0}^n$ determined for $1 \le i \le n$ through
\begin{equation}
\label{eq:update-rule}
  q_i = F_{i-1}(q_{i-1}, u_{i-1}).
\end{equation}
\end{definition}

Throughout this paper we denote sequences using bold so that, for example, $\s{q}$ denotes a sequence $\s{q} = \left(q_i\right)_{i = 0}^n$.
A sequence of sequences will be denoted $\left(\s{q}_k\right)_{k = 1}^\infty$ so that, for example, $\s{q}_k = \left(q_{k,i}\right)_{i = 0}^n$. Letters $q,r$ refer in all cases to states and $u,c$ to controls.

We typically study cost functions $J : \cU \rightarrow \R$ for fixed $q_0$ as defined by \eqref{eq:jdef}, although variations in $q_0$ are considered in Section \ref{subsec:initial-variation}. The following assumptions are in place throughout the paper unless explicitly stated otherwise.

\textbf{Standing Assumptions:} \emph{Functions $\ell$ and $L_i$ in \eqref{eq:jdef} are locally Lipschitz; the maps $F_i$ in \eqref{eq:update-rule} are $C^1$-smooth; and the manifolds $U_i$ are Riemannian with metric $g_i$.}

We remark that the assumption of a Riemannian structure on $U_i$ is made without loss of generality. These metrics induce a product metric $g$ on $\prod_{i = 0}^{n-1}U_i$ through
\begin{equation*}
  g(\s{v}, \s{w}) \colonequals \sum_{i = 0}^{n-1} g_i(v_i, w_i).
\end{equation*}

Finally, we will denote the pushforward of a map $F : M \rightarrow N$ through $DF(q) : T_qM \rightarrow T_{F(q)}N$ and the pullback through $DF(q)^* : T^*_{F(q)}N \rightarrow T^*_qM$. Often we will have need to write $D_q F(q,u) : T_q Q \rightarrow T_{F(q,u)}Q$ for the partial derivative of $F$ with respect to $q$. Likewise, we may write $d_q L_i(q,u) \in T_q^* Q$ for the partial exterior derivative of $L_i : Q \times U_i \rightarrow \R$.

\subsubsection{Nonsmooth Analysis}

Techniques of nonsmooth or variational analysis play a central role in this paper. Here we provide definitions for the particular subgradients, normal cones, and tangent cones used in the paper. A useful introduction to the techniques of nonsmooth analysis on smooth manifolds can be found in \cite{ledyaevzhu2007}. We recommend \cite{clarke2013} or \cite{clarkeand1998} for an introduction to nonsmooth analysis in the context of optimization and control. We also mention as useful references the books \cite{borweinzhu2005,schirotzek2007} and the comprehensive volumes \cite{mordukhovich2006a,mordukhovich2006b}.

\begin{definition}
\label{defn:lipschitz}
A function $f : M \rightarrow \R$ is \emph{locally Lipschitz} at $q \in M$ if there exists a coordinate chart $\varphi : M \rightarrow \R^d$ whose domain $\cO$ includes $q$ such that the function $f \circ \varphi^{-1} : \R^d \rightarrow \R$ is Lipschitz on $\varphi(\cO)$.
\end{definition}

\begin{remark}
  When $M$ is Riemannian with distance function $d$, Definition \ref{defn:lipschitz} is equivalent to the usual metric space definition and we use the two interchangeably in this case.
\end{remark}

\begin{definition}
%\label{defn:lipschitz}
For $f : M \rightarrow \R$ locally Lipschitz, the \emph{lower Dini derivative} is defined, for $v \in T_qM$, by
\begin{equation*}
\underline{D}f(q;v) \colonequals  \liminf_{\lambda \downarrow 0} \frac{f(c_v(\lambda)) - f(q)}{\lambda}
\end{equation*}
where $c_v : \R \rightarrow M$ is any smooth curve satisfying $c_v^\prime(0) = v$.
\end{definition}

\begin{definition}
A function $f : M \rightarrow \R \cup \left\{\infty\right\}$ is \emph{lower semicontinuous} at $q \in M$ if for any sequence $(q_n)_{n = 1}^\infty$ converging to $q$ there holds
\begin{equation*}
f(q) \le  \liminf_{n \rightarrow \infty} f(q_n).
\end{equation*}
\end{definition}

\begin{definition}
\label{defn:frechet}
  A covector $p \in T_q^*M$ is a \emph{Fr\'echet subgradient} for a lower semicontinuous function $f$ at $q$ if there exists a $C^1$-smooth function $g : M \rightarrow \R$ such that
   $p = dg(q)$ and $f - g$ has a local minimum at $q$. The \emph{Fr\'echet subdifferential} $\pF f(q)$ is the (possibly empty) set of all such covectors.
\end{definition}

\begin{definition}
\label{defn:limiting}
  A covector $p \in T_q^*M$ is a \emph{limiting subgradient} for a lower semicontinuous function $f$ at $q$ if there exist sequences $q_n \rightarrow q$ and $p_n \in \pF f(q_n)$ such that $f(q_n) \rightarrow f(q)$ and $p_n \rightarrow p$.
  The \emph{limiting subdifferential} $\pL f(q)$ is the (possibly empty) set of all such covectors.
\end{definition}

Occasionally we will have need for the partial limiting subgradient $\pu f(q,u)$, which is simply the limiting subgradient of the function $u \mapsto f(q,u)$.

\begin{definition}
\label{defn:limiting-normal}
  Given a closed set $S \subset M$, the \emph{limiting normal cone} to $S$ at $s \in S$ is the set $N_S^L(s) \colonequals \pL \chi_S(s)$, where $\chi_S : M \rightarrow \R \cup \left\{\infty\right\}$ is the function
  \begin{equation*}
    \chi_S(s) \colonequals \begin{cases} \hspace{4pt} 0 &s \in S \\
    \hspace{1.5pt} \infty &s \not \in S. \end{cases}
  \end{equation*}
\end{definition}

\begin{definition}
\label{defn:clarke-tangent}
The \emph{Clarke tangent cone} $T_S^C(q)$ to $S$ at $q$ is the polar of the limiting normal cone:
\begin{equation*}
  T_S^C(q) \colonequals \left\{v \in T_q M \, : \, \left<p,v\right> \le 0 \; \mathrm{for} \; \mathrm{all} \; p \in N_S^L(q)\right\}.
\end{equation*}
\end{definition}

It can be useful to study $\pL f(q)$, $N_S^L(q)$, and $T_S^C(q)$ in local coordinates. In this direction we mention \cite[Theorem 4.1]{ledyaevzhu2007}:
\begin{theorem}
 \label{thm:subgrad-invariance}
 Let $f : \R^d \rightarrow \R \cup \left\{ \infty\right\}$ be lower semicontinuous, $\cO \subset M$ an open set, and $\varphi : \cO \rightarrow \R^d$ a $C^1$-smooth diffeomorphism. If $x\colonequals \varphi(q)$ then
  \begin{equation}
  \label{eq:subgrad-invariance}
    \pL (f \circ \varphi)(q) = \varphi^* \pL f(x).
  \end{equation}
\end{theorem}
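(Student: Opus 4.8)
The plan is to reduce everything to the corresponding invariance statement for the Fréchet subdifferential and then push the result through the sequential definition of $\pL$. First I would establish the Fréchet analogue
\[
  \pF(f\circ\varphi)(q) = \varphi^*\,\pF f(x), \qquad x = \varphi(q).
\]
Given $\tilde p \in \pF f(x)$, Definition \ref{defn:frechet} furnishes a $C^1$ function $h$ near $x$ with $dh(x)=\tilde p$ and $f-h$ attaining a local minimum at $x$. Setting $g \colonequals h\circ\varphi$, the chain rule gives $dg(q) = D\varphi(q)^*\,dh(x) = \varphi^*\tilde p$; moreover $g$ is $C^1$ because $\varphi$ is $C^1$, and since $\varphi$ is a homeomorphism the function $(f\circ\varphi)-g = (f-h)\circ\varphi$ inherits the local minimum at $q$. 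Hence $\varphi^*\tilde p \in \pF(f\circ\varphi)(q)$, which proves $\varphi^*\pF f(x)\subseteq \pF(f\circ\varphi)(q)$. Because $\varphi^{-1}$ is also a $C^1$-diffeomorphism and $\varphi^*$ is inverted by $(\varphi^{-1})^*$, applying the same construction to $f\circ\varphi$ with $\varphi^{-1}$ in place of $\varphi$ yields the reverse inclusion, so the Fréchet identity holds with equality.

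Next I would transfer this to $\pL$ using Definition \ref{defn:limiting}. Suppose $p\in\pL(f\circ\varphi)(q)$, witnessed by $q_k\to q$ and $p_k\in\pF(f\circ\varphi)(q_k)$ with $(f\circ\varphi)(q_k)\to(f\circ\varphi)(q)$ and $p_k\to p$. Writing $x_k \colonequals \varphi(q_k)$, the Fréchet identity applied at each $q_k$ produces $\tilde p_k\in\pF f(x_k)$ with $p_k = D\varphi(q_k)^*\,\tilde p_k$, equivalently $\tilde p_k = \bigl(D\varphi(q_k)^*\bigr)^{-1} p_k$. Continuity of $\varphi$ gives $x_k\to x$ and $f(x_k)=(f\circ\varphi)(q_k)\to f(x)$; and since $\varphi$ is $C^1$ with everywhere-invertible derivative, $D\varphi(q_k)\to D\varphi(q)$ and the inverse adjoints converge as well. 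Hence $\tilde p_k\to \bigl(D\varphi(q)^*\bigr)^{-1}p =: \tilde p$, and Definition \ref{defn:limiting} shows $\tilde p\in\pL f(x)$ with $p = \varphi^*\tilde p$. This gives $\pL(f\circ\varphi)(q)\subseteq\varphi^*\pL f(x)$, and the reverse inclusion follows by repeating the argument with $\varphi^{-1}$, yielding \eqref{eq:subgrad-invariance}.

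The step I expect to be the main obstacle is the limiting passage, precisely because the pullback $\varphi^*$ is base-point dependent: the covectors $p_k$ live in the varying fibers $T^*_{q_k}M$ and must be transported by the point-dependent linear maps $D\varphi(q_k)^*$. Making the convergence $\tilde p_k\to\tilde p$ rigorous requires the continuity of $q\mapsto D\varphi(q)$ together with continuity of inversion on the open set of invertible linear maps; this is exactly where the $C^1$-hypothesis on $\varphi$ is indispensable, since under mere continuity the Fréchet subgradients would not transform by a continuous family of isomorphisms and the sequential limit could fail to land in $\varphi^*\pL f(x)$. I would carry out this verification in a fixed coordinate chart about $q$, so that the statements ``$p_k\to p$'' and ``$\tilde p_k\to\tilde p$'' have unambiguous meaning as convergence of covector components over $q_k\to q$, after which the identity \eqref{eq:subgrad-invariance} is immediate.
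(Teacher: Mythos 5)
The paper does not actually prove this statement: it is quoted directly as Theorem 4.1 of Ledyaev--Zhu \cite{ledyaevzhu2007}, so there is no in-text argument to compare yours against. Judged on its own, your two-step argument --- first the invariance of the Fr\'echet subdifferential under a $C^1$ diffeomorphism, then the passage to $\pL$ via its sequential definition --- is correct and is essentially the standard proof of this chain rule. The Fr\'echet step is sound: composing the supporting function $h$ with $\varphi$ transfers both the differential (by the chain rule) and the local minimum (since $\varphi$ is a homeomorphism), and the reverse inclusion does follow by running the construction with $\varphi^{-1}$, because $(\varphi^{-1})^*$ inverts $\varphi^*$ fiberwise. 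The limiting step is also sound, and you correctly identify the one genuinely delicate point: the covectors $p_k$ live over moving base points, so the convergence $\tilde{p}_k = \bigl(D\varphi(q_k)^*\bigr)^{-1}p_k \to \bigl(D\varphi(q)^*\bigr)^{-1}p$ must be read in a fixed chart and uses continuity of $q \mapsto D\varphi(q)$ together with continuity of inversion on invertible linear maps. The only loose end is cosmetic: Definition \ref{defn:frechet} asks for a $C^1$ function $g$ defined on all of $M$ (respectively on all of $\R^d$), whereas $h\circ\varphi$ is defined only on $\cO$; since only the germ at $q$ matters, multiply by a smooth bump function equal to $1$ near $q$ and supported in $\cO$, which preserves both $dg(q)$ and the local minimum. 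With that remark added, your proof is a complete, self-contained substitute for the citation.
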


If we are given a closed set $S \subset M$, a point $q \in S$, and a coordinate chart $\varphi : M \rightarrow \R^d$ whose domain $\cO$ includes $q$ then we can write $x \colonequals \varphi(q)$ and obtain from \eqref{eq:subgrad-invariance} the useful formula
\begin{equation}
\label{eq:local-normal}
  N_S^L(q) = \varphi^* N_{\varphi(S \cap \cO)}^L(x).
\end{equation}
 Although in general $\varphi(S \cap \cO) \subset \R^d$ may not be a closed set, Definition \ref{defn:limiting-normal} continues to make sense in \eqref{eq:local-normal} because the function $\chi_{\varphi(S \cap \cO)}$ is lower semicontinuous on a neighborhood of $x$.

Likewise, the reader may wish to check the dual formula
\begin{equation}
  \label{eq:local-tangent}
  \varphi_* T_S^C(q) = T_{\varphi(S \cap \cO)}^C(x),
\end{equation}
which is itself a consequence of \eqref{eq:local-normal} and Definition \ref{defn:clarke-tangent}. As a consequence of \eqref{eq:local-tangent}, the Clarke tangent cone retains on manifolds many of the attractive features developed in \cite{clarkeand1998} for Banach spaces.

Finally, we will have need for the following theorem, which is a geometric version of a result due to Subbotin \cite{subbotin1991property}:
\begin{theorem}
\label{thm:subbotin}
  Let $V \subset T_qM$ be a compact, convex set and $f : M \rightarrow \R$ locally Lipschitz. For any
  \begin{equation*}
    \rho \le \inf_{v \in V} \underline{D}f(x;v)
  \end{equation*}
  there exists $p \in \pL f(x)$ such that
  \begin{equation*}
    \rho \le \inf_{v \in V} \left< p,v\right>.
  \end{equation*}
\end{theorem}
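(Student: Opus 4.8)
The plan is to reduce to the Euclidean case, where the statement is precisely the result of Subbotin that this theorem generalizes, and then transport the conclusion back to $M$ using the coordinate invariance recorded in \eqref{eq:subgrad-invariance}. Fix a coordinate chart $\varphi : \cO \to \R^d$ with $x \in \cO$, write $y \colonequals \varphi(x)$, and set $\tilde{f} \colonequals f \circ \varphi^{-1}$, a locally Lipschitz (hence lower semicontinuous) function on the open set $\varphi(\cO) \subset \R^d$. Since $\varphi_* = D\varphi(x)$ is a linear isomorphism $T_x M \to T_y \R^d \cong \R^d$, the image $\tilde{V} \colonequals \varphi_* V$ is again compact and convex. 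The entire content of the argument is that both the hypothesis and the conclusion are expressed through coordinate-invariant objects, so that the Euclidean theorem applied to $\tilde{f}$ and $\tilde{V}$ delivers what we want after a single pullback.

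First I would check that the lower Dini derivative is carried to the Euclidean one by the pushforward, namely $\underline{D} f(x; v) = \underline{D}\tilde{f}(y; \varphi_* v)$ for every $v \in V$. Because the definition of $\underline{D} f(x;v)$ permits \emph{any} smooth curve with initial velocity $v$, I choose the convenient curve $c_v(\lambda) = \varphi^{-1}(y + \lambda\, \varphi_* v)$; its image under $\varphi$ is the straight ray $\lambda \mapsto y + \lambda\, \varphi_* v$, and $f(c_v(\lambda)) = \tilde{f}(y + \lambda\, \varphi_* v)$, so the defining $\liminf$ coincides termwise with the Euclidean lower Dini derivative of $\tilde{f}$ at $y$ in the direction $\varphi_* v$. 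Since $\varphi_*$ maps $V$ bijectively onto $\tilde{V}$, this gives
\begin{equation*}
  \inf_{v \in V} \underline{D} f(x;v) = \inf_{w \in \tilde{V}} \underline{D} \tilde{f}(y; w),
\end{equation*}
and the hypothesis $\rho \le \inf_{v \in V}\underline{D} f(x;v)$ becomes $\rho \le \inf_{w \in \tilde{V}}\underline{D}\tilde{f}(y;w)$.

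With the problem now posed on $\R^d$ for the compact convex set $\tilde{V}$, I would invoke Subbotin's theorem in its Euclidean form (the $M = \R^d$ instance, which is the cited result \cite{subbotin1991property}): there exists $\tilde{p} \in \pL \tilde{f}(y)$ with $\langle \tilde{p}, w\rangle \ge \rho$ for every $w \in \tilde{V}$. Applying \eqref{eq:subgrad-invariance} with the function there taken to be $\tilde{f}$ gives $\pL f(x) = \pL(\tilde{f} \circ \varphi)(x) = \varphi^* \pL \tilde{f}(y)$, so that $p \colonequals \varphi^* \tilde{p} = D\varphi(x)^* \tilde{p}$ lies in $\pL f(x)$. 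For any $v \in V$, writing $w = \varphi_* v \in \tilde{V}$, the pullback identity yields $\langle p, v\rangle = \langle D\varphi(x)^* \tilde{p}, v\rangle = \langle \tilde{p}, \varphi_* v\rangle = \langle \tilde{p}, w\rangle \ge \rho$; taking the infimum over $v \in V$ gives $\rho \le \inf_{v \in V}\langle p, v\rangle$, as required.

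The only genuinely delicate point is the first step: confirming that the lower Dini derivative transforms correctly and that the straight-line Euclidean derivative appearing in Subbotin's result matches the curve-based manifold definition. This rests on the independence of $\underline{D}f(x;v)$ from the representing curve for locally Lipschitz $f$ — exactly the freedom built into the definition, which licenses the straightening curve $c_v$. Everything else is formal: preservation of compactness and convexity under the linear isomorphism $\varphi_*$, and the transfer of the subdifferential inequality, follow at once from the invariance formula \eqref{eq:subgrad-invariance} and the pairing identity $\langle \varphi^* \tilde{p}, v\rangle = \langle \tilde{p}, \varphi_* v\rangle$.
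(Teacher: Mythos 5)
Your proposal is correct and follows essentially the same route as the paper: the paper's own proof is a two-line sketch that invokes the Euclidean version of Subbotin's result (via Theorem 3.4.2 and Proposition 3.4.5 of \cite{clarkeand1998}) and then transfers it to the manifold using the local coordinate formula of Theorem \ref{thm:subgrad-invariance}, which is exactly the reduction you carry out. Your write-up simply supplies the details the paper omits, in particular the verification that the lower Dini derivative is preserved under the straightening chart and that compactness and convexity of $V$ survive the pushforward.
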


\begin{proof}
  A version of this Theorem for $M = \R^d$ can be obtained by combining Theorem 3.4.2 and Proposition 3.4.5 in \cite{clarkeand1998}. The general manifold case can then be obtained using the local coordinate formula of Theorem \ref{thm:subgrad-invariance}.
\end{proof}

\section{Necessary Conditions for Approximate Critical Points}

We turn now to the first result of this paper, a necessary condition for control $\s{u}$ to be approximately critical in the following sense:

\begin{definition}
\label{defn:critical}
  Control $\s{u}$ is \emph{$\Delta$-critical} ($\Delta \ge 0$) for function $J : \cU \rightarrow \R$ if for any $\s{v} \in T_{\cU}^C(\s{u})$ there holds
\begin{equation}
\label{eq:defn-critical}
 - \Delta \left\|\s{v} \right\|_g \le  \underline{D}J(\s{u}; \s{v}).
\end{equation}
In the case where $\Delta = 0$ we simply say that $\s{u}$ is \emph{critical}.
\end{definition}

We mention that in the theorem following \eqref{eq:endpoint-condition} is in analogy with the transversality condition of the classical maximum principle; \eqref{eq:discrete-adjoint} with the adjoint equations; and \eqref{eq:discrete-amp} with the maximum principle and indeed our proof uses techniques related to those in \cite{kipka-ledyaev-2014a}. These connections are made stronger in later sections.

\begin{theorem}
\label{thm:dmp}
If $\s{u} \in \cU$ is $\Delta$-critical for $J$ then, under the standing assumptions, there exist sequences $\s{a} = (a_i)_{i = 1}^n$ and $\s{b} = (b_i)_{i = 0}^{n-1}$ with $a_n \in \pL \ell(q_n)$, $b_0 \in \pu L_0(q_0, u_0)$, and $(a_i, b_i) \in \pL L_i(q_i, u_i)$ which determine a sequence of costates $p_i \in T_{q_i}^*Q$ for $1 \le i \le n$ through
\begin{align}
\label{eq:endpoint-condition}
-p_n & = a_n \in \pL \ell(q_n)\\
\label{eq:discrete-adjoint}
p_{i-1} &= -a_{i-1} + D_q F_{i-1}(q_{i-1}, u_{i-1})^* p_i \hspace{10pt} (2 \le i \le n)
\end{align}
satisfying for all $v \in T_{\U_i}^C(u_i)$
\begin{equation}
\label{eq:discrete-amp}
 - \Delta \left\|v\right\|_g \le \left<b_i - D_u F_i(q_i,u_i)^* p_{i+1}, v \right> \hspace{10pt} (0 \le i \le n-1).
\end{equation}
\end{theorem}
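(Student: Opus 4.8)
The plan is to reduce the whole statement to a \emph{single} application of Theorem~\ref{thm:subbotin} to $J$, followed by the nonsmooth sum and chain rules; applying the sum rule to $J$ as a whole is what guarantees that the extracted stagewise subgradients $a_i,b_i$ are mutually consistent, while the chain rule is what manufactures the costate recursion. I would first regard $J$ as defined on the full product $\prod_{i=0}^{n-1}U_i$, since the update rule \eqref{eq:update-rule} and the costs are defined with or without the constraints, and record two preliminaries: because each $F_i$ is $C^1$, the state maps $\cF_i\colon\s{u}\mapsto q_i$ are $C^1$, so $J=\ell\circ\cF_n+\sum_{i}L_i\circ(\cF_i,\pi_i)$ is locally Lipschitz and Theorem~\ref{thm:subbotin} applies; and because $\cU=\prod_i\U_i$ is a product, the product rule for limiting normal cones gives $T_{\cU}^C(\s{u})=\prod_i T_{\U_i}^C(u_i)$, a closed convex cone.

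First I would convert $\Delta$-criticality into a subgradient. Take $V=\{\s{v}\in T_{\cU}^C(\s{u}):\|\s{v}\|_g\le1\}$, which is compact and convex. For $\s{v}\in V$ the definition of $\Delta$-critical gives $\underline{D}J(\s{u};\s{v})\ge-\Delta\|\s{v}\|_g\ge-\Delta$, so $\rho=-\Delta$ meets the hypothesis of Theorem~\ref{thm:subbotin}, which then yields a $p\in\pL J(\s{u})$ with $\langle p,\s{v}\rangle\ge-\Delta$ on $V$. Since $T_{\cU}^C(\s{u})$ is a cone, rescaling $\s{v}\mapsto\s{v}/\|\s{v}\|_g$ upgrades this to
\[
  \langle p,\s{v}\rangle\ge-\Delta\|\s{v}\|_g\qquad\text{for every }\s{v}\in T_{\cU}^C(\s{u}),
\]
and, the cone being a product, restricting to directions supported in a single coordinate will give \eqref{eq:discrete-amp} once $p$ has been decomposed.

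It remains to decompose this one covector. The sum rule (legitimate since every summand is Lipschitz) together with the chain rule for the $C^1$ maps $\cF_n$ and $(\cF_i,\pi_i)$ produces subgradients $a_n\in\pL\ell(q_n)$, $(a_i,b_i)\in\pL L_i(q_i,u_i)$ for $1\le i\le n-1$, and $b_0\in\pu L_0(q_0,u_0)$ such that $p=D\cF_n(\s{u})^*a_n+\sum_i D(\cF_i,\pi_i)(\s{u})^*(a_i,b_i)$; only the partial subgradient survives at $i=0$ precisely because $\cF_0\equiv q_0$ is constant. Writing $A_i=D_qF_i(q_i,u_i)$ and $B_i=D_uF_i(q_i,u_i)$, the linearized update $w_i=A_{i-1}w_{i-1}+B_{i-1}v_{i-1}$ (with $w_0=0$) makes the pushforwards explicit, and an Abel-summation (adjoint) rearrangement identifies the $j$-th coordinate of $p$ with $b_j-B_j^*p_{j+1}$, where $p_i:=-\sum_{k=i}^n(A_i^*\cdots A_{k-1}^*)a_k$ is exactly the solution of \eqref{eq:endpoint-condition}--\eqref{eq:discrete-adjoint}. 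Restricting the displayed inequality to coordinate $j$ then yields \eqref{eq:discrete-amp}.

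The step I expect to be the main obstacle is making the sum and chain rules rigorous on manifolds and in the correct upper-inclusion direction $\pL(f\circ F)(x)\subseteq DF(x)^*\pL f(F(x))$. My route would be to pass to local coordinates via Theorem~\ref{thm:subgrad-invariance} and the formulas \eqref{eq:local-normal}--\eqref{eq:local-tangent}, reducing every subdifferential to a Euclidean one, and then to invoke Mordukhovich's calculus: the upper chain rule through a $C^1$ map holds because the relevant qualification condition is automatic when the outer function is Lipschitz (its singular subdifferential is trivial), and the sum rule holds for the same reason. By comparison, the remaining tasks---the product formula for $T_{\cU}^C(\s{u})$ and checking that the adjoint bookkeeping reproduces \eqref{eq:endpoint-condition}--\eqref{eq:discrete-adjoint}---are routine once the calculus is in place.
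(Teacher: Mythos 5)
Your proposal is correct and reaches the stated conclusion, but it splits the argument differently from the paper. The shared skeleton is Subbotin's theorem (Theorem \ref{thm:subbotin}), the linearized state formula $q_j^\prime(0)=\sum_{i<j}\cF_{i+1,j}D_uF_i(q_i,u_i)v_i$, the product structure of $T_{\cU}^C(\s{u})$, and the adjoint bookkeeping $p_i=-\sum_{k\ge i}\cF_{i,k}^*a_k$. The divergence is where the single subgradient is produced and how it is decomposed: the paper lifts to the enlarged manifold $M=U_0\times\cdots\times U_{n-1}\times Q^n$ and applies Subbotin there to the \emph{separable} function $\mathcal{J}(\s{c},\s{r})=\ell(r_n)+L_0(q_0,c_0)+\sum_i L_i(r_i,c_i)$ over the compact convex set $V$ of tangent vectors compatible with the linearized dynamics, so that the resulting subgradient splits coordinatewise into the $a_i,b_i$ with no calculus beyond the definition of $\pL$; you instead apply Subbotin to $J$ itself on $\prod_i U_i$ and then decompose $p\in\pL J(\s{u})$ via the Mordukhovich sum rule and the upper chain rule through the $C^1$ state maps $\cF_i$, both of which are indeed available (the qualification conditions are automatic for Lipschitz outer functions) and transfer to manifolds via Theorem \ref{thm:subgrad-invariance}. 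Your route avoids the enlarged manifold and makes the costate appear as the literal adjoint of the chain rule, at the cost of importing calculus that the paper's lifting trick deliberately sidesteps. One detail to watch: at stage $0$ you must apply the chain rule to $\bigl(u\mapsto L_0(q_0,u)\bigr)\circ\pi_0$ rather than to $L_0\circ(\cF_0,\pi_0)$ with the joint subdifferential, since the $u$-component of an element of $\pL L_0(q_0,u_0)$ need not belong to $\pu L_0(q_0,u_0)$; your remark that only the partial subgradient survives because $\cF_0$ is constant indicates you intend the former, which is what the conclusion $b_0\in\pu L_0(q_0,u_0)$ requires.
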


\begin{proof}
With $\s{u}$, $\Delta$, and $J$ as above we denote by $\s{q}$ the state sequence associated with $\s{u}$. Fix a vector $\s{v} \in T_{\cU}^C(\s{u})$, let $\s{u}(\lambda)$ denote a smooth curve taking values in $\prod_{i = 0}^{n-1} U_i$ and satisfying $\s{u}^\prime(0) = \s{v}$, and write $\s{q}(\lambda)$ for the corresponding state sequence.

We introduce, for $0 \le i < j \le n$, mappings $\cF_{i,j} : T_{q_i}M \rightarrow T_{q_j}M$ defined by
\begin{equation*}
  \cF_{i,j} \colonequals D_qF_{j-1}(q_{j-1}, u_{j-1}) \circ \dots \circ D_q F_{i}(q_{i}, u_{i})
\end{equation*}
 We let $\cF_{i,i}$ denote the identity map on $T_{q_i}M$ so that $\cF_{i,j}$ is defined for $0 \le i \le j \le n$.

 \begin{lemma}
 With $q_j(\lambda)$ and $\s{v} = (v_i)_{i = 0}^{n-1} \in T_{\cU}^C(\s{u})$ defined as above there holds
 \begin{equation}
\label{eq:flow-derivative}
q_j^\prime(0) = \sum_{i = 0}^{j-1} \cF_{i+1,j} D_uF_i(q_i,u_i)v_i.
\end{equation}
\end{lemma}

\begin{proof}
We leave it to the reader to check that \eqref{eq:flow-derivative} holds for $j = 0$ and $j = 1$ and we proceed to prove the formula for $j > 1$ through induction.
Fix $j > 1$ and suppose that \eqref{eq:flow-derivative} holds for $j-1$. Then
\begin{equation*}
\begin{aligned}
q_j^\prime(0)  & = \left.  \frac{d }{d \lambda} \right|_{\lambda = 0} F_{j-1}(q_{j-1}(\lambda),u_{j-1}(\lambda)) \\
& = D_q F_{j-1}(q_{j-1}, u_{j-1}) q_{j-1}^\prime(0)  + D_u F_{j-1}(q_{j-1}, u_{j-1})v_{j-1}.
\end{aligned}
\end{equation*}
From the induction hypotheses there now follows
\begin{equation}
\label{eq:variational-step}
\begin{aligned}
 q_j^\prime(0) & = D_q F_{j-1}(q_{j-1}, u_{j-1})  \sum_{i = 0}^{j-2} \cF_{i+1,j-1} D_uF_i(q_i,u_i)v_i  + D_u F_{j-1}(q_{j-1}, u_{j-1})v_{j-1} \\
& = \sum_{i = 0}^{j-2} \cF_{j-1,j}\cF_{i+1,j-1} D_uF_i(q_i,u_i)v_i +\cF_{j,j} D_u F_{j-1}(q_{j-1}, u_{j-1})v_{j-1}.
\end{aligned}
\end{equation}
Using the definition one may check that the maps $\cF_{i,j}$ satisfy the following semigroup law:
\begin{equation}
  \label{eq:forward-semigroup}
  \cF_{j,k} \circ \cF_{i,j} = \cF_{i,k} \hspace{10pt} (0 \le i \le j \le k \le n).
\end{equation}
Thus \eqref{eq:variational-step} can be simplified to
\begin{equation*}
 q_j^\prime(0) = \sum_{i = 0}^{j-2} \cF_{i+1,j} D_uF_i(q_i,u_i)v_i +\cF_{j,j} D_u F_{j-1}(q_{j-1}, u_{j-1})v_{j-1}
\end{equation*}
and this is the same as \eqref{eq:flow-derivative}.
\end{proof}

We now consider the manifold
\begin{equation*}
  M \colonequals U_0 \times \dots \times U_{n-1} \times \underbrace{Q \times \dots \times Q}_{n\text{-}\mathrm{copies}}
\end{equation*}
and we denote elements of $M$ through $(\s{c}, \s{r}) = (c_0,\dots, c_{n-1}, r_1, \dots , r_n)$ in order to match indices on controls and states. Let $\mathcal{J} : M \rightarrow \R$ be the function
\begin{equation*}
  \mathcal{J}(\s{c}, \s{r}) = \ell(r_n) + L_0(q_0, c_0) + \sum_{i = 1}^{n-1} L_i(r_i,c_i)
\end{equation*}
and let $V \subseteq T_{(\s{u},\s{q})}M$ denote the set
\begin{equation*}
V\colonequals\left\{ \left(v_0, \dots, v_{n-1}, q_1^\prime(0), \dots ,q_n^\prime(0) \right) \, : \, \s{v} \in T_{\cU}^C(\s{u}), \; \left\|\s{v}\right\|_g \le 1\right\},
\end{equation*}
where $(q_i(\lambda))_{i = 1}^n$ is the variation of the state $\s{q}$ corresponding to $\s{v}$. We write $(\s{v},\s{w})$ for any such vector and index $(\s{v},\s{w})$ as $(v_0,\dots, v_{n-1}, w_1, \dots, w_n)$.

Because $\mathcal{J}$ is locally Lipschitz, \eqref{eq:defn-critical} implies that
\begin{equation*}
- \Delta \le \inf_{(\s{v},\s{w}) \in V} \underline{D}\mathcal{J}(\s{u}, \s{q} ;\s{v}, \s{w})
\end{equation*}
and this formula holds for any $(\s{v}, \s{w}) \in V$.

Moreover, $T_{\cU}^C(\s{u})$ is closed and convex and so formula \eqref{eq:flow-derivative} with the constraint $\left\|\s{v}\right\|_g \le 1$ imply that $V$ is both compact and convex.
Applying Theorem \ref{thm:subbotin} we obtain $(\s{b}, \s{a}) \in \pL \mathcal{J}(\s{u}, \s{q})$ for which
\begin{equation*}
 -\Delta \le \inf_{(\s{v},\s{w}) \in V} \left<(\s{b}, \s{a}), (\s{v},\s{w}) \right>.
\end{equation*}
Indexing $(\s{b}, \s{a})$ as $(b_0, \dots, b_{n-1}, a_1, \dots, a_n)$ we check that $b_0 \in \pu L_0(q_0, u_0)$, $a_n \in \pL \ell(q_n)$, $(a_i, b_i) \in \pL L_i(q_i, u_i)$ for $1 \le i \le n-1$, and for $\s{v} \in T_{\cU}^C(\s{u})$ with $\left\|\s{v}\right\|_g \le 1$
\begin{equation*}
-\Delta \le \sum_{i = 0}^{n-1} \left<b_i, v_i \right> + \sum_{j = 1}^n \left< a_j, q_j^\prime(0) \right>.
\end{equation*}
Using \eqref{eq:flow-derivative} we write this as
\begin{equation}
\label{eq:unit-vector-step-pmp}
-\Delta \le \sum_{i = 0}^{n-1} \left<b_i, v_i \right> + \sum_{j = 1}^n  \sum_{i = 0}^{j-1}  \left< a_j,  \cF_{i+1,j}D_u F_i(q_i,u_i)v_i \right>.
\end{equation}

Since \eqref{eq:unit-vector-step-pmp} holds for all unit vectors in the cone $T_{\cU}^C(\s{u})$, the inequality
\begin{equation*}
-\Delta \left\|\s{v}\right\|_g \le \sum_{i = 0}^{n-1} \left<b_i, v_i \right> + \sum_{j = 1}^n  \sum_{i = 0}^{j-1}  \left< a_j,  \cF_{i+1,j}D_u F_i(q_i,u_i)v_i \right>
\end{equation*}
holds for all vectors $\s{v} \in T_{\cU}^C(\s{u})$.
Rearranging the double sum we obtain
\begin{equation*}
-\Delta \left\|\s{v}\right\|_g \le \sum_{i = 0}^{n-1} \left<b_i, v_i \right> + \sum_{i= 0}^{n-1}  \sum_{j = i+1}^{n}  \left< a_j,  \cF_{i+1,j}D_u F_i(q_i,u_i)v_i \right>
\end{equation*}
thus proving that for all $\s{v} \in T_{\cU}^C(\s{u})$
\begin{equation}
\label{eq:amess}
-\Delta \left\|\s{v}\right\|_g \le \sum_{i = 0}^{n-1} \left<b_i, v_i \right> + \sum_{i= 0}^{n-1} \left< D_u F_i(q_i,u_i)^*\sum_{j = i+1}^{n}  \cF_{i+1,j}^* a_j,  v_i \right>.
\end{equation}

We now define a sequence $(p_i)_{i =1}^{n}$ through $p_i = -\displaystyle \sum_{j = i}^n \cF_{i,j}^* a_j$ so that \eqref{eq:amess} is
\begin{equation*}
-\Delta \left\|\s{v}\right\|_g \le \sum_{i = 0}^{n-1} \left<b_i-D_u F_i(q_i,u_i)^*p_{i+1},  v_i \right>.
\end{equation*}
Recalling that $T_{\cU}^C(\s{u}) = T_{\U_0}^C(u_0) \times T_{\U_1}^C(u_1) \times \dots \times T_{\U_{n-1}}^C(u_{n-1})$, which follows from \eqref{eq:local-tangent} and \cite[page 85]{clarkeand1998},
and choosing the vector $(0, \dots, 0, v, 0, \dots, 0)$ for arbitrary $v \in T_{\U_i}^C(u_i)$ we obtain \eqref{eq:discrete-amp}.

We note that $-p_n = a_n \in \pL \ell(q_n)$ and for $1 \le i \le n$ we have $p_i \in T_{q_i}^*M$. All that remains is to check that \eqref{eq:discrete-adjoint} holds. For this we note that \eqref{eq:forward-semigroup} implies
\begin{equation}
\label{eq:semigroup}
 \cF_{i,j}^* \circ \cF_{j,k}^* = \cF_{i,k}^* \hspace{10pt} (0 \le i \le j \le k \le n)
\end{equation}
For any $2 \le i \le n$, the definition of $p_i$ and the relation \eqref{eq:semigroup} imply the equalities
\begin{align*}
  p_{i-1} &= -\sum_{j = i-1}^n \cF_{i-1,j}^* a_j = -a_{i-1} - \cF^*_{i-1,i} \sum_{j = i}^n \cF_{i, j}^* a_j \\
	&= - a_{i-1} + D_q F_{i-1}(q_{i-1}, u_{i-1})^* \left(-\sum_{j = i}^n \cF_{i, j}^* a_j \right)
\end{align*}
and this is precisely \eqref{eq:discrete-adjoint}.
\end{proof}

\subsection{Varying the Initial State}
\label{subsec:initial-variation}

In certain applications it can be useful to allow variations in the initial state, $q_0$. Since we have assumed that the sets $U_i$ are manifolds, it is interesting to study this case by considering the initial state $q_0$ to be a control. In particular, let $J : Q \times \cU \rightarrow \R$ be defined by
 \begin{equation}
 \label{eq:cost-with-initial-condition}
   J(q_0,\s{u}) = \kappa(q_0) + \ell(q_n) + \sum_{i = 0}^{n-1} L_i(q_i, u_i),
 \end{equation}
 where $\kappa : Q \rightarrow \R$. We suppose that initial state $q_0$ is required to lie a closed set $S \subset Q$ and generalize Definition \ref{defn:critical} as follows:
 \begin{definition}
   \label{defn:general-critical}
   State $q_0$ and control $\s{u} \in \cU$ are \emph{$\Delta$-critical} ($\Delta \ge 0$) for function $J$ defined by \eqref{eq:cost-with-initial-condition} if for any $w \in T_S^C(q_0)$ and $\s{v} \in T_{\cU}^C(\s{u})$ there holds
\begin{equation}
\label{eq:gen-defn-critical}
 - \Delta \left(\left\|w\right\|^2+\left\|\s{v} \right\|^2\right)^{1/2} \le  \underline{D}J(q_0,\s{u};w, \s{v}).
\end{equation}
\end{definition}

\begin{theorem}
\label{thm:dmp-initial-variation}
If $q_0 \in Q$ and $\s{u} \in \cU$ are $\Delta$-critical for $J$ in the sense of Definition \ref{defn:general-critical}, the standing assumptions hold, and $\kappa : Q \rightarrow \R$ is locally Lipschitz, then there exist sequences $\s{a} = (a_i)_{i = 1}^n$ and $\s{b} = (b_i)_{i = 0}^{n-1}$ with $a_n \in \pL \ell(q_n)$, $b_0 \in \pu L_0(q_0, u_0)$, and $(a_i, b_i) \in \pL L_i(q_i, u_i)$ which determine a sequence of costates $p_i \in T_{q_i}^*Q$ ($0 \le i \le n$) through \eqref{eq:endpoint-condition}
\begin{equation*}
p_{i-1} = -a_{i-1} + D_q F_{i-1}(q_{i-1}, u_{i-1})^* p_i \hspace{10pt} (1 \le i \le n)
\end{equation*}
satisfying \eqref{eq:discrete-amp} for all $v \in T_{\U_i}^C(u_i)$. In addition, there exists $\beta \in \pL \kappa(q_0)$ such that for all $v \in T_S^C(q_0)$
\begin{equation}
\label{eq:p0-condition}
 - \Delta \left\|v\right\| \le \left<\beta - p_{0}, v \right>.
\end{equation}
\end{theorem}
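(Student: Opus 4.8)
The plan is to mirror the proof of Theorem \ref{thm:dmp}, treating the initial state $q_0$ as an additional control constrained to $S$. First I would extend the variational lemma. Allowing a variation $q_0(\lambda)$ with $q_0^\prime(0) = w \in T_S^C(q_0)$ changes only the base case of the induction behind \eqref{eq:flow-derivative}, and the same argument yields
\[ q_j^\prime(0) = \cF_{0,j}\, w + \sum_{i=0}^{j-1} \cF_{i+1,j}\, D_u F_i(q_i,u_i) v_i, \]
so that $q_j^\prime(0)$ is again linear in the data $(w, \s{v})$.

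Next I would enlarge the ambient manifold to $M = Q \times U_0 \times \dots \times U_{n-1} \times Q^n$, with the leading factor carrying the constraint $S$, and define $\mathcal{J}(r_0, \s{c}, \s{r}) = \kappa(r_0) + \ell(r_n) + L_0(r_0, c_0) + \sum_{i=1}^{n-1} L_i(r_i, c_i)$. The compact convex set $V$ is formed exactly as before but now records the $w$-component, using the constraint $(\|w\|^2 + \|\s{v}\|^2)^{1/2} \le 1$ together with $w \in T_S^C(q_0)$ and $\s{v} \in T_\cU^C(\s{u})$; compactness and convexity follow from the linearity of the extended variational formula and the fact that $T_S^C(q_0) \times T_\cU^C(\s{u})$ is closed and convex. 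Applying Theorem \ref{thm:subbotin} as in Theorem \ref{thm:dmp} produces a limiting subgradient $(\alpha, \s{b}, \s{a}) \in \pL \mathcal{J}(q_0, \s{u}, \s{q})$.

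The one genuinely new analytic ingredient is the splitting of the $r_0$-component $\alpha$. Because $\kappa$ and $L_0$ are the only terms of $\mathcal{J}$ sharing the variable $r_0$, and both are locally Lipschitz, the limiting sum rule gives $(\alpha, b_0) \in (\pL\kappa(q_0) \times \{0\}) + \pL L_0(q_0, u_0)$; hence one may write $\alpha = \beta + a_0$ with $\beta \in \pL\kappa(q_0)$ and $(a_0, b_0) \in \pL L_0(q_0, u_0)$, isolating the transversality multiplier $\beta$ from the running-cost multiplier $a_0$ (whose $u$-component $b_0$ is the one appearing in \eqref{eq:discrete-amp}). The remaining components $a_n \in \pL\ell(q_n)$ and $(a_i, b_i) \in \pL L_i(q_i,u_i)$ for $1 \le i \le n-1$ come from the product structure exactly as before.

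Finally I would substitute the extended variational formula into the Subbotin inequality and set $p_i = -\sum_{j=i}^n \cF_{i,j}^* a_j$ for $0 \le i \le n$. The $\s{v}$-terms reproduce \eqref{eq:discrete-amp} verbatim, while the coefficient of $w$ is $\alpha + \sum_{j=1}^n \cF_{0,j}^* a_j$; using the semigroup law \eqref{eq:semigroup} to write $\cF_{0,j}^* = \cF_{0,1}^* \cF_{1,j}^* = D_q F_0(q_0,u_0)^* \cF_{1,j}^*$, this collapses to $\alpha - D_q F_0(q_0,u_0)^* p_1 = \beta - p_0$, where $p_0 = -a_0 + D_q F_0(q_0,u_0)^* p_1$ is exactly the $i=1$ instance of the adjoint recursion. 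Homogenizing and then splitting the inequality into its $w$- and $v_i$-parts (using that the Clarke tangent cone of a product is the product of the cones) yields \eqref{eq:p0-condition} and \eqref{eq:discrete-amp} simultaneously. I expect the main obstacle to be the careful bookkeeping of this $w$-term together with the sum-rule split of $\alpha$; once these are in hand, the adjoint recursion for $2 \le i \le n$ and the maximum-principle inequality follow from the same semigroup computation as in Theorem \ref{thm:dmp}.
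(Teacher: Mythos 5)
Your proof is correct, but it takes a genuinely different route from the paper. The paper does not re-run the argument of Theorem \ref{thm:dmp} at all: it reduces to that theorem by inventing a fictitious time step $i=-1$ with control manifold $U_{-1}=Q$, constraint set $S$, update map $\what{F}_{-1}(q,\wtilde{q})=\wtilde{q}$, and running cost $\what{L}_{-1}(q,\wtilde{q})=\kappa(\wtilde{q})$; then \eqref{eq:p0-condition} is literally the $i=-1$ instance of \eqref{eq:discrete-amp}, since $D_u\what{F}_{-1}$ is the identity and $\partial_{L,u}\what{L}_{-1}=\pL\kappa(q_0)$. You instead augment the product manifold of the original proof with a leading $Q$-factor, extend the variational formula to $q_j^\prime(0)=\cF_{0,j}w+\sum_{i=0}^{j-1}\cF_{i+1,j}D_uF_i(q_i,u_i)v_i$ (correct), and re-apply Theorem \ref{thm:subbotin}; your computation collapsing the $w$-coefficient to $\beta-p_0$ via $\cF_{0,j}^*=D_qF_0(q_0,u_0)^*\cF_{1,j}^*$ checks out. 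The one extra analytic ingredient you need, and correctly identify, is the limiting sum rule to split $\alpha=\beta+a_0$ with $\beta\in\pL\kappa(q_0)$ and $(a_0,b_0)\in\pL L_0(q_0,u_0)$; this is valid here since both functions are locally Lipschitz on a finite-dimensional manifold, but the paper's reduction avoids it entirely because in the augmented system $\kappa$ lives on the control coordinate $u_{-1}$ while $L_0$ lives on the state coordinate $q_0$, which are distinct coordinates of the product manifold. The trade-off: the paper's proof is a few lines of bookkeeping with no new nonsmooth calculus, while yours is self-contained and makes the transversality multiplier $\beta$ and the $i=1$ adjoint step $p_0=-a_0+D_qF_0(q_0,u_0)^*p_1$ explicit rather than implicit in a reindexing.
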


\begin{proof}
Define a collection of Riemannian manifolds $\left\{U_i\right\}_{i = -1}^{n-1}$ by setting $U_{-1} = Q$ and $U_i = U$ for $0 \le i \le n-1$ and let $\what{F}_i : Q \times U_i \rightarrow Q$ be given by
\begin{equation*}
  \what{F}_{-1}(q,\wtilde{q}) \colonequals \wtilde{q}, \, \what{F}_i(q,u) \colonequals F_i(q,u) \hspace{10pt} (0 \le i \le n-1).
\end{equation*}
Let $\what{L}_i : Q \times U_i \rightarrow \R$ be
\begin{equation*}
 \what{L}_{-1}(q,\wtilde{q}) \colonequals \kappa(\wtilde{q}), \, \what{L}_i(q,u) \colonequals L_i(q,u) \hspace{10pt} (0 \le i \le n-1).
\end{equation*}
We agree to write $q_{-1} \colonequals q_0$ and $\what{\s{w}}$ for controls $(q,\s{w}) \in S \times \cU$.
One may then check that because $(q_0, \s{u})$ is $\Delta$-critical for the cost function \eqref{eq:cost-with-initial-condition} in the sense of Definition \ref{defn:general-critical}, the control $\what{\s{u}} \colonequals (q_0, \s{u})$ is $\Delta$-critical for
\begin{equation*}
  \what{J}(\what{\s{w}}) \colonequals \ell(q_n) + \sum_{i = -1}^{n-1} \what{L}_i(q_i, w_i)
\end{equation*}
in the sense of Definition \ref{defn:critical}. The result now follows from Theorem \ref{thm:dmp}.
\end{proof}

It's interesting to note that if $T_S^C(q_0) = T_{q_0}^*Q$, for example, if $q_0$ is in the interior of $S$, then \eqref{eq:p0-condition} simplifies to $p_0 \in \pL \kappa(q_0) + \Delta \ol{B}$, where $B$ is the open unit ball in $T_{q_0}^*Q$.

\section{Applications}

As first applications of Theorem \ref{thm:dmp} we consider in this section ($i$) applications to geometric control systems in the absence of state constraints and ($ii$) applications to variational integrators. A careful study of problems for which there are pure state or mixed constraints will be undertaken below, beginning in Section \ref{sec:decrease-princ}.

\subsection{A Discrete-Time Geometric Maximum Principle}
\label{sec:discrete-gmp}

In continuous-time,\\ the Pontryagin maximum principle is used to study trajectories of control systems
\begin{equation}
\label{eq:classical-control-system}
  \dot{q}(t) = f(q(t), u(t))
\end{equation}
in which $f : Q \times U \rightarrow TQ$ is a $C^1$-smooth map satisfying $f(q,u) \in T_qQ$ for all $(q,u) \in Q \times U$ \cite{AS2004, barbero2009, kipka-ledyaev-2014a, kipka-ledyaev-2015a, sussmann1997}. In this section we undertake a brief study of discrete-time geometric control systems which the update maps $F_i$ arise through discretization of \eqref{eq:classical-control-system}. Let us suppose we are given finite interval $\left[0,T\right]$ and positive numbers $(h_i)_{i = 0}^{n-1}$ for which $\sum_{i = 0}^{n-1} h_i = T$.

If $Q$ is a Riemannian manifold, then a natural generalization of the classical scheme
$x_i = x_{i-1} + h_{i-1} f(x_{i-1}, u_{i-1})$
is given by
\begin{equation*}
  q_i \colonequals \exp_{q_{i-1}}\left(h_{i-1} f(q_{i-1}, u_{i-1})\right).
\end{equation*}
In this case the update map $F_i : Q \times U \rightarrow Q$ factors as
\begin{equation}
\label{eq:riemannian-factorization}
  \begin{tikzcd}
    Q \times U \arrow[rr,"F_i"] \arrow{dr}[swap]{f_i} & & Q \\
    & TQ \arrow{ur}[swap]{\exp}
  \end{tikzcd}
\end{equation}

More generally, we may replace $\exp : TQ \rightarrow Q$ with a smooth map $E : TQ \rightarrow Q$. For example, because the Riemannian exponential can be expensive to calculate in practice, approximate exponentials $E : TQ \rightarrow Q$ are useful in certain applications; we refer the reader to \cite{absil2007optimization}. Thus it can be desirable consider a more general scheme in which $F_i$ factors as $E \circ f_i$ for a $C^1$-smooth map $E : TQ \rightarrow Q$.

On the other hand, one would sometimes like to consider the case in which $F_i$ factors through a Lie algebra. In particular, suppose that $F_i$ factors as
\begin{equation}
\label{eq:lie-algebra-factorization}
  \begin{tikzcd}
    Q \times U \arrow[rr,"F_i"] \arrow{dr}[swap]{f_i \times \pi_1} & & Q \\
    & \g \times Q \arrow{ur}[swap]{\lambda}
  \end{tikzcd}
\end{equation}
where $\pi_1$ denotes projection onto the first coordinate and $\lambda : \g \times Q \rightarrow Q$ is a left action of a Lie algebra $\g$ on $Q$. For example, the update scheme
\begin{equation*}
  q_i \colonequals \lambda\left( h_{i-1}f(q_{i-1}, u_{i-1}), q_{i-1}\right)
\end{equation*}
arises naturally in the \emph{Runge-Kutta-Munthe-Kaas} method for numerical solution of ODEs on manifolds \cite{munthe1999high}.
We recommend \cite{iserles2000lie} for a comprehensive overview of this and related techniques.

With the above examples in mind, we consider in this section a vector bundle $\pi : X \rightarrow Q$ along with a $C^1$-smooth map $E : X \rightarrow Q$. We suppose that the update maps $F_i$ factor as
\begin{equation}
\label{eq:factorization}
  \begin{tikzcd}
    Q \times U \arrow[rr,"F_i"] \arrow{dr}[swap]{f_i} & & Q \\
    & X \arrow{ur}[swap]{E}
  \end{tikzcd}
\end{equation}
where $f_i : Q \times U \rightarrow X$ are maps satisfying $f_i(q,u) \in X_q \colonequals \pi^{-1}(q)$. This scheme includes \eqref{eq:riemannian-factorization}, \eqref{eq:lie-algebra-factorization}, and others.

Let us recall the following:
\begin{definition}
For a vector bundle $\pi : X \rightarrow Q$ and a smooth map $E : X \rightarrow M$, the \emph{fibre derivative} of $E$ at $v \in X$ is the linear map $\mathbb{F}E_v : X_{\pi(v)} \rightarrow T_{E(v)}M$ defined by
\begin{equation*}
\mathbb{F}E_v(w) \colonequals \left. \frac{d}{dt} \right|_{t = 0} E(v + t w).
\end{equation*}
\end{definition}
We introduce the following assumptions:

\textbf{Assumptions ($\mathsf{A}$):} The maps $F_i$ factor as \eqref{eq:factorization}; the control manifolds $U_i$ are subsets of $\R^{k_i}$ for natural numbers $k_0, \dots, k_{n-1}$; control sets $\U_i$ are closed and convex; mappings $f_i : Q \times \U_i \rightarrow X$ are affine in $u$ for each fixed $q$; and functions $L_i : Q \times \U_i \rightarrow \R$ are convex in $u$ for each fixed $u$.

\begin{theorem}
\label{thm:discrete-geometric-pmp}
  Suppose that control $\s{u}$ is a local minimizer for the cost function \eqref{eq:jdef} and that in addition to the standing assumptions, assumptions \textnormal{\textbf{(}}$\mathsf{A}$\textnormal{\textbf{)}} are true.
Then there are sequences $(a_i)_{i = 1}^n$ and $(b_i)_{i = 0}^{n-1}$ which satisfy $b_0 \in \pL L_0(q_0, u_0)$, $(a_i, b_i) \in \pL L_i(q_i, u_i)$ for $1 \le i \le n-1$, and $a_n \in \pL \ell(q_n)$ such that the sequence of costates defined by $p_n \colonequals - a_n$ and \eqref{eq:discrete-adjoint} satisfies the maximum principle
  \begin{equation}
  \label{eq:max-princ-fiber-version}
  H_i(u_i, p_{i+1}) = \max_{u \in \U_i} H_i(u, p_{i+1}) \hspace{10pt} (0 \le i \le n-1)
  \end{equation}
where $H_i : \U_i \times T_{q_{i+1}}^* Q \rightarrow \R$ is defined through
\begin{equation*}
  H_i(u,p) \colonequals \left<\mathbb{F}E_{f_i(q_i,u_i)}^* p, f(q_i,u) \right> - L_i(q_i,u).
\end{equation*}
\end{theorem}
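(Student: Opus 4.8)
The plan is to reduce the statement to the variational inequality already furnished by Theorem \ref{thm:dmp} and then use the convexity and affineness hypotheses in Assumptions ($\mathsf{A}$) to upgrade that first-order inequality to the global maximization \eqref{eq:max-princ-fiber-version}. First I would observe that a local minimizer of $J$ over $\cU$ is in particular $0$-critical in the sense of Definition \ref{defn:critical}: for any $\s{v} \in T_{\cU}^C(\s{u})$ the sequential characterization of the Clarke tangent cone (cf. \cite{clarkeand1998}) produces feasible perturbations of $\s{u}$ along which $J$ cannot decrease, so $\underline{D}J(\s{u};\s{v}) \ge 0$. Applying Theorem \ref{thm:dmp} with $\Delta = 0$ then yields sequences $\s{a}$, $\s{b}$ with the asserted subgradient memberships, costates $p_i$ determined by $p_n = -a_n$ and \eqref{eq:discrete-adjoint}, and the inequality $0 \le \left<b_i - D_u F_i(q_i,u_i)^* p_{i+1}, v\right>$ for every $v \in T_{\U_i}^C(u_i)$.

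Next I would rewrite $D_u F_i(q_i,u_i)$ in terms of the data of the factorization \eqref{eq:factorization}. Since $F_i = E \circ f_i$ and $f_i(q_i,\cdot)$ takes values in the linear fibre $X_{q_i}$, the chain rule gives $D_u F_i(q_i,u_i) = \mathbb{F}E_{f_i(q_i,u_i)} \circ D_u f_i(q_i,u_i)$, because the restriction of $DE_{f_i(q_i,u_i)}$ to vertical directions is exactly the fibre derivative. Affineness of $f_i(q_i,\cdot)$ then makes the first-order expansion exact: $D_u f_i(q_i,u_i)(u - u_i) = f_i(q_i,u) - f_i(q_i,u_i)$ for all $u \in \U_i$. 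Combining these two facts produces the key identity $H_i(u,p_{i+1}) - H_i(u_i,p_{i+1}) = \left<D_u F_i(q_i,u_i)^* p_{i+1}, u - u_i\right> - \big(L_i(q_i,u) - L_i(q_i,u_i)\big)$, which is the bridge between the costate inequality and the Hamiltonian.

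To close the argument I would use convexity twice. Because $\U_i$ is convex, $u - u_i \in T_{\U_i}^C(u_i)$ for every $u \in \U_i$, so taking $v = u - u_i$ in the inequality from Theorem \ref{thm:dmp} gives $\left<D_u F_i(q_i,u_i)^* p_{i+1}, u - u_i\right> \le \left<b_i, u - u_i\right>$. Provided $b_i$ is a subgradient of the convex function $u \mapsto L_i(q_i,u)$ at $u_i$, the subgradient inequality $\left<b_i, u - u_i\right> \le L_i(q_i,u) - L_i(q_i,u_i)$ then forces $H_i(u,p_{i+1}) - H_i(u_i,p_{i+1}) \le 0$ for all $u \in \U_i$, which is precisely \eqref{eq:max-princ-fiber-version}. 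The case $i = 0$ is identical, using $b_0 \in \pu L_0(q_0,u_0)$ directly.

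The main obstacle is the conditional hypothesis of the previous paragraph: $b_i$ arises only as the $u$-component of the joint limiting subgradient $(a_i,b_i) \in \pL L_i(q_i,u_i)$, so I must show it acts as a genuine convex partial subgradient in $u$. I would argue this by unwinding Definition \ref{defn:limiting}: choose Fr\'echet subgradients $(a^k,b^k) \in \pF L_i(q^k,u^k)$ with $(q^k,u^k) \to (q_i,u_i)$ and $(a^k,b^k) \to (a_i,b_i)$; restrict each to the slice $\{q^k\} \times \U_i$ to see that $b^k$ is a partial Fr\'echet subgradient, which for a function convex in $u$ coincides with a convex subgradient; record the convex subgradient inequality $\left<b^k, u - u^k\right> \le L_i(q^k,u) - L_i(q^k,u^k)$; and pass to the limit in $k$ using the joint continuity guaranteed by local Lipschitzness of $L_i$. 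Everything else is routine bookkeeping with the chain rule and the product structure of $T_{\cU}^C(\s{u})$.
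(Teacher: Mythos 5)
Your proposal is correct and follows essentially the same route as the paper's proof: apply Theorem \ref{thm:dmp} with $\Delta = 0$, use affineness of $f_i(q_i,\cdot)$ to write $D_u F_i(q_i,u_i)(u-u_i) = \mathbb{F}E_{f_i(q_i,u_i)}\left(f_i(q_i,u)-f_i(q_i,u_i)\right)$, use convexity of $\U_i$ to take $v = u - u_i$ in \eqref{eq:discrete-amp}, and use convexity of $L_i$ in $u$ to convert the resulting first-order inequality into the global maximization. Your final paragraph justifying that the $u$-component $b_i$ of the joint limiting subgradient acts as a convex subgradient is a point the paper passes over silently, and is a welcome addition rather than a deviation.
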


\begin{proof}
Since $\s{u}$ is a local minimizer for $J$, we have $\underline{D}J(\s{u};\s{v}) \ge 0$ for all $\s{v} \in T_{\cU}(\s{u})$. Applying Theorem \ref{thm:dmp} with $\Delta = 0$ we obtain everything except \eqref{eq:max-princ-fiber-version}. To prove \eqref{eq:max-princ-fiber-version} we first check that because $f$ is control affine
 \begin{align*}
D_u F_i(q,u) \left(v - u\right) &= \left. \frac{d}{dt} \right|_{t = 0} E \left( (1-t) f(q, u) + t f(q,v) \right)\\& = \mathbb{F}_{f(q,u)}E \left(f(q,v) - f(q,u)\right).
 \end{align*}

We next recall \cite[Proposition 2.5.5]{clarkeand1998} that for a closed convex set $\U_i \subset \R^k$
\begin{equation*}
  T_{\U_i}^C(u_i) = c \ell \bigcup_{t > 0} t \left(\U_i - u_i\right).
\end{equation*}
Thus inequality \eqref{eq:discrete-amp} implies
\begin{equation}
\label{eq:sufficient-condition}
0 \le  \left<b_i, u - u_i \right> - \left< \mathbb{F}_{f_i(q_i,u_i)}E^* p_{i+1}, f(q_i,u) - f(q_i,u_i) \right>
\end{equation}
Since $\U_i$ is a convex set and $L$ is convex in control, \eqref{eq:sufficient-condition} is sufficient for \eqref{eq:max-princ-fiber-version}.
\end{proof}

\subsection{Lie Groups and Variational Integrators}
\label{sec:liegroups}

We now apply Theorem \ref{thm:dmp} to the study of control problems on Lie groups by allowing controls to be group elements and using multiplication for the update maps. More precisely, suppose that we are given a Lie group $G$ with Lie algebra $\g$, let the control sets $\U$ be given by $\U = U = G$, and consider the control system
\begin{equation}
\label{eq:multiplication-lie}
  g_{i} \colonequals g_{i-1} u_{i-1}.
\end{equation}

Let us agree to identify $TG$ with $G \times \g$ through $T_gG = \left\{ DL_{g}a \, : \, a \in \g\right\}$, where $L_g : G \rightarrow G$ is the left multiplication map $L_g(h) \colonequals gh$.
Let $\mu : G \times G \rightarrow G$ denote the multiplication map and $\psi : G \rightarrow \mathrm{Aut}(G)$ the automorphism map $\psi(g)(h) \colonequals ghg^{-1}$.
Recall that the \emph{adjoint representation} of $G$ is the map $\mathrm{Ad} : G \rightarrow \mathrm{Aut}(\g)$ given by $\mathrm{Ad}(g) \colonequals D\psi(g)(e)$ and the \emph{coadjoint representation}, which is the map $\mathrm{Ad}^* : G \rightarrow \mathrm{Aut}(\g^*)$ given by $\mathrm{Ad}^*(g) \colonequals \mathrm{Ad}(g^{-1})^*$.

For ease of notation, we will write $\mathrm{Ad}^*_g\colonequals\mathrm{Ad}^*(g)$.

\begin{theorem}
\label{thm:a-lie-group-theorem}
If $\s{u}$ is critical for \eqref{eq:jdef} with $F_i$ given by \eqref{eq:multiplication-lie}, $\ell : G \rightarrow \R$ locally Lipschitz, and $L_i : G \times G \rightarrow \R$ $C^1$-smooth, then the sequence $p_i \in \g^*$ defined by
\begin{equation}
  \label{eq:definition-Lie-pi}
p_i = d_u L_{i-1}(g_{i-1}, u_{i-1}) \hspace{10pt} (1 \le i \le n)
\end{equation}
evolves according to
\begin{equation}
  \label{eq:Lie-adjoint}
p_{i+1} = \mathrm{Ad}^*_{u_i}( p_i) + \mathrm{Ad}^*_{u_i}( d_gL_i(g_i,u_i)) \hspace{10pt} (1 \le i \le n-1)
\end{equation}
and satisfies the endpoint condition
\begin{equation}
  \label{eq:Lie-endpoint}
  -p_n \in \pL \ell(g_n).
\end{equation}
\end{theorem}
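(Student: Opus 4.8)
The plan is to deduce everything from Theorem \ref{thm:dmp} with $\Delta = 0$, the real content being the translation of the abstract adjoint recursion \eqref{eq:discrete-adjoint} into the language of the (co)adjoint representation. I write $\tilde{p}_i \in T_{g_i}^*G$ for the costates furnished by Theorem \ref{thm:dmp}, reserving $p_i \in \g^*$ for the trivialized momenta of \eqref{eq:definition-Lie-pi}.

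First I would record the simplifications afforded by the hypotheses. Since $\ell$ and each $L_i$ are $C^1$, every limiting subdifferential in Theorem \ref{thm:dmp} is a singleton, so $a_i = d_g L_i(g_i,u_i)$, $b_i = d_u L_i(g_i,u_i)$, and $-\tilde{p}_n = a_n = d\ell(g_n) \in \pL \ell(g_n)$; this last equality is already the endpoint condition \eqref{eq:Lie-endpoint}. Because $\U_i = U_i = G$ is the entire manifold, the Clarke tangent cone is $T_{\U_i}^C(u_i) = T_{u_i}G$, so \eqref{eq:discrete-amp} may be applied to both $v$ and $-v$; with $\Delta = 0$ this upgrades the inequality to the equality $d_u L_i(g_i,u_i) = D_u F_i(g_i,u_i)^* \tilde{p}_{i+1}$ in $T_{u_i}^*G$.

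Next I would compute the two partial derivatives of $F_i = \mu$. Fixing the control, $F_i(\cdot,u) = R_u$ is right translation, so $D_q F_i(g_i,u_i) = DR_{u_i}(g_i)$; fixing the state, $F_i(g,\cdot) = L_g$ is left translation, so $D_u F_i(g_i,u_i) = DL_{g_i}(u_i)$. Using the stated identification $T_gG = \{DL_g a : a \in \g\}$ to trivialize all cotangent spaces to $\g^*$, the chain rule applied to $L_{g_i}\circ L_{u_i} = L_{g_{i+1}}$, combined with the equality from the previous step, shows that the left-trivialization of $\tilde{p}_{i+1}$ is exactly the momentum $p_{i+1}$ defined in \eqref{eq:definition-Lie-pi}. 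This identification is the bridge between the two costate sequences and is what makes \eqref{eq:definition-Lie-pi} a legitimate definition.

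Finally I would left-trivialize the adjoint recursion \eqref{eq:discrete-adjoint}. The only nonroutine point, and the one I expect to be the main obstacle, is the emergence of the coadjoint representation. Trivializing $DR_{u_{i-1}}(g_{i-1})^*$ requires composing with left translations on both sides, and the resulting endomorphism $DL_{g_i^{-1}}(g_i) \circ DR_{u_{i-1}}(g_{i-1}) \circ DL_{g_{i-1}}(e)$ of $\g$ is the derivative at $e$ of $h \mapsto g_i^{-1} g_{i-1}\,h\,u_{i-1}$; since $g_i^{-1}g_{i-1} = u_{i-1}^{-1}$ this is the conjugation $h \mapsto u_{i-1}^{-1} h\, u_{i-1}$, i.e. $\mathrm{Ad}(u_{i-1}^{-1})$, whose dual is the coadjoint operator $\mathrm{Ad}^*_{u_{i-1}}$. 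Substituting into \eqref{eq:discrete-adjoint} yields the trivialized backward recursion $p_{i-1} = -\,d_g L_{i-1}(g_{i-1},u_{i-1}) + \mathrm{Ad}^*_{u_{i-1}}(p_i)$, and solving this forward (using $(\mathrm{Ad}^*_g)^{-1} = \mathrm{Ad}^*_{g^{-1}}$) together with the convention $\mathrm{Ad}^*(g) = \mathrm{Ad}(g^{-1})^*$ produces \eqref{eq:Lie-adjoint} after reindexing. The delicate part throughout is bookkeeping the left-trivializations consistently and tracking this sign convention so that the coadjoint factor lands on the correct group element; once that is done, everything reduces mechanically to Theorem \ref{thm:dmp}.
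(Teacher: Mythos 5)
Your proposal is correct and follows essentially the same route as the paper's proof: apply Theorem \ref{thm:dmp} with $\Delta = 0$, left-trivialize the two partial derivatives of group multiplication (this is exactly the paper's Lemma \ref{lem:derivatives}), read off \eqref{eq:definition-Lie-pi} from the equality version of \eqref{eq:discrete-amp} on the full tangent space $T_{u_i}G$, and rewrite \eqref{eq:discrete-adjoint} through the coadjoint operator. Two small cautions: $\ell$ is only assumed locally Lipschitz, so $a_n$ is an arbitrary element of $\pL \ell(g_n)$ rather than $d\ell(g_n)$ (the endpoint condition is an inclusion for precisely this reason, though your argument survives unchanged); and your forward recursion, carried out literally with the paper's displayed convention $\mathrm{Ad}^*(g) = \mathrm{Ad}(g^{-1})^*$, lands on $\mathrm{Ad}^*_{u_i^{-1}}$ rather than the $\mathrm{Ad}^*_{u_i}$ appearing in \eqref{eq:Lie-adjoint} --- an inverse that the paper itself elides, since its intermediate equation \eqref{eq:abstract-lie-adjoint} is likewise off by $u_i \leftrightarrow u_i^{-1}$ relative to Lemma \ref{lem:derivatives}, so this reflects a notational inconsistency in the source rather than a gap in your argument.
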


\begin{proof}
In order to apply Theorem \ref{thm:dmp} we remind the reader of the following lemma:
\begin{lemma}
\label{lem:derivatives}
  As maps from $\g$ to $\g$, $D_1 \mu(g,h) = \mathrm{Ad}_{h^{-1}}$ and $D_2 \mu(g,h) = Id_{\g}$.
\end{lemma}
\begin{proof}
  Note that $\left. \frac{d}{dt}\right|_{t = 0} g \exp(ta)h = \left. \frac{d}{dt}\right|_{t = 0} gh h^{-1} \exp(ta) h = DL_{gh} \mathrm{Ad}_{h^{-1}}(a)$. The proof for $Id_{\g}$ is similar.
\end{proof}

Thus from Theorem \ref{thm:dmp} and Lemma \ref{lem:derivatives} we obtain a sequence $(p_i)_{i = 1}^n$ which satisfies \eqref{eq:Lie-endpoint} and
\begin{equation}
\label{eq:abstract-lie-adjoint}
  p_i = -d_g L_i(g_i,u_i) + \mathrm{Ad}_{u^{-1}_i}^*(p_{i+1}),
\end{equation}
Solving \eqref{eq:abstract-lie-adjoint} for $p_{i+1}$ we obtain \eqref{eq:Lie-adjoint}. That $p_i$ satisfies \eqref{eq:definition-Lie-pi} follows from \eqref{eq:discrete-amp}, which holds for all $v \in T_{\U}^C(u_i) = T_{u_i}G$.
\end{proof}

We point out that \eqref{eq:definition-Lie-pi} corresponds to the discrete-time Legendre transform
\begin{equation}
\label{eq:discrete-legendre}
p = \mathbb{F}^+L(g,u) \colonequals d_u L_i(g,u)
\end{equation}
defined in \cite{marsden2001}.
Indeed there is a deeper connection to the discrete-time mechanics of \cite{marsden2001}. Let us consider the particular case in which $J$ is a function of the type
\begin{equation}
\label{eq:action-sum}
  J(\s{u}) = \sum_{i = 0}^{n-1} \left\{h K(u_i) - \frac{h}{2} \varphi(g_i) - \frac{h}{2} \varphi(g_iu_i) \right\},
\end{equation}
where $K$ represents a discrete-time kinetic energy and $\varphi$ a potential.\footnote{Strictly speaking, such physical notions are not necessary for Theorem \ref{thm:variational-stepping}. However in most applications such functions correspond to discrete-time action sums (see, for example, \cite{lee2005lie}).}

\begin{theorem}
\label{thm:variational-stepping}
Suppose that $\s{u}$ is critical for a function of the form \eqref{eq:action-sum}, functions $K$ and $\varphi$ are $C^2$-smooth, and $K$ satisfies:
\begin{enumerate}[(i)]
\item As a smooth map between manifolds, $dK : G \rightarrow \g^*$ is full rank at $e \in G$;
\item $dK(e) = 0$.
\end{enumerate}
Let $p_i$ denote the discrete-time momentum $p_i \colonequals \mathbb{F}^+ L(g_i, u_i)$ and let $m_i \colonequals d\varphi(g_i) \in \g^*$.
Then for $1 \le i \le n-1$ and sufficiently small step size $h$, $(g_{i+1}, p_{i+1})$ may be obtained from $(g_i,p_i)$ by solving, first for $u_i$, then $g_{i+1}$, and finally $p_{i+1}$, the equations
\begin{align}
p_i &= h \left( \mathrm{Ad}^*_{u^{-1}_i} dK(u_i) +\frac{h}{2} m_i \right) \label{eq:moser-veselov}
 \\
g_{i+1} & = g_iu_i \nonumber \\
\label{eq:lie-adjoint} p_{i+1} &= \mathrm{Ad}^*_{u_i}( p_i) - \frac{h}{2}\mathrm{Ad}^*_{u_i}( m_i) - \frac{h}{2}m_{i+1} .
\end{align}
\end{theorem}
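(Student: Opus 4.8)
The plan is to recognize the action sum \eqref{eq:action-sum} as a cost of the form \eqref{eq:jdef} and then read the three equations off Theorem \ref{thm:a-lie-group-theorem}. Writing $g_iu_i = g_{i+1}$, the summand $hK(u_i) - \tfrac{h}{2}\varphi(g_i) - \tfrac{h}{2}\varphi(g_iu_i)$ already has the structure $L_i(g_i,u_i)$, so I would set $L_i(g,u) \colonequals hK(u) - \tfrac{h}{2}\varphi(g) - \tfrac{h}{2}\varphi(gu)$ and $\ell \equiv 0$. Since $K$ and $\varphi$ are $C^2$ the maps $L_i$ are $C^1$, the control system \eqref{eq:multiplication-lie} is in force, and criticality of $\s{u}$ for \eqref{eq:action-sum} is criticality for \eqref{eq:jdef}; thus Theorem \ref{thm:a-lie-group-theorem} applies verbatim. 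The middle equation $g_{i+1} = g_iu_i$ is then simply the update rule, and the remaining work is purely computational.

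The core computation is to evaluate the two left-trivialized partial derivatives of $L_i$ that feed \eqref{eq:definition-Lie-pi} and \eqref{eq:abstract-lie-adjoint}. Setting $m_i \colonequals d\varphi(g_i)$, differentiating $u \mapsto \varphi(g_iu)$ at $u_i$ gives $m_{i+1}$, while differentiating $g \mapsto \varphi(gu_i)$ at $g_i$ produces a coadjoint factor arising from $u_i^{-1}\exp(tv)u_i = \exp(t\,\mathrm{Ad}_{u_i^{-1}}v)$, exactly the mechanism of Lemma \ref{lem:derivatives}. I therefore expect
\begin{equation*}
d_uL_i(g_i,u_i) = h\,dK(u_i) - \tfrac{h}{2}m_{i+1}, \qquad d_gL_i(g_i,u_i) = -\tfrac{h}{2}m_i - \tfrac{h}{2}\mathrm{Ad}^*_{u_i^{-1}}m_{i+1}.
\end{equation*}
Substituting $p_{i+1} = d_uL_i(g_i,u_i)$ into the adjoint relation \eqref{eq:abstract-lie-adjoint}, the two $m_{i+1}$ contributions cancel and what survives is precisely \eqref{eq:moser-veselov}; solving the same relation forward for $p_{i+1}$ (equivalently using \eqref{eq:Lie-adjoint}) and invoking $\mathrm{Ad}^*_{u_i}\mathrm{Ad}^*_{u_i^{-1}} = \mathrm{Id}$ yields \eqref{eq:lie-adjoint}. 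This is routine but bookkeeping-heavy; the only subtle point is to keep the momentum $p_i$ consistently identified with the costate from Theorem \ref{thm:a-lie-group-theorem} through the discrete Legendre transform \eqref{eq:discrete-legendre}.

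The hard part is the final solvability claim, namely that for small $h$ equation \eqref{eq:moser-veselov} can be inverted to recover $u_i$ from $(g_i,p_i)$. I would study the smooth map $\Psi : G \to \g^*$, $\Psi(u) \colonequals \mathrm{Ad}^*_{u^{-1}}dK(u)$, and compute its differential at the identity. Because $dK(e) = 0$ by hypothesis (ii), the term differentiating the coadjoint factor drops out and $D\Psi(e) = D(dK)(e)$; hypothesis (i) makes $dK$ full rank at $e$, so (as $\dim G = \dim\g^*$) this differential is a linear isomorphism and $\Psi$ is a local diffeomorphism near $e$ with $\Psi(e) = 0$. Rescaling, I would rewrite \eqref{eq:moser-veselov} as $\Psi(u_i) + \tfrac{h}{2}m_i = \tfrac{1}{h}p_i$ and apply the inverse function theorem to the map $(u,h)\mapsto \Psi(u) + \tfrac{h}{2}m_i$ at $(e,0)$, whose $u$-derivative is the isomorphism $D(dK)(e)$; this produces a locally unique $u_i$ near $e$ for all sufficiently small $h$ and correspondingly small (order $h$) momenta $p_i$, which is exactly the regime in which the scheme operates. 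Once $u_i$ is found, $g_{i+1} = g_iu_i$ and then $p_{i+1}$ from \eqref{eq:lie-adjoint} are explicit, completing the stepping procedure.
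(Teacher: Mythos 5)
Your proposal is correct and follows essentially the same route as the paper: apply Theorem \ref{thm:a-lie-group-theorem} to the action sum, compute the left-trivialized partial derivatives of $L_i(g,u) = hK(u) - \tfrac{h}{2}\varphi(g) - \tfrac{h}{2}\varphi(gu)$, substitute the discrete Legendre transform into the adjoint recursion to obtain \eqref{eq:moser-veselov}, and invoke the rank and vanishing conditions on $dK$ at $e$ for solvability. Your treatment of the solvability step is in fact slightly more explicit than the paper's, since you differentiate $\Psi(u) = \mathrm{Ad}^*_{u^{-1}}dK(u)$ and observe that the coadjoint contribution drops out at $u = e$ because $dK(e) = 0$, whereas the paper simply asserts that $dK$ restricts to a local diffeomorphism whose image contains $0$.
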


\begin{proof}
   From Theorem \ref{thm:a-lie-group-theorem} we obtain a sequence $(p_i)_{i = 1}^n$ which evolves according to \eqref{eq:lie-adjoint}. We also obtain from \eqref{eq:definition-Lie-pi} the equation
\begin{equation*}
  p_{i+1} = h dK(u_i) - \frac{h}{2} m_{i+1}
\end{equation*}
and by plugging this into \eqref{eq:lie-adjoint} and rearranging we obtain \eqref{eq:moser-veselov}. We need only check that \eqref{eq:moser-veselov} is solvable for sufficiently small $h$.

To see this, note that by our first assumption on $K$, $dK$ restricts to a diffeomorphism on a neighborhood of $e \in G$. The image of this neighborhood includes $0$ by the second assumption and so equation \eqref{eq:moser-veselov} is uniquely solvable for sufficiently small step size $h$.
\end{proof}

We remark that for $G = SO(3)$ with
\begin{equation*}
  K(u) = \frac{1}{h} \mathrm{tr} \left( \left(I_{3 \times 3}  - u\right)J_d \right)
\end{equation*}
for $J_d$ a positive definite matrix, this scheme corresponds to the Hamiltonian equations derived in \cite{lee2005lie}. In this case \eqref{eq:moser-veselov} reduces to the famous equation of Moser and Veselov \cite{moser1991}.

\section{Exact Penalization and Sensitivity}
\label{sec:decrease-princ}

For the remainder of the paper we undertake a careful study of necessary optimality conditions for problems of discrete-time geometric optimal control. As before we consider a cost function $J : \cU \rightarrow \R$ defined by \eqref{eq:jdef} and we now suppose that the controls and the states are subject either to pure state constraints of the type $q_i \in S_i \subset Q$ or to mixed constraints of the type $(q_i, u_i) \in S_i \subset Q \times U_i$. We introduce in Section \ref{sec:mixed-constraints} a constraint qualification which corresponds to abnormality of necessary optimality conditions and we derive in that section results on sensitivity of the value function to perturbations in constraints. In Section \ref{sec:pmp-constraints} we present a discrete-time geometric maximum principle.

These results are rooted in Clarke's technique of exact penalization and in this section we prove a lemma which provides the link between abnormality of constraints and exact penalization.
Since we will make extensive use of the exact penalization technique in the remainder of this paper, we present it here and refer the reader to \cite{clarkeand1998} for further applications.

Let us recall that if $\cX$ is a metric space with metric $d_\cX$ and $S \subset \cX$ is a set then
\begin{equation*}
  d_S(x) \colonequals \inf \left\{ d_{\cX}(x,s) \, : \, s \in S\right\}.
\end{equation*}

\begin{lemma}[Exact Penalization]
Let $S \subset X$ be closed and $f : \cX \rightarrow \R$ locally Lipschitz with constant $K_f$. For any $x_* \in S$ and $\ve \ge 0$ satisfying $f(x_*) \le \inf \left\{ f(s) \, :\, s \in S\right\} + \ve$ there exists $\delta > 0$ such that for all $K > K_f$ we have
\begin{equation*}
\label{eq:exact-penalization}
  f(x_*) \le \inf \left\{ f(x) + K d_S(x)\, : \, d_{\cX}(x,x_*) \le \delta \right\}+\ve.
\end{equation*}
\end{lemma}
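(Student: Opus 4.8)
The plan is to prove the contrapositive-style statement directly by contradiction, exploiting the Lipschitz constant of $f$ to convert the penalty term $K d_S(x)$ into a genuine lower bound. First I would fix $x_* \in S$ and $\ve \ge 0$ as in the hypothesis, so that $f(x_*) \le \inf\{f(s) : s \in S\} + \ve$. Since $f$ is locally Lipschitz with constant $K_f$, there is some radius $r > 0$ on which the Lipschitz estimate $|f(x) - f(y)| \le K_f \, d_{\cX}(x,y)$ holds for all $x,y$ within distance $r$ of $x_*$. The natural choice is to take $\delta \colonequals r/2$, or some comparable fraction, precisely so that any point $x$ with $d_{\cX}(x,x_*) \le \delta$ lies in the Lipschitz neighborhood together with its nearby projections onto $S$.

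Next I would fix an arbitrary $K > K_f$ and an arbitrary $x$ with $d_{\cX}(x,x_*) \le \delta$, and aim to show $f(x_*) \le f(x) + K d_S(x) + \ve$. The key step is to control $d_S(x)$. Because $S$ is closed and $x$ is close to $x_* \in S$, the distance $d_S(x)$ is small (at most $d_{\cX}(x,x_*) \le \delta$), so one can select a point $s \in S$ with $d_{\cX}(x,s)$ arbitrarily close to $d_S(x)$, and this $s$ will still lie in the Lipschitz neighborhood of $x_*$. Applying the Lipschitz bound gives
\begin{equation*}
  f(s) \le f(x) + K_f \, d_{\cX}(x,s) < f(x) + K \, d_{\cX}(x,s).
\end{equation*}
Since $s \in S$, the choice of $x_*$ yields $f(x_*) \le f(s) + \ve$, and combining these with $d_{\cX}(x,s) \to d_S(x)$ produces $f(x_*) \le f(x) + K d_S(x) + \ve$ in the limit. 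Taking the infimum over all admissible $x$ gives the claim.

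The main obstacle, and the reason $\delta$ must be chosen carefully, is ensuring that the near-optimal projection $s \in S$ actually lies within the region where the Lipschitz estimate is valid: one needs $d_{\cX}(x_*, s) \le d_{\cX}(x_*, x) + d_{\cX}(x, s)$, and both terms on the right must be controlled by $\delta$. Choosing $\delta$ small enough relative to the Lipschitz radius $r$ handles this, though a mild subtlety is that $d_S(x)$ is an infimum that need not be attained when $S$ is merely closed in a general metric space; this is why I would work with near-minimizers $s$ satisfying $d_{\cX}(x,s) \le d_S(x) + \eta$ and pass to the limit $\eta \downarrow 0$ rather than assuming an exact projection exists. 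The strict inequality $K > K_f$ is exactly what absorbs this slack.
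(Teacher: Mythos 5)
Your proposal is correct and follows essentially the same route as the paper's proof: shrink $\delta$ so that near-minimizing points $s \in S$ with $d_{\cX}(x,s) \le d_S(x) + \eta$ stay inside the Lipschitz neighborhood of $x_*$, apply the Lipschitz bound together with $f(x_*) \le f(s) + \ve$, and let the slack $\eta \downarrow 0$. The only cosmetic differences are that the paper makes the case $x \in S$ explicit and arranges the inequalities as a lower bound on $f(x) + K d_S(x)$ rather than an upper bound on $f(x_*)$; also note that, as in the paper, it is really $K \ge K_f$ together with the limit $\eta \downarrow 0$ (not the strictness of $K > K_f$) that absorbs the slack.
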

\begin{proof}
Let $\delta > 0$ be such that for all $x_1,x_2 \in \cX$ satisfying $d_\cX(x_i,x_*) \le 3\delta$ we have $f(x_1) - f(x_2) \le K_f d_\cX(x_1,x_2)$ and let $x \in \cX$ be such that $d_\cX(x,x_*) \le \delta$. Note that if $x \in S$ then we have $f(x) \ge f(x_*) - \ve$.

If $x \not \in S$ then for any $0 < \gamma < \delta$ we may pick $s \in S$ satisfying $d_\cX(s,x) \le d_S(x) + \gamma$. The reader can check that $d_\cX(s,x_*) < 3 \delta$ and so we can write
\begin{align*}
  f(x) + Kd_S(x) & \ge f(s) - K_f d(x,s) + K d_S(x) \\
  & \ge f(x_*) - \ve - K_f d_S(x) + K d_S(x) - K_f \gamma \\
  & \ge f(x_*) - \ve - K_f \gamma.
\end{align*}
Letting $\gamma \downarrow 0$ we see that $f(x) + K d_S(x) \ge f(x_*) - \ve$ for all $x$ satisfying $d_\cX(x,x_*)\le \delta$.
\end{proof}

Thus a local minimizer or $\ve$-minimizer of $f$ over the set $S$ is a local unconstrained minimizer (or unconstrained $\ve$-minimizer) of the function $f + Kd_S$.

In the following we will also need the next result on Clarke tangent cone:
\begin{lemma}
  \label{lem:clarke-tangent}
  Let $S$ be closed and $v \in T_S^C(q)$. For any $q_n \rightarrow q$ and $t_n \downarrow 0$ and any chart $\varphi : \cO \rightarrow \R^d$ whose domain includes $q$ there exists a sequence $r_n \rightarrow q$ for which $r_n \in S$ and
  \begin{equation}
  \label{eq:sequence-characterization-tangent}
    \lim_{n \rightarrow \infty} \frac{\varphi(r_n) - \varphi(q_n)}{t_n} = \varphi_*(q) v.
  \end{equation}
\end{lemma}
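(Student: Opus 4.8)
The plan is to move the entire statement into a coordinate chart, where it reduces to the classical sequential characterization of the Clarke tangent cone, and then to pull the constructed points back to $M$. First I would invoke the dual coordinate identity \eqref{eq:local-tangent}: writing $x \colonequals \varphi(q)$, $w \colonequals \varphi_*(q) v$, and $\wtilde{S} \colonequals \varphi(S \cap \cO)$, this identity gives $w \in T_{\wtilde{S}}^C(x)$. Setting $x_n \colonequals \varphi(q_n)$ we have $x_n \to x$, and it suffices to produce points $s_n \in \wtilde{S}$ with $s_n \to x$ and $(s_n - x_n)/t_n \to w$: the preimages $r_n \colonequals \varphi^{-1}(s_n)$ then lie in $S \cap \cO \subseteq S$, converge to $q$ by continuity of $\varphi^{-1}$, and satisfy \eqref{eq:sequence-characterization-tangent} because $\varphi(r_n) = s_n$ by definition.

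For the reduced problem I would appeal to the realization (``reachability'') characterization of the Clarke tangent cone recorded in \cite{clarkeand1998}, which in finite dimensions is equivalent to the polar description of Definition \ref{defn:clarke-tangent}: a vector $w$ belongs to $T_{\wtilde{S}}^C(x)$ precisely when, along sequences approaching $x$ and step sizes $t_n \downarrow 0$, the direction $w$ is approximately realized inside $\wtilde{S}$. Applied to the data $x_n$ and $t_n$ at hand, this furnishes vectors $w_n \to w$ with $x_n + t_n w_n \in \wtilde{S}$. Taking $s_n \colonequals x_n + t_n w_n$ gives $(s_n - x_n)/t_n = w_n \to w$ immediately, while $s_n \to x$ follows from $t_n w_n \to 0$; combined with the reduction of the first paragraph, this is the lemma.

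The step I expect to require the most care --- and which I would state explicitly rather than treat as routine --- is the equivalence between Definition \ref{defn:clarke-tangent}, which defines $T_S^C$ as the polar of the limiting normal cone $N_S^L$, and the realization property used above. This is genuinely a theorem: it rests on the finite-dimensional duality $N_S^C = \ol{\mathrm{co}}\,N_S^L$ between the Clarke and limiting normal cones together with the identification of the polar of $N_S^C$ with the reachability cone. I would either cite this equivalence directly from \cite{clarkeand1998} or, to keep the argument self-contained, re-derive the realization property from the polar definition through the Clarke generalized derivative of the distance function $d_{\wtilde{S}}$, using that $w \in T_{\wtilde{S}}^C(x)$ forces $d_{\wtilde{S}}^{\,\circ}(x;w) \le 0$ and hence permits the approximate realization of $w$ by points of $\wtilde{S}$. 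The remaining bookkeeping is light: the local closedness needed to run the distance-function argument is obtained by shrinking $\cO$ to a neighborhood with compact closure (so that $\wtilde{S}$ is relatively closed there), and the transfer through $\varphi$ in the first paragraph is justified by \eqref{eq:local-normal}--\eqref{eq:local-tangent}.
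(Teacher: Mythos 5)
Your proposal is correct and follows essentially the same route as the paper's proof: transfer $v$ to coordinates via \eqref{eq:local-tangent}, invoke the sequential (realization) characterization of the Clarke tangent cone in $\R^d$ from \cite{clarkeand1998} (the paper cites Proposition 2.5.2 there) to get $w_n \rightarrow \varphi_*(q)v$ with $x_n + t_n w_n \in \varphi(S \cap \cO)$, and pull back through $\varphi^{-1}$. Your extra care about the equivalence between the polar definition and the realization property, and about local closedness of $\varphi(S \cap \cO)$, is sound but is exactly what the paper's citation is doing implicitly.
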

\begin{proof}
Let $x \colonequals \varphi(q)$ and $x_n \colonequals \varphi(q_n)$. By \eqref{eq:local-tangent} we have $\varphi_*(q)v \in T_{\varphi(\cO \cap S)}^C(x)$ and so we can use \cite[Proposition 2.5.2]{clarkeand1998}
to obtain a sequence $w_n \rightarrow \varphi_*(q)v$ for which $x_n + t_n w_n \in \varphi(\cO \cap S)$. Setting $r_n \colonequals \varphi^{-1}(x_n + t_n w_n)$ gives us \eqref{eq:sequence-characterization-tangent}.
\end{proof}

We mention the following specialization, which can be useful in the case of Riemannian manifolds:

\begin{lemma}
  \label{lem:clarke-tangent-riemannian}
  Let $S \subset M$ be a closed subset of a Riemannian manifold and $v \in T_S^C(q)$. Then for any sequence $t_n \downarrow 0$ there exists a sequence $v_n \in T_qM$ for which $v_n \rightarrow v$ and $\exp(t_n v_n) \in S$.
\end{lemma}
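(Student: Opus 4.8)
The plan is to recognize the Riemannian exponential as providing a distinguished chart — normal coordinates centered at $q$ — and then to specialize Lemma \ref{lem:clarke-tangent} to that chart with the constant sequence $q_n \equiv q$. First I would observe that $\exp_q : T_qM \to M$ is smooth (the metric is smooth) with $D\exp_q(0) = \mathrm{id}_{T_qM}$, so by the inverse function theorem it restricts to a diffeomorphism from a neighborhood of $0 \in T_qM$ onto an open set $\cO \ni q$. Note that completeness of $M$ is never needed, since short geodesics always exist and we only use $\exp_q$ near the origin. Taking $\varphi \colonequals \exp_q^{-1} : \cO \to T_qM$ as our $C^1$ chart (identifying $T_qM \cong \R^d$), we have $\varphi(q) = 0$ and $\varphi_*(q) = (D\exp_q(0))^{-1} = \mathrm{id}$.

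Next I would apply Lemma \ref{lem:clarke-tangent} with this $\varphi$, the given sequence $t_n \downarrow 0$, and the constant sequence $q_n \equiv q$. Since $v \in T_S^C(q)$, the lemma furnishes points $r_n \in S$ with $r_n \to q$ and
\[
\lim_{n \to \infty} \frac{\varphi(r_n) - \varphi(q)}{t_n} = \varphi_*(q) v = v.
\]
Because $\varphi(q) = 0$, this reads $\exp_q^{-1}(r_n)/t_n \to v$.

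Then I would set $v_n \colonequals \exp_q^{-1}(r_n)/t_n$. Since $r_n \to q$, for all $n$ beyond some index $N$ the point $r_n$ lies in $\cO$, so $v_n$ is well-defined; the display above gives $v_n \to v$ (convergence depends only on the tail), and by construction $\exp_q(t_n v_n) = \exp_q(\exp_q^{-1}(r_n)) = r_n \in S$. For the finitely many $n < N$ I would simply set $v_n \colonequals 0$, which satisfies $\exp_q(t_n v_n) = q \in S$ and does not affect the limit. This produces the required sequence.

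I anticipate the only delicate points are bookkeeping ones: confirming that $\exp_q$ is a genuine chart near the base point (an inverse-function-theorem argument that avoids any completeness hypothesis), that its differential at $q$ is the identity so the pushforward in Lemma \ref{lem:clarke-tangent} disappears, and that the finitely many initial indices with $r_n \notin \cO$ can be absorbed by the trivial choice $v_n = 0$. None of these is a real obstacle; the substance of the result already resides in Lemma \ref{lem:clarke-tangent}, and the present lemma is merely its translation into normal coordinates.
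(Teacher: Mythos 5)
Your proposal is correct and follows essentially the same route as the paper: the authors also take $\varphi^{-1} \colonequals \exp_q$ as the chart, set $q_n \equiv q$, and invoke (the proof of) Lemma \ref{lem:clarke-tangent} to produce the $v_n$. Your additional bookkeeping (the inverse function theorem near $0$, $\varphi_*(q) = \mathrm{id}$, and padding the initial indices with $v_n = 0$) only makes explicit what the paper leaves implicit.
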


\begin{proof}
In Lemma \ref{lem:clarke-tangent} we take $q_n \equiv q$ and $\varphi^{-1} \colonequals \exp$, so that $\varphi^{-1} : T_qM \cong \R^d \rightarrow M$. Repeating the same proof as above we obtain a sequence $v_n \in T_qM$ for which $\varphi^{-1}(t_nv_n) \in S$.
\end{proof}

\subsection{Decrease Condition}

Let us consider a Riemannian manifold $M$ with metric $g$ and distance function $d_g$. We suppose we are given a closed subset $\cU \subset M$, a metric space $E$ with metric $d_E$, and a nonnegative function $P : E \times M \rightarrow \R$. We are interested in the set $\cA(e) \subset \cU$ consisting of those $q \in \cU$ for which $P(e,q) = 0$.

We make the following definition:
\begin{definition}
\label{defn:strong-decrease}
  Function $P$ satisfies the \emph{strong decrease condition} for $\cU$ near $q_0 \in \cA(e_0)$ if there exist $\ve, \Delta > 0$ such that for any $(e,q) \in E \times \cU$ satisfying $P(e,q) > 0$ and
  \begin{equation*}
  d_g(q,q_0) + d_E(e,e_0) < \ve
\end{equation*}
there exists nonzero $v \in T_{\cU}^C(q)$ such that
\begin{equation*}
  \liminf_{\lambda \downarrow 0} \frac{P(e, c_v(\lambda)) - P(e,q)}{\lambda} \le - \Delta \left\|v\right\|_g,
\end{equation*}
where $c_v : \R \rightarrow M$ is any smooth curve satisfying $c_v^\prime(0) = v$.
\end{definition}

We remark that because the lower Dini derivative of Lipschitz functions is positive homogeneous in $v$, it suffices to suppose that $\left\|v\right\|_g = 1$ when $P$ is Lipschitz in $q$. Moreover, we are requiring in Definition \ref{defn:strong-decrease} only that $c_v : \R \rightarrow M$, not $c_v : \R \rightarrow \cU$. The Lemmas \ref{lem:clarke-tangent} and \ref{lem:clarke-tangent-riemannian} ensure that this is enough, as we will see below.

\begin{theorem}
\label{thm:penalty}
If $P$ is jointly continuous in $(e,q)$; Lipschtiz in $q$ for $(e,q)$ in a neighborhood of $(e_0,q_0)$; and satisfies the strong decrease condition near $q_0 \in \cA(e_0)$ then there exists a neighborhood $\cO\subset E \times \cU$ of $(e_0,q_0)$ such that for all $(e,q) \in \cO$ there holds
\begin{equation}
\label{eq:penalty}
  d_{\cA(e)}(q) \le \frac{1}{\Delta} P(e,q).
\end{equation}
In particular, the sets $\cA(e)$ are nonempty.
\end{theorem}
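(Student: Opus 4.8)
The plan is to invoke Ekeland's variational principle and use the strong decrease condition to rule out the possibility that the resulting Ekeland point lies outside $\cA(e)$. Since $P \ge 0$, the bound \eqref{eq:penalty} is trivial when $P(e,q) = 0$, so I may assume $P(e,q) > 0$ throughout. The first difficulty is that $M$ need not be complete, so Ekeland does not apply on $\cU$ directly. I would therefore fix a radius $r > 0$ small enough that the closed ball $\ol{B}(q_0,r)$ is compact, lies within the injectivity radius of its points (so that $d_g(\exp_x(w),x) = \|w\|_g$ for small $w$), lies in the region $d_g(\cdot,q_0) < \ve$ where the strong decrease condition operates, and on a neighborhood of which $P(e,\cdot)$ is Lipschitz with some constant $L$. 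Then $\cU \cap \ol{B}(q_0,r)$ is closed in a compact set, hence a complete metric space under $d_g$, on which Ekeland's principle is available.

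Next I would fix a sub-slope $\Delta' \in [\Delta/2,\Delta)$ and, using joint continuity of $P$ and $P(e_0,q_0)=0$, shrink the neighborhood $\cO$ of $(e_0,q_0)$ so that every $(e,q)\in\cO$ satisfies $d_g(q,q_0) < r/2$, $P(e,q)/\Delta' < r/2$, and $P(e,q)/\Delta' + d_g(q,q_0) + d_E(e,e_0) < \ve$. For such $(e,q)$ I apply Ekeland to $f \colonequals P(e,\cdot)$ on $\cU \cap \ol{B}(q_0,r)$ with reference point $q$, slack $P(e,q)$, and radius $\lambda = P(e,q)/\Delta'$; since $\inf f \ge 0$ the hypothesis $f(q)\le \inf f + P(e,q)$ holds and the Ekeland coefficient is exactly $\Delta'$. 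This produces $\ol{q} \in \cU \cap \ol{B}(q_0,r)$ with $P(e,\ol{q}) \le P(e,q)$, with $d_g(\ol{q},q) \le P(e,q)/\Delta'$, and minimizing $x \mapsto P(e,x) + \Delta' d_g(x,\ol{q})$ over $\cU \cap \ol{B}(q_0,r)$. The smallness of $\cO$ forces $\ol{q}$ into the interior of the ball and into the strong-decrease region.

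I then claim $P(e,\ol{q}) = 0$. Suppose not. The strong decrease condition supplies $v \in T_\cU^C(\ol{q})$, which by positive homogeneity (the remark following Definition \ref{defn:strong-decrease}) I normalize to $\|v\|_g = 1$, along which $P(e,\exp_{\ol{q}}(\cdot\,v))$ has lower Dini derivative at most $-\Delta$; I choose $t_k \downarrow 0$ realizing a value $\ell \le -\Delta$ of this liminf. Since the curve $\lambda \mapsto \exp_{\ol{q}}(\lambda v)$ may leave $\cU$, I invoke Lemma \ref{lem:clarke-tangent-riemannian} to obtain $v_k \to v$ with $r_k \colonequals \exp_{\ol{q}}(t_k v_k) \in \cU$. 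The $L$-Lipschitz bound together with smoothness of the exponential gives $|P(e,r_k) - P(e,\exp_{\ol{q}}(t_k v))| \le L\,d_g(r_k,\exp_{\ol{q}}(t_k v)) = o(t_k)$, so $\bigl(P(e,r_k)-P(e,\ol{q})\bigr)/t_k \to \ell \le -\Delta$, while $d_g(r_k,\ol{q}) = t_k\|v_k\|_g$ gives $d_g(r_k,\ol{q})/t_k \to 1$. As $r_k \to \ol{q}$ interior, eventually $r_k \in \cU \cap \ol{B}(q_0,r)$, so Ekeland minimality yields $P(e,r_k)-P(e,\ol{q}) \ge -\Delta' d_g(r_k,\ol{q})$; dividing by $t_k$ and passing to the limit forces $-\Delta \ge \ell \ge -\Delta'$, contradicting $\Delta' < \Delta$. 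Hence $P(e,\ol{q})=0$, so $\cA(e)$ is nonempty and $d_{\cA(e)}(q) \le d_g(q,\ol{q}) \le P(e,q)/\Delta'$; letting $\Delta' \uparrow \Delta$ over $[\Delta/2,\Delta)$ gives \eqref{eq:penalty}.

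The main obstacle is the mismatch between the descent direction, a Clarke tangent vector realized along a curve that may exit $\cU$, and Ekeland's minimality, which only compares $\ol{q}$ against competitors lying in $\cU$. Lemma \ref{lem:clarke-tangent-riemannian} is precisely the bridge that converts the infinitesimal decrease into honest nearby points of $\cU$, and the sub-slope device $\Delta' < \Delta$ is what upgrades the borderline inequality $-\Delta \ge -\Delta'$ into a genuine contradiction; the preliminary localization to a compact ball is what lets Ekeland run despite the possible incompleteness of $M$.
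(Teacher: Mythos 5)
Your proposal is correct and follows essentially the same route as the paper's proof: Ekeland's variational principle applied to $P(e,\cdot)$ on a localized complete subset of $\cU$, the strong decrease condition at the Ekeland point, Lemma \ref{lem:clarke-tangent-riemannian} to replace the descent curve by genuine points of $\cU$, and a strict sub-slope (your $\Delta' < \Delta$ playing the role of the paper's factor $\gamma \in (1,2)$) to turn the borderline inequality into a contradiction. The only notable difference is organizational --- you argue directly that the Ekeland point is feasible rather than assuming \eqref{eq:penalty} fails globally, and your restriction to a small compact ball gives a slightly more careful justification of completeness than the paper's ball of radius $2\ve$.
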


\begin{proof}
  Suppose that $P$ satisfies the strong decrease condition near $q_0 \in \cA(e_0)$ and fix $\ve, \Delta > 0$ as in Definition \ref{defn:strong-decrease}. Since $P(e_0,q_0) = 0$ we may choose $0 < \ve_* < \frac12 \ve$ such that for all $(e,q) \in E \times \cU$ satisfying
  \begin{equation}
  \label{eq:distance-less-than-ve-star}
      d_g(q,q_0) + d_E(e,e_0) < \ve_*
  \end{equation}
we have
 \begin{equation}
  \label{eq:P-is-small}
    \frac{4}{\Delta} P(e,q) < \frac12 \ve.
  \end{equation}

Now suppose by way of contradiction that there exists $(e_1,q_1) \in E \times \cU$ which satisfies \eqref{eq:distance-less-than-ve-star} but not \eqref{eq:penalty}. Then we may fix $1 < \gamma < 2$ such that
  \begin{equation}
  \label{eq:assume-bwoc}
      d_{\cA(e_1)}(q_1) > \frac{\gamma^2}{\Delta} P(e_1,q_1).
  \end{equation}
Inequality \eqref{eq:assume-bwoc} implies $P(e_1,q_1) > 0$. Consequently, because $\gamma$ is strictly greater than $1$ and $P$ is nonnegative we must have
 \begin{equation*}
   P(e_1,q_1) < \inf \left\{ P(e_1,q) \, : \, q \in \cU \;\; \mathrm{s.t.} \;\; d_g(q_0,q) \le 2\ve \right\} + \gamma P(e_1,q_1).
 \end{equation*}
 Applying the Ekeland variational principle to the function $q \mapsto P(e_1,q)$ defined on the complete metric space $\left\{q \in \cU \, : \, d_g(q_0,q) \le 2\ve \right\}$ with
 \begin{equation*}
   \ve = \gamma P(e_1,q_1), \, \sigma = \frac{\gamma^2}{\Delta} P(e_1,q_1)
 \end{equation*}
we obtain $q_2 \in \cU$ with
  \begin{equation}
  \label{eq:ekeland-sigma}
    d_g(q_2,q_1) < \frac{\gamma^2}{\Delta} P(e_1,q_1)
  \end{equation}
  which minimizes the function
  \begin{equation}
  \label{eq:ekeland-penalty-function}
    q \mapsto P(e_1,q) + \frac{\Delta}{\gamma} d_g(q_2,q)
  \end{equation}
  over $q \in \cU$ satisfying $d_g(q_0,q) \le 2 \ve$.

Combining inequalities \eqref{eq:assume-bwoc} and \eqref{eq:ekeland-sigma} we find
$d_g(q_2, q_1) < d_{\cA(e_1)}(q_1)$ and so $q_2 \not \in \cA(e_1)$. It follows that $P(e_1,q_2) > 0$. In addition, from \eqref{eq:distance-less-than-ve-star} and \eqref{eq:ekeland-sigma} we see
  \begin{equation*}
    d_g(q_2,q_0) + d_E(e_1,e_0) \le d_g(q_2, q_1) + d_g(q_1, q_0)+ d_E(e_1,e_0) \le \ve_* +\frac{4 }{\Delta} P(e_1,q_1).
  \end{equation*}
Thus \eqref{eq:P-is-small} implies that
$d_g(q_2,q_0) + d_E(e_1,e_0)  \le \frac12 \ve + \frac12 \ve = \ve$ and so there exists a nonzero $v \in T_{\cU}^C(q_2)$ and a smooth curve $c_v : \R\rightarrow M$ satisfying $c_v^\prime(0) = v$ and
\begin{equation*}
  \liminf_{\lambda \downarrow 0} \frac{P(e_1,c_v(\lambda)) - P(e_1, q_2)}{\lambda} \le - \Delta \left\|v\right\|_g.
\end{equation*}

We now pick a sequence $(\lambda_n)_{n = 1}^\infty$ satisfying
\begin{equation}
\label{eq:strong-decrease-e1q2}
  \lim_{n \rightarrow \infty} \frac{P(e_1,c_v(\lambda_n)) - P(e_1, q_2)}{\lambda_n} \le - \Delta \left\|v\right\|_g
\end{equation}
and apply Lemma \ref{lem:clarke-tangent-riemannian} to obtain a sequence of vectors $v_n \in T_{q_2}M$ with $v_n \rightarrow v$ and $\exp_{q_2}(\lambda_n v_n) \in \cU$. Since the map $q \mapsto P(e,q)$ is locally Lipschitz, \eqref{eq:strong-decrease-e1q2} implies
\begin{equation*}
  \lim_{n \rightarrow \infty} \frac{P(e_1,\exp_{q_2}(\lambda_n v_n)) - P(e_1, q_2)}{\lambda_n} \le - \Delta \left\|v\right\|_g.
\end{equation*}

At the same time, because $q_2$ is a local minimizer for the function \eqref{eq:ekeland-penalty-function} over $\cU$ and because $\exp_{q_2}(\lambda_n v_n) \in \cU$ we have
\begin{equation}
\label{eq:optimality}
  P(e_1, q_2) \le P(e_1, \exp_{q_2}(\lambda_n v_n)) + \frac{\Delta}{\gamma} d_g(q_2, \exp_{q_2}(\lambda_n v_n))
\end{equation}
Now use $d_g(q_2, \exp_{q_2}(\lambda_n v_n)) = \lambda_n \left\|v_n\right\|_g$ and \eqref{eq:optimality} to obtain, for sufficiently large $n$,
\begin{equation*}
  0 \le \frac{ P(e_1, \exp_{q_2}(\lambda_n v_n)) -   P(e_1, q_2)}{\lambda_n} + \frac{\Delta}{\gamma} \left\|v_n\right\|_g.
\end{equation*}

Letting $n \rightarrow \infty$ we arrive at the contradiction
\begin{equation*}
0 \le -\Delta \left\|v\right\|_g+ \frac{\Delta}{\gamma} \left\|v\right\|_g < 0,
\end{equation*}
since $\gamma > 1$. It must therefore be that \eqref{eq:penalty} holds for all $(e,q) \in E \times \cU$ satisfying \eqref{eq:distance-less-than-ve-star}.
\end{proof}

\subsection{Exact Penalization for Regular Constraints}
\label{sec:mixed-constraints}

We now apply Theorem \ref{thm:penalty} to an derive exact penalization result for constraints of the type
 \begin{equation*}
   x \in S(e) \subset M,
 \end{equation*}
 where $M$ is a Riemannian manifold and $S$ is a family of closed sets depending on a parameter $e$ in a metric space $E$. Because we are working on a Riemannian manifold it will be useful to recall the following facts:
 \begin{lemma}
\label{lem:norm-lemma}
If $a \in \pL d_S(q)$ then $\left\|a\right\|_g \le 1$. If $q \not \in S$ then $\left\|a\right\|_g = 1$ and if $q \in S$ then $a \in N_S^L(q)$.
\end{lemma}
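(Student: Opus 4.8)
The plan is to prove Lemma \ref{lem:norm-lemma} by working in local coordinates and reducing everything to known facts about the distance function on a Riemannian manifold, exploiting the subgradient-invariance formula \eqref{eq:subgrad-invariance} together with the standard Euclidean theory of the distance function from \cite{clarkeand1998}. The three assertions are: (1) any limiting subgradient $a \in \pL d_S(q)$ has norm at most $1$; (2) if $q \notin S$ then the norm is exactly $1$; and (3) if $q \in S$ then $a \in N_S^L(q)$.

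For the norm bound (1), the key observation is that $d_S$ is globally $1$-Lipschitz with respect to the Riemannian distance $d_g$, since $|d_S(q) - d_S(q')| \le d_g(q,q')$ by the triangle inequality. The hard part is translating ``$1$-Lipschitz'' into ``subgradients have norm $\le 1$'' in the Riemannian setting, because the natural pairing is between covectors and tangent vectors and the norm on $T_q^*M$ is the dual metric norm. I would first establish this for a Fr\'echet subgradient $a \in \pF d_S(q)$: by Definition \ref{defn:frechet} there is a $C^1$ function $g$ with $dg(q) = a$ and $d_S - g$ locally minimized at $q$; evaluating the lower Dini derivative along a curve $c_v$ with $c_v'(0) = v$ and using the $1$-Lipschitz bound $\underline{D} d_S(q;v) \le \|v\|_g$ gives $\langle a, v\rangle \le \|v\|_g$ for all $v \in T_qM$, whence $\|a\|_g \le 1$ by the definition of the dual norm. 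The bound then passes to limiting subgradients because it is a closed condition: if $a_n \in \pF d_S(q_n)$ with $\|a_n\|_g \le 1$ and $a_n \to a$, the metric is continuous so $\|a\|_g \le 1$.

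For (3), when $q \in S$ one has $d_S(q) = 0$ and $d_S \ge 0 = \chi_S$ pointwise near $q$ with equality at $q$ (comparing against the indicator $\chi_S$ of Definition \ref{defn:limiting-normal}); since $d_S$ minorizes $\chi_S$ and agrees at $q$, any $C^1$ function lying below $d_S$ also lies below $\chi_S$, so $\pF d_S(q) \subset \pF \chi_S(q) = N_S^F(q)$, and passing to limits gives $a \in N_S^L(q)$. The cleanest route here, and the one I would actually take, is to pass to coordinates via \eqref{eq:local-normal} and \eqref{eq:subgrad-invariance} and simply cite the Euclidean statement, \cite[Proposition 2.5.4]{clarkeand1998} or the analogous result, which records exactly these three properties of the distance function; the diffeomorphism invariance then transports each conclusion back to the manifold, with the norm statements requiring a little care since a general chart does not preserve the metric.

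The main obstacle is assertion (2): showing the norm is \emph{exactly} $1$ when $q \notin S$, rather than merely $\le 1$. This is where the Riemannian (as opposed to purely Euclidean or metric-space) structure is essential, and the coordinate-transfer argument is delicate because a chart distorts norms. The natural approach is to use a \emph{geodesic}: since $S$ is closed and $q \notin S$, along a minimizing direction the function $\lambda \mapsto d_S(\exp_q(\lambda v))$ for a unit vector $v$ pointing away from a nearest point of $S$ decreases at unit rate, forcing a subgradient of norm $1$; equivalently, one invokes Theorem \ref{thm:subbotin} with $V = \{v\}$ for a unit $v$ achieving $\underline{D} d_S(q;v) = -1$ to extract $a \in \pL d_S(q)$ with $\langle a, v\rangle \le -1$, which combined with $\|a\|_g \le 1$ and $\|v\|_g = 1$ forces $\|a\|_g = 1$. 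I expect the verification that such a unit-rate decrease direction exists — essentially the first-variation formula for the distance to a closed set along the geodesic toward a nearest point — to be the technically most demanding step, and the place where completeness or at least local geodesic convexity assumptions quietly enter.
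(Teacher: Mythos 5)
The paper itself does not prove this lemma --- it simply cites Proposition 3.5 and Corollary 7.14 of \cite{ledyaevzhu2007} --- so any self-contained argument is necessarily a different route. Your proof of the first assertion is correct: the inequality $\left<a,v\right> \le \underline{D}d_S(q;v) \le \left\|v\right\|_g$, valid for every Fr\'echet subgradient directly from Definition \ref{defn:frechet} and the $1$-Lipschitz property, gives $\left\|a\right\|_g \le 1$, and the bound is closed under the limit passage defining $\pL$. The other two assertions, however, have genuine gaps, and both gaps have the same source: an element of $\pL d_S(q)$ is by Definition \ref{defn:limiting} a limit of Fr\'echet subgradients $a_k \in \pF d_S(q_k)$ at \emph{nearby} points $q_k$, while your arguments are run only at the base point $q$.

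For the assertion $\left\|a\right\|_g = 1$ when $q \notin S$, Theorem \ref{thm:subbotin} cannot do what you ask of it: its conclusion is the lower bound $\rho \le \left<p,v\right>$, so with $V = \{v\}$ and $\rho = \underline{D}d_S(q;v) = -1$ it produces some $p$ with $\left<p,v\right> \ge -1$, which is vacuous here; it does not yield $\left<a,v\right> \le -1$, and in any case it produces only \emph{one} subgradient, whereas the lemma asserts that \emph{every} $a \in \pL d_S(q)$ has unit norm. The correct mechanism is the one you already used for the norm bound: every $a_k \in \pF d_S(q_k)$ satisfies $\left<a_k,v\right> \le \underline{D}d_S(q_k;v)$, so you must exhibit at each $q_k \notin S$ a unit vector $v_k$ with $\underline{D}d_S(q_k;v_k) \le -1$ (this is where nearest points and minimizing geodesics enter, and where your worry about completeness is legitimate --- an almost-nearest point gives a slack of order $\ve$ that does not vanish in the difference quotient, so some care is genuinely required), conclude $\left\|a_k\right\|_g \ge 1$, and pass to the limit. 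Similarly for the third assertion: the inclusion $\pF d_S(q') \subset \pF \chi_S(q')$ holds only at points $q' \in S$, but the approximating points $q_k$ for a limiting subgradient need not lie in $S$, so ``passing to limits'' is not automatic; one needs the additional standard fact that a Fr\'echet subgradient of $d_S$ at a point off $S$ is also one at a nearby point of $S$. Your proposed fallback of transporting the Euclidean statement through a chart does not repair either issue, because $d_S \circ \varphi^{-1}$ is not the Euclidean distance to $\varphi(S \cap \cO)$, so the Euclidean results about distance functions in \cite{clarkeand1998} do not apply to it verbatim.
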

\begin{proof}
  The facts follow from Proposition 3.5 and Corollary 7.14 in \cite{ledyaevzhu2007}.
\end{proof}

\subsubsection{Regularity of Constraints}
We assume we are given a nonnegative function $\varphi : E \times M \rightarrow \R$, locally Lipschitz in $x$ uniformly with respect to $e$, which characterizes $S(e)$ in the sense that $S(e) = \left\{x \, : \, \varphi(e,x) = 0\right\}$. This assumption can be made without loss of generality as one may always take $\varphi(e,x) \colonequals d_{S(e)}(x)$.

\begin{definition}
\label{defn:regular-constraints}
  We say that $S(e_0) \subset M$ is $\varphi$-\emph{regular} at $x_0 \in S(e_0)$ if:
\begin{enumerate}[(i)]
\item There exists $\ve, \Delta > 0$ such for any $(e,x)$ satisfying $d_E(e,e_0) + d_M(x,x_0) < \ve$ and $\varphi(e,x) > 0$ and any $p \in \partial_{L,x}\varphi(e,x)$ we have $\left\|p\right\| \ge \Delta$;
\item If $(e_i, x_i) \rightarrow (e_0, x_0)$ and $p_i \in \partial_{L,x} \varphi(e_i,x_i)$ then there is a subsequence for which $p_{i_j} \rightarrow p \in \partial_{L,x} \varphi(e_0,x_0)$.
\end{enumerate}
\end{definition}

This type of condition is quite common (see, for example, \cite[Section 3.3]{clarkeand1998} or \cite[Theorem 3.6.3]{borweinzhu2005}) and covers a very broad range of applications.

For example, if a closed set $S \subset M$ has no parameter dependence then $S$ is $d_S$-regular at each $x \in S$. Indeed in this case $(ii)$ reduces to boundedness of the sequence $(p_i)_{i = 1}^\infty$ and definition of limiting subgradient. In $(i)$ we may take $\Delta = 1$ by Lemma \ref{lem:norm-lemma}. We emphasize that if one is willing to work with the distance function then \emph{every closed set $S$ is $d_S$-regular} in the sense of Definition \ref{defn:regular-constraints}.

Distance to a closed set can be difficult to calculate however, even in $\R^d$, and so we point out a second important example covered by $\varphi$-regularity. Suppose that we have $r$ inequality constraints $g_i(x) \le e_i$ and $s$ equality constraints $h_j(x) = e_{r+j}$, with $e \in \R^{r+s}$ and functions $g_i, h_j$ $C^1$-smooth. Let
 \begin{equation}
 \label{eq:max-type-penalty-function}
   \varphi(e,x) \colonequals \max\left\{0,g_1(x) - e_1, \dots, g_r(x)- e_r, \left|h_1(x) - e_{r+1}\right|, \dots, \left|h_s(x) -  e_{r+s}\right| \right\}
 \end{equation}
 so that
 \begin{equation*}
   S(e) = \left\{ x \in M \, : \, g_i(x) \le e_i\, , \, h_j(x) = e_{r + j} \right\}.
 \end{equation*}

 \begin{lemma}
 \label{lem:equality-inequality-regular}
If the only solution to
\begin{equation*}
  \sum_{i = 1}^r \lambda_i dg_i(x_0) + \sum_{j = 1}^s \mu_j dh_j(x_0) = 0
\end{equation*}
for $(\lambda, \mu) \in \R^{r+s}$ satisfying the nonnegativity condition $\lambda_i \ge 0$ and complementary slackness condition $\lambda_i g_i(x) = 0$ is the trivial solution then $S(0)$ is $\varphi$-regular at $x_0$.
 \end{lemma}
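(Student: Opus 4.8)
The plan is to reduce everything to a coordinate chart, identify $\varphi(e,\cdot)$ as a pointwise maximum of finitely many $C^1$ functions, and then run a compactness-and-contradiction argument against the stated constraint qualification. First I would fix a chart $\psi : \cO \to \R^d$ about $x_0$ and use Theorem \ref{thm:subgrad-invariance} to transport all subdifferentials into $\R^d$, where the Riemannian norm is equivalent to the Euclidean one near $x_0$; this lets me treat the $dg_l(x)$ and $dh_j(x)$ as ordinary gradients depending continuously on $x$. The key observation is that, writing $|h_j(x) - e_{r+j}| = \max\{\,h_j(x)-e_{r+j},\,-(h_j(x)-e_{r+j})\,\}$, the function $x \mapsto \varphi(e,x)$ is a finite maximum of the constant $0$ together with the $C^1$ pieces $g_l(x) - e_l$ and $\pm(h_j(x) - e_{r+j})$. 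Such a maximum is Clarke regular, so by the standard subdifferential calculus for maxima of smooth functions (see \cite{clarkeand1998}) its partial limiting subdifferential $\partial_{L,x}\varphi(e,x)$ equals the convex hull of the gradients of the pieces that are \emph{active} at $(e,x)$. This formula is the engine of the whole proof.

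To prove $(i)$ I would argue by contradiction. If $(i)$ failed, then taking $\ve = \Delta = 1/k$ I could extract $(e_k, x_k) \to (0,x_0)$ with $\varphi(e_k,x_k) > 0$ and $p_k \in \partial_{L,x}\varphi(e_k,x_k)$ satisfying $\|p_k\| \to 0$. Since $\varphi(e_k,x_k) > 0$ the $0$-piece is inactive, so $p_k$ is a convex combination of active constraint gradients only, and collecting terms I write $p_k = \sum_l \lambda_l^{(k)} dg_l(x_k) + \sum_j \mu_j^{(k)} dh_j(x_k)$ with $\lambda_l^{(k)} \ge 0$ and $\sum_l \lambda_l^{(k)} + \sum_j |\mu_j^{(k)}| = 1$. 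Because the coefficient vectors lie in a simplex and there are only finitely many possible active sets and sign choices, I pass to a subsequence along which the active configuration is constant and the coefficients converge to some $(\lambda,\mu) \neq 0$ with $\lambda_l \ge 0$. Continuity of $\varphi$ and $\varphi(0,x_0) = 0$ force $g_l(x_k) = e_l^{(k)} + \varphi(e_k,x_k) \to 0$ for each active index $l$, hence $g_l(x_0) = 0$ there, while $\lambda_l = 0$ for inactive $l$; together these give complementary slackness $\lambda_l g_l(x_0) = 0$. Passing to the limit in the displayed identity, using $C^1$-continuity of the gradients, produces $\sum_l \lambda_l dg_l(x_0) + \sum_j \mu_j dh_j(x_0) = 0$ with a nontrivial $(\lambda,\mu)$ obeying the sign and slackness conditions, which is exactly the forbidden nontrivial solution. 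This contradiction establishes $(i)$.

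For $(ii)$ the argument is parallel, but the limit must be certified to lie in $\partial_{L,x}\varphi(0,x_0)$. Given $(e_i,x_i) \to (0,x_0)$ and $p_i \in \partial_{L,x}\varphi(e_i,x_i)$, I again write each $p_i$ as a convex combination of the active gradients among $\{0\} \cup \{dg_l(x_i)\}_l \cup \{\pm dh_j(x_i)\}_j$ and pass to a subsequence with constant active configuration and convergent coefficients, so that $p_i \to p = \sum_l \lambda_l dg_l(x_0) + \sum_j \mu_j dh_j(x_0)$. The final task is to recognize $p$ as an element of $\partial_{L,x}\varphi(0,x_0)$: as in $(i)$, any inequality index active along the sequence satisfies $g_l(x_0) = 0$ and is therefore active at $(0,x_0)$; at $(0,x_0)$ both signs $\pm dh_j(x_0)$ of every equality piece are active because $h_j(x_0) = 0$; and the $0$-piece is active since $\varphi(0,x_0) = 0$. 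Writing $\mu_j dh_j(x_0) = |\mu_j|\,(\mathrm{sign}\,\mu_j)\,dh_j(x_0)$ then exhibits $p$ as a convex combination of gradients active at $(0,x_0)$, i.e. $p \in \partial_{L,x}\varphi(0,x_0)$, which is $(ii)$.

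The routine parts are the chart transfer and the simplex/compactness bookkeeping. The step I expect to require the most care is the subdifferential formula for the maximum on a manifold, and in particular pinning down the exact form of $\partial_{L,x}\varphi$ at $(0,x_0)$, where several pieces (the $0$-piece, the active inequalities, and \emph{both} signs of each equality) are simultaneously active, whereas at nearby points with $\varphi > 0$ only one sign of each equality can be active. Tracking which pieces survive in the limit, and confirming that the resulting sign and complementary-slackness conditions are precisely those in the hypothesis, is the crux of the argument.
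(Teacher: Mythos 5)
Your argument is correct and is essentially the paper's own proof written out in full: the paper simply cites the Lemma of page 132 of \cite{clarkeand1998} together with the max-rule for subgradients applied in local coordinates, which is exactly the chart transfer, convex-hull-of-active-gradients formula, and compactness/contradiction argument against the positive linear independence condition that you carry out explicitly. Your additional verification of condition $(ii)$, including the observation that both signs of each equality piece become active at $(0,x_0)$, correctly fills in the detail the citation leaves implicit.
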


 \begin{proof}
We refer the reader to the Lemma of page 132 in \cite{clarkeand1998}, along with the formula for the subgradient of a maximum-type function applied in local coordinates. For the latter, see \cite[Theorem 3.5.8]{borweinzhu2005}. We remark that while the result in \cite{clarkeand1998} is given in terms of the proximal subgradient, the same argument goes through without change for Fr\'echet subgradient.
 \end{proof}

In problems of discrete-time control we will suppose that states $q_i$ and controls $u_i$ are subject, in addition to the usual control constraint $u_i \in \U_i \subset U_i$, to mixed constraints of the form $(q_i, u_i) \in S_i \subseteq Q \times U_i$ with $S_i$ depending on a parameter $e$ in a metric space $E$. We thus require:
\begin{equation*}
 u_i  \in \U_i \hspace{10pt} (0 \le i \le n-1)
 \end{equation*}
and
\begin{equation}
\label{eq:abstract-mixed-constraints}
\left\{\begin{aligned}
u_0 & \in S_0(e) \subset U_0 \\
(q_i,u_i) & \in S_i(e) \subset Q \times U_i \hspace{10pt} (1 \le i \le n-1) \\
q_n & \in S_n(e) \subset Q. \end{aligned}\right.
\end{equation}
 As before we let $\cU \subset \prod_{i = 0}^{n-1} U_i$ denote the set of sequences $\s{u}$ for which $u_i \in \U_i$. We will write $\cA(e) \subset \cU$ for the set of all control sequences $\s{u}$ for which the corresponding state sequence $\s{q}$ satisfies, along with with $\s{u}$, constraints \eqref{eq:abstract-mixed-constraints}.

 We will require these constraints to be $\varphi$-regular in the sense of Definition \ref{defn:regular-constraints} at $u_0 \in S_0(e)$, $(q_1, u_1) \in S_1(e)$, and so on. In order to avoid a lengthy list of assumptions with each theorem we make the following definition:
 \begin{definition}
   \label{defn:regular-along}
   Constraints \eqref{eq:abstract-mixed-constraints} are $\varphi_i$-\emph{regular along} $\s{u} \in \cA(e)$ if $S_0(e)$ is $\varphi_0$-regular at $u_0$, $S_i(e)$ is $\varphi_i$-regular at $(q_i,u_i)$ for $1 \le i \le n-1$, and $S_n(e)$ is $\varphi_n$-regular at $q_n$.
 \end{definition}

We assume a different type of regularity on the control sets:
\begin{definition}
\label{defn:globally-regular}
  A closed set $S \subset M$ is \emph{Clarke regular} if for every $s \in S$, $v \in T_S^C(s)$, and $s_n \rightarrow s$ there exists a sequence $v_n \in T_S^C(s_n)$ which converges to $v$.
\end{definition}
Closed, convex subsets of $\R^m$ are Clarke regular and the condition can be characterized entirely in terms of the \emph{Bouligand tangent cone} (see \cite[Corollary 3.6.13]{clarkeand1998}).

\subsubsection{Penalty Functions and Constraint Qualification}

Let functions $\varphi_i : E \times M \rightarrow \R$ characterize constraints \eqref{eq:abstract-mixed-constraints} and consider a function $P : E \times \cU \rightarrow \R$ defined through
\begin{equation}
\label{eq:pdefn}
  P(e,\s{u}) \colonequals \varphi_0(e,u_0) + \varphi_n(e,q_n) + \sum_{i = 1}^{n-1} \varphi_i(e, q_i, u_i),
\end{equation}
so that $\s{u} \in \cA(e)$ if and only if $\s{u} \in \cU$ and $P(e,\s{u}) = 0$. Let us suppose that we are given some $\ol{\s{u}} \in \cA(\ol{e})$. The results of this section provide a sufficient condition in the form of a constraint qualification for the following inequality to hold for $(e,\s{u})$ sufficiently close to $(\ol{e}, \ol{\s{u}})$ and $\kappa$ sufficiently large:
\begin{equation*}
  d_{\cA(e)}(\s{u}) \le \kappa P(e,\s{u}).
\end{equation*}
The sufficient condition we provide is given in terms of the following constraint qualification:
\begin{definition}
\label{defn:strictly-normal}
  Control $\s{u} \in \cA(e)$ is said to be \emph{strictly $\varphi_i$-normal} if the only sequences
   \begin{equation*}
\left\{ \begin{aligned} b_0 &\in \partial_{L,u} \varphi_0(e,u_0) \\
(a_i, b_i) &\in \partial_{L,(q,u)} \varphi_i(e,q_i,u_i) \hspace{10pt} (1 \le i \le n-1) \\
a_n &\in \partial_{L,q}\varphi_n(q_n) \end{aligned} \right.
   \end{equation*}
which generate a costate sequence $(p_i)_{i = 1}^n$ through $-p_n = a_n$ and
  \begin{equation}
  \label{eq:normal-adjoint}
p_{i-1} = a_{i-1} - D_q F_{i-1}(q_{i-1}, u_{i-1})^*p_i
  \end{equation}
that satisfies for all $v \in T_{\U_i}^C(u_i)$
  \begin{equation}
  \label{eq:normal-max}
0 \le \left<b_{i-1} - D_u F_{i-1}(q_{i-1}, u_{i-1})^* p_i, v \right> \hspace{10pt} (1 \le i \le n-1)
  \end{equation}
  are the sequences $\s{a} \equiv 0$ and $\s{b} \equiv 0$.
\end{definition}

\begin{theorem}
\label{thm:strong-decrease-necessary}
   If the sets $\U_i$ are Clarke regular, constraints \eqref{eq:abstract-mixed-constraints} $\varphi_i$-regular along $\s{u}$, and control $\ol{\s{u}} \in \cA(\ol{e})$ strictly $\varphi_i$-normal then the function $P$ defined by \eqref{eq:pdefn} satisfies the strong decrease condition for $\cU$ near $(\ol{e}, \ol{\s{u}})$.
\end{theorem}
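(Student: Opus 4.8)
The plan is to argue by contradiction, converting a failure of the strong decrease condition into a sequence of approximately critical controls, and then to apply Theorem \ref{thm:dmp} and pass to a limit that violates strict $\varphi_i$-normality. First I would negate Definition \ref{defn:strong-decrease}: if $P$ fails the strong decrease condition near $(\ol{e},\ol{\s{u}})$, then for each $k$ the choice $\ve = \Delta = 1/k$ must fail, producing $(e_k,\s{u}_k) \in E \times \cU$ with $P(e_k,\s{u}_k) > 0$, $d_g(\s{u}_k,\ol{\s{u}}) + d_E(e_k,\ol{e}) < 1/k$, and
\[
-\tfrac1k \left\|\s{v}\right\|_g \le \underline{D}P(e_k,\cdot)(\s{u}_k;\s{v}) \quad \text{for all } \s{v} \in T_{\cU}^C(\s{u}_k).
\]
Thus $\s{u}_k$ is exactly $(1/k)$-critical for the cost function $P(e_k,\cdot)$, which has precisely the form \eqref{eq:jdef} with $\ell = \varphi_n(e_k,\cdot)$, $L_0 = \varphi_0(e_k,\cdot)$, and $L_i = \varphi_i(e_k,\cdot,\cdot)$. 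Theorem \ref{thm:dmp} then yields subgradient data $\s{a}_k = (a_{k,i})$, $\s{b}_k = (b_{k,i})$ with $a_{k,n} \in \partial_{L,q}\varphi_n(e_k,q_{k,n})$, $b_{k,0} \in \partial_{L,u}\varphi_0(e_k,u_{k,0})$, and $(a_{k,i},b_{k,i}) \in \partial_{L,(q,u)}\varphi_i(e_k,q_{k,i},u_{k,i})$, together with costates $\s{p}_k$ satisfying \eqref{eq:endpoint-condition}, \eqref{eq:discrete-adjoint}, and the approximate maximum inequality \eqref{eq:discrete-amp} with $\Delta = 1/k$.

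Next I would pass to the limit. Because the $\varphi_i$ are locally Lipschitz in $x$ uniformly in $e$ and $(e_k,\s{u}_k) \to (\ol{e},\ol{\s{u}})$, the subgradients $a_{k,i}, b_{k,i}$ are uniformly bounded, so along a subsequence they converge to some $\s{a},\s{b}$, while $q_{k,i} \to \ol{q}_i$ and $u_{k,i} \to \ol{u}_i$. Condition (ii) of $\varphi_i$-regularity (Definition \ref{defn:regular-constraints}) then guarantees the limits are genuine subgradients at the base point, i.e. $a_n \in \partial_{L,q}\varphi_n(\ol{e},\ol{q}_n)$, $(a_i,b_i) \in \partial_{L,(q,u)}\varphi_i(\ol{e},\ol{q}_i,\ol{u}_i)$, and so on. Passing \eqref{eq:endpoint-condition}–\eqref{eq:discrete-adjoint} to the limit, using continuity of the maps $D_qF_i$, produces costates $\s{p}$ generated by $\s{a}$ through the adjoint system of Definition \ref{defn:strictly-normal}. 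To transport the maximum inequality \eqref{eq:discrete-amp}, I would fix $v \in T_{\U_i}^C(\ol{u}_i)$ and invoke Clarke regularity of $\U_i$ (Definition \ref{defn:globally-regular}) to select $v_k \in T_{\U_i}^C(u_{k,i})$ with $v_k \to v$; substituting $v_k$ into \eqref{eq:discrete-amp} and letting $k \to \infty$, so that the left side $-\tfrac1k\|v_k\|_g \to 0$, yields the normality inequality \eqref{eq:normal-max}. Hence $\s{a},\s{b},\s{p}$ satisfy exactly the system defining strict $\varphi_i$-normality.

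Finally I would extract the contradiction. Strict $\varphi_i$-normality of $\ol{\s{u}}$ forces $\s{a} \equiv 0$ and $\s{b} \equiv 0$. On the other hand, $P(e_k,\s{u}_k) > 0$ means at least one summand $\varphi_i(e_k,\cdot)$ is positive, so by the pigeonhole principle some fixed index $i_*$ has $\varphi_{i_*} > 0$ along a further subsequence; condition (i) of $\varphi_{i_*}$-regularity then forces the corresponding subgradient to have norm at least $\Delta_0 > 0$, the constant of Definition \ref{defn:regular-constraints}. Taking the limit yields a component of $(\s{a},\s{b})$ of norm at least $\Delta_0$, contradicting triviality. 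The main obstacle is the simultaneous coordination of three limiting passages: (a) preserving the limiting-subgradient membership at the base point despite the moving parameter $e_k$, which is exactly what condition (ii) supplies; (b) transporting the tangent-cone-indexed inequality \eqref{eq:discrete-amp} into the limit cone, for which Clarke regularity is precisely tailored; and (c) retaining a component bounded away from zero, which condition (i) together with the pigeonhole argument secures. Managing these three passages together is the technical heart of the proof, whereas the reduction to Theorem \ref{thm:dmp} via the structural identification of $P(e,\cdot)$ as a cost of the form \eqref{eq:jdef} is the conceptual key.
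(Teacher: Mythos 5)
Your proposal is correct and follows essentially the same route as the paper's proof: negate the strong decrease condition to produce $\Delta_k$-critical controls $\s{u}_k$ for $P(e_k,\cdot)$, apply Theorem \ref{thm:dmp}, pass to the limit using condition (ii) of $\varphi_i$-regularity for the subgradients and Clarke regularity of $\U_i$ for the tangent vectors, and use condition (i) together with $P(e_k,\s{u}_k)>0$ to keep a component of $(\s{a},\s{b})$ bounded away from zero, contradicting strict $\varphi_i$-normality. Your pigeonhole step is simply a more explicit version of the paper's remark that the subsequence may be chosen so that some entry has norm at least $\delta$.
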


\begin{proof}
Let $\delta > 0$ be the minimum of the numbers $\Delta$ appearing in Definition \ref{defn:regular-constraints}. Thus, for example, if $(e,q,u)$ is sufficiently close to $(\ol{e}, \ol{q}_i, \ol{u}_i)$, $\varphi_i(e,q,u) > 0$, and $(a,b) \in \partial_{L,(q,u)} \varphi_i(e,q,u)$ then $\left\|(a,b)\right\| \ge \delta$.

Suppose by way of contradiction that there exists a sequence $(\Delta_k)_{k = 1}^\infty$ with $\Delta_k \downarrow 0$ such that for each $k$ there exist $e_k \in E$ and $\s{u}_k \in \cU$ with the properties $d_E(e_k,\ol{e}) + d(\s{u}_k, \ol{\s{u}}) < \Delta_k$, $P(e_k,\s{u}_k) > 0$, and for any nonzero $\s{v} \in T_{\cU}^C(\s{u}_k)$
  \begin{equation*}
\liminf_{\lambda \downarrow 0} \frac{P(e_k, c_{\s{v}}(\lambda)) - P(e_k, \s{u}_k)}{\lambda} \ge - \Delta_k \left\|\s{v}\right\|_g,
  \end{equation*}
  where $c_{\s{v}} : \R \rightarrow \prod_{i = 0}^{n-1} U_i$ is a smooth function satisfying $c_{\s{v}}^\prime(0) = \s{v}$. Let $\s{q}_k$ be the trajectory for control $\s{u}_k$.

From Theorem \ref{thm:dmp} we obtain sequences $(\s{a}_k)_{k = 1}^\infty$, $(\s{b}_k)_{k = 1}^\infty$ and $(\s{p}_k)_{k = 1}^\infty$ with $b_{k,0} \in \partial_{L,u} \varphi_0(e_k, u_{k,0})$,  $(a_{k,i},b_{k,i}) \in \partial_{L,(q,u)}\varphi_i(e_k, q_{k,i},u_{k,i})$, $-p_{k,n} = a_{k,n} \in \partial_{L,q} \varphi_n(e_k,q_{k,n})$ which satisfy
\begin{equation}
\label{eq:state-constraint-adjoint}
  p_{k,i-1} = -a_{k,i-1} + D_qF_{i-1}(q_{k,i-1}, u_{k,i-1})^* p_{k,i}
\end{equation}
and for all v $\in T_{\U_i}^C(u_{k,i})$
\begin{align}
\label{eq:state-constraint-pmp}
-\Delta_k \left\|v \right\|_g \le \left<b_{k,i-1} -D_u F_{i-1}(q_{k,i-1},u_{k,i-1})^*p_{k,i} , v \right> \hspace{10pt} (1 \le i \le n-1).
\end{align}

Since the sets $S_i$ are $\varphi_i$-regular along $\s{u}$ we may pass to a subsequence for which the sequences $(\s{a}_k)_{k = 1}^\infty$ and $(\s{b}_k)_{k = 1}^\infty$ converge to sequences $\s{a}$ and $\s{b}$ which satisfy $b_{0} \in \partial_{L,q} \varphi_0(e,u_0)$,  $(a_{i},b_{i}) \in \partial_{L,(q,u)} \varphi_i(e,q_{i},u_{i})$, and $a_{n} \in \partial_{L,q} \varphi_n(e,q_{n})$. Moreover, because $P(e_k,\s{u}_k) > 0$ we may choose the subsequence to insure that at least one element of either $\s{a}$ or $\s{b}$ has norm bounded below by $\delta > 0$.

 Taking the limit in \eqref{eq:state-constraint-adjoint} we obtain a costate sequence $\s{p}$ which satisfies \eqref{eq:normal-adjoint}. Because the sets $\U_i$ are Clarke regular, we may take the limit in \eqref{eq:state-constraint-pmp} and obtain \eqref{eq:normal-max}. Strict $\varphi_i$-normality of control $\s{u}$ now requires that $\s{a}$ and $\s{b}$ are identically equal to zero and this is a contradiction. Consequently $P$ must satisfy the strong decrease condition near $\ol{\s{u}} \in \cA(\ol{e})$.
\end{proof}

The following corollary is now immediate from Theorem \ref{thm:penalty} and Theorem \ref{thm:strong-decrease-necessary}:
\begin{corollary}
\label{cor:penalization-for-mixed}
 If the sets $\U_i$ are Clarke regular, constraints \eqref{eq:abstract-mixed-constraints} $\varphi_i$-regular along $\ol{\s{u}} \in \cA(\ol{e})$, and control $\ol{\s{u}}$ strictly $\varphi_i$-normal, then there exist $\ve, \kappa > 0$ such that for all $(e,\s{u}) \in E \times \cU$ satisfying $d_E(e, \ol{e})+d_{\cU}(\s{u}, \ol{\s{u}}) < \ve$ there holds
  \begin{equation*}
    d_{\cA(e)}{\s{u}} \le \kappa P(e, \s{u}).
  \end{equation*}
  In particular, the sets $\cA(e)$ are nonempty for $d_E(e,\ol{e}) < \ve$.
\end{corollary}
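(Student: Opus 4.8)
The plan is to deduce the corollary directly from Theorem~\ref{thm:penalty} and Theorem~\ref{thm:strong-decrease-necessary}, applying the former with the Riemannian manifold $M$ taken to be $\prod_{i=0}^{n-1} U_i$ under the product metric $g$, the closed subset taken to be $\cU$, and the distinguished point $q_0 \in \cA(e_0)$ taken to be $\ol{\s{u}} \in \cA(\ol{e})$. The three hypotheses of the corollary --- Clarke regularity of the sets $\U_i$, $\varphi_i$-regularity of the constraints \eqref{eq:abstract-mixed-constraints} along $\ol{\s{u}}$, and strict $\varphi_i$-normality of $\ol{\s{u}}$ --- are precisely the hypotheses of Theorem~\ref{thm:strong-decrease-necessary}. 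I would first invoke that theorem to produce constants $\ve, \Delta > 0$ witnessing that the penalty function $P$ of \eqref{eq:pdefn} satisfies the strong decrease condition for $\cU$ near $(\ol{e}, \ol{\s{u}})$.

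It then remains to verify the two analytic hypotheses of Theorem~\ref{thm:penalty}, namely that $P$ is jointly continuous in $(e, \s{u})$ and locally Lipschitz in $\s{u}$ near $(\ol{e}, \ol{\s{u}})$. Both reduce to the corresponding properties of the summands $\varphi_i$ in \eqref{eq:pdefn}. Each $\varphi_i$ is nonnegative and locally Lipschitz in its state-control argument uniformly with respect to $e$, and the state components $q_i$ are generated from $\s{u}$ through the $C^1$ update rule \eqref{eq:update-rule}; since each map $\s{u} \mapsto (q_i, u_i)$ is consequently $C^1$ and hence locally Lipschitz, composition shows $P(e, \cdot)$ is locally Lipschitz in $\s{u}$ with respect to $g$. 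Joint continuity follows in the same way once the $\varphi_i$ are continuous in $(e, \cdot)$.

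With all hypotheses in place, Theorem~\ref{thm:penalty} furnishes a neighborhood $\cO \subset E \times \cU$ of $(\ol{e}, \ol{\s{u}})$ on which $d_{\cA(e)}(\s{u}) \le \tfrac{1}{\Delta} P(e, \s{u})$. Setting $\kappa \colonequals 1/\Delta$ and shrinking $\ve$ so that the ball $d_E(e, \ol{e}) + d_{\cU}(\s{u}, \ol{\s{u}}) < \ve$ sits inside $\cO$ yields the stated inequality. For nonemptiness, fix any $e$ with $d_E(e, \ol{e}) < \ve$ and evaluate the inequality at $\s{u} = \ol{\s{u}}$: the right-hand side $\kappa P(e, \ol{\s{u}})$ is finite, so $d_{\cA(e)}(\ol{\s{u}}) < \infty$, which forces $\cA(e) \ne \emptyset$.

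This argument is essentially bookkeeping, so I do not anticipate a genuine obstacle. The one point deserving care is the second paragraph: one must confirm that the implicit dependence of the state sequence $\s{q}$ on the control $\s{u}$ preserves the regularity needed by Theorem~\ref{thm:penalty}, and in particular that the penalty function inherits genuine \emph{joint} continuity in $(e, \s{u})$ --- which needs continuity of the $\varphi_i$ in the parameter $e$, a slightly stronger property than the Lipschitz-in-state assumption carried by the $\varphi_i$ throughout.
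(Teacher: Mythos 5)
Your proposal is correct and follows exactly the route the paper takes: the paper declares the corollary ``immediate from Theorem \ref{thm:penalty} and Theorem \ref{thm:strong-decrease-necessary}'' and offers no further argument, while you additionally verify the continuity and Lipschitz hypotheses of Theorem \ref{thm:penalty} and rightly flag that joint continuity of $P$ in $(e,\s{u})$ needs continuity of the $\varphi_i$ in the parameter $e$, a point the paper leaves implicit.
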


\subsection{Sensitivity}

 We mention two consequences Corollary \ref{cor:penalization-for-mixed} related to the value function $v : E \rightarrow \R \cup \left\{\infty\right\}$, which is defined as usual through
 \begin{equation*}
  v(e) = \inf \left\{ J(\s{u}) \, : \, \s{u} \in \cA(e) \right\}.
\end{equation*}
The results in this section are of a very classical flavor (see, for example, Clarke \cite[Section 6.4]{clarke1983}).

\begin{corollary}
  \label{cor:finite-value-function}
Suppose that control $\s{u} \in \cU$ is optimal for $J$ subject to the constraint $\s{u} \in \cA(0)$, so that $J(\s{u}) = v(0)$. If sets $\U_i$ are Clarke regular, constraints \eqref{eq:abstract-mixed-constraints} are $\varphi_i$-regular along $\s{u}$, and control $\s{u}$ strictly $\varphi_i$-normal then $v$ locally finite at $e=0$.
\end{corollary}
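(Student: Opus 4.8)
The plan is to read local finiteness off directly from the feasibility conclusion of Corollary \ref{cor:penalization-for-mixed}. Since the value function is declared with codomain $\R \cup \{\infty\}$, the assertion that $v$ is locally finite at $0$ amounts to exhibiting a neighborhood of $0$ on which $v(e) < \infty$; and because $J$ is real-valued this is the same as showing that the feasible sets $\cA(e)$ are nonempty for all $e$ near $0$. First I would verify that the hypotheses line up: taking $\ol{e} = 0$ and $\ol{\s{u}} = \s{u}$, the present assumptions that the $\U_i$ are Clarke regular, that the constraints \eqref{eq:abstract-mixed-constraints} are $\varphi_i$-regular along $\s{u}$, and that $\s{u}$ is strictly $\varphi_i$-normal are \emph{verbatim} the hypotheses of Corollary \ref{cor:penalization-for-mixed}.

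I would then invoke that corollary to obtain $\ve, \kappa > 0$ such that $d_E(e,0) < \ve$ forces $\cA(e) \neq \emptyset$. For such $e$, choosing any $\s{u}' \in \cA(e)$ and using that $J$ is finite-valued — a consequence of the standing assumption that $\ell$ and the $L_i$ are locally Lipschitz, composed with the $C^1$ maps $F_i$ — gives $v(e) \le J(\s{u}') < \infty$. Together with $v(0) = J(\s{u}) \in \R$, this shows $v$ is finite on the ball $\{e : d_E(e,0) < \ve\}$, which is the claim.

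To make the bound quantitative, and to foreshadow the calmness estimate that follows, I would instead feed the point $\s{u}$ itself into the penalization inequality $d_{\cA(e)}(\s{u}) \le \kappa\, P(e, \s{u})$. Because $P$ is jointly continuous and $P(0,\s{u}) = 0$ (as $\s{u} \in \cA(0)$), the right-hand side tends to $0$ as $e \to 0$, so for each such $e$ there is a feasible $\s{u}'_e \in \cA(e)$ with $d_{\cU}(\s{u}, \s{u}'_e) \le \kappa P(e,\s{u}) + o(1) \to 0$. Local Lipschitz continuity of $J$ near $\s{u}$, with constant $K_J$, then yields $v(e) \le J(\s{u}'_e) \le v(0) + K_J\bigl(\kappa P(e,\s{u}) + o(1)\bigr)$, an explicit one-sided modulus for $v$ at $0$.

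The only real subtlety is the lower side: a priori $v(e)$ could equal $-\infty$ were $J$ unbounded below on $\cA(e)$. The codomain $\R \cup \{\infty\}$ signals that this is excluded by the standing well-posedness of the problem; alternatively, for feasible points near $\s{u}$ one runs the penalization at the \emph{unperturbed} parameter, $d_{\cA(0)}(\s{u}') \le \kappa P(0,\s{u}')$, projects such an $\s{u}'$ onto $\cA(0)$, and uses global optimality of $\s{u}$ together with Lipschitzness to bound $J(\s{u}') \ge v(0) - K_J\kappa P(0,\s{u}') - o(1)$. I expect the matching of hypotheses and this $-\infty$ bookkeeping to be the only points needing care; everything else is a direct application of Corollary \ref{cor:penalization-for-mixed}.
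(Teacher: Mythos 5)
Your argument is correct and is essentially the paper's own proof, which consists of the single observation that Corollary \ref{cor:penalization-for-mixed} makes the sets $\cA(e)$ nonempty for $e$ near $0$, whence $v(e)<\infty$ there. The additional quantitative modulus and the $-\infty$ bookkeeping you supply are harmless extras the paper does not bother with, since $v$ is declared with codomain $\R\cup\{\infty\}$.
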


\begin{proof}
By Corollary \ref{cor:penalization-for-mixed}, the sets $\cA(e)$ are nonempty for $e$ in a neighborhood of $0$.
\end{proof}

\begin{theorem}
\label{thm:calm-value}
In addition to the assumptions of Corollary \ref{cor:finite-value-function}, suppose that $E$ is a closed subset of $\R^m$ and the functions $J$ and $P$ are globally Lipschitz. Then $v$ is \emph{calm} at $0 \in \R^m$ in the sense that
\begin{equation}
\label{eq:defn-calm}
-\infty < \liminf_{e \rightarrow 0} \frac{v(e) - v(0)}{\left\|e\right\|}.
\end{equation}
\end{theorem}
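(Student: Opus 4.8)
The plan is to prove the one-sided estimate $v(e) \ge v(0) - C\left\|e\right\|$ for all $e \in E$ near $0$ and some constant $C$, since this immediately yields $\liminf_{e\to 0}(v(e)-v(0))/\left\|e\right\| \ge -C > -\infty$. The engine is Clarke's exact penalization: first I would convert the constrained minimization defining $v(0)$ into a problem penalized over $\cU$, and then exploit that $P$ is Lipschitz jointly in $(e,\s{u})$ to compare the value at a perturbed parameter $e$ with the value at $0$.

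Concretely, write $\ol{\s{u}}$ for the optimal control, so $J(\ol{\s{u}}) = v(0)$ and $\ol{\s{u}} \in \cA(0)$. Applying the Exact Penalization Lemma with $f = J$, $S = \cA(0)$, $x_* = \ol{\s{u}}$, and $\ve = 0$ produces $\delta > 0$ such that for every $K$ exceeding the Lipschitz constant of $J$, $\ol{\s{u}}$ minimizes $\s{u} \mapsto J(\s{u}) + K\, d_{\cA(0)}(\s{u})$ over $\{\s{u} \in \cU : d_{\cU}(\s{u}, \ol{\s{u}}) \le \delta\}$. By Corollary \ref{cor:penalization-for-mixed}, which applies precisely because the $\U_i$ are Clarke regular, the constraints are $\varphi_i$-regular, and $\ol{\s{u}}$ is strictly $\varphi_i$-normal, we may replace $d_{\cA(0)}(\s{u})$ by $\kappa\, P(0,\s{u})$ on a possibly smaller neighborhood, so that $\ol{\s{u}}$ locally minimizes $\s{u} \mapsto J(\s{u}) + K\kappa\, P(0,\s{u})$.

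Now fix $e$ near $0$ and any $\s{u} \in \cA(e)$ lying in this neighborhood; then $P(e,\s{u}) = 0$, and using the global Lipschitz constant $L_P$ of $P$ together with $d_E(e,0) = \left\|e\right\|$ I would estimate
\begin{align*}
J(\s{u}) &= J(\s{u}) + K\kappa\, P(e,\s{u}) \ge J(\s{u}) + K\kappa\, P(0,\s{u}) - K\kappa L_P \left\|e\right\| \\
&\ge \bigl(J(\ol{\s{u}}) + K\kappa\, P(0,\ol{\s{u}})\bigr) - K\kappa L_P\left\|e\right\| = v(0) - K\kappa L_P\left\|e\right\|,
\end{align*}
where the second inequality is the local minimality just established and $P(0,\ol{\s{u}}) = 0$. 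Taking the infimum over admissible $\s{u}$ gives $v(e) \ge v(0) - K\kappa L_P\left\|e\right\|$, and dividing by $\left\|e\right\|$ and passing to the liminf finishes the argument with $C = K\kappa L_P$.

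The step requiring the most care is the reconciliation of the local penalization estimate with the global infimum defining $v$: the Exact Penalization Lemma and Corollary \ref{cor:penalization-for-mixed} control $\s{u}$ only in a fixed neighborhood of $\ol{\s{u}}$, so one must argue that for $e$ sufficiently small the relevant (near-)minimizers of $J$ over $\cA(e)$ may be taken in that neighborhood, i.e. that $v$ here is understood as the local value function attached to $\ol{\s{u}}$. Once the radii from the two results are intersected to a common $\delta$ and the constants $K$, $\kappa$, and $L_P$ are fixed, the remaining manipulations are exactly the routine Lipschitz estimates displayed above.
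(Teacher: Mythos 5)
Your argument is correct and follows essentially the same route as the paper's proof: exact penalization of $J$ over $\cA(0)$, the bound $d_{\cA(0)}(\s{u}) \le \kappa P(0,\s{u})$ from Corollary \ref{cor:penalization-for-mixed}, and the Lipschitz estimate $P(0,\s{u}) = P(0,\s{u}) - P(e,\s{u}) \le L_P\left\|e\right\|$ for $\s{u} \in \cA(e)$. The local-versus-global subtlety you flag at the end is shared by the paper's own proof (its near-minimizers $\s{u}_i \in \cA(e_i)$ are likewise only covered by Corollary \ref{cor:penalization-for-mixed} when they lie in the fixed neighborhood of $\ol{\s{u}}$), so your write-up is no less complete than the original.
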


\begin{proof}
Let $(e_i)_{i = 0}^\infty$ be a sequence converging to $0 \in \R^m$. Since $v(e_i)$ is eventually finite we may choose for each $i$ a control $\s{u}_i \in \cA(e_i)$ such that $J(\s{u}_i) \le v(e_i) + \left\|e_i \right\|^2$. Let $K_J$ be a Lipschitz constant for $J$ and choose $\kappa > K_J$. Using exact penalization we can see that $J(\s{u}_i) +  \kappa d_{\cA(0)}(\s{u}_i) \ge v(0)$. From this and our choice of $\s{u}_i$ we obtain
\begin{align*}
  \frac{v(e_i) - v(0)}{\left\|e\right\|} & \ge \frac{J(\s{u}_i) -  \left\|e_i \right\|^2 - v(0)}{\left\|e_i\right\|} \ge  -\left\|e_i\right\| - \frac{\kappa d_{\cA(0)}(\s{u}_i)}{\left\|e_i\right\|}.
\end{align*}

Now use Corollary \ref{cor:penalization-for-mixed} and $P(e_i,\s{u}_i) = 0$ to write
\begin{align*}
  \frac{v(e_i) - v(0)}{\left\|e\right\|} & \ge - \left\|e_i\right\| + \kappa \frac{P(e_i,\s{u}_i) - P(0, \s{u}_i)}{\left\|e_i\right\|},
\end{align*}
for a possibly larger value of $\kappa$ and for $i$ sufficiently large. Since $P$ is Lipschitz and sequence $(e_i)_{i = 1}^\infty$ is arbitrary, \eqref{eq:defn-calm} follows.
\end{proof}

\subsection{Application to Smooth Equality and Inequality Constraints}

Suppose we are interested in minimizing a function $J : \cU \rightarrow \R$ as in \eqref{eq:jdef}, subject to the constraints
\begin{equation*}
\left\{ \begin{aligned}
g_j(q_i,u_i)& \le e_j \hspace{20pt} (1 \le i \le n-1, \; 1 \le j \le r) \\
h_j(q_i,u_i)& = e_{r+j} \hspace{10pt} (1 \le i \le n-1, \; 1 \le j \le s)  \end{aligned} \right.
\end{equation*}
as well as constraints
\begin{equation*}
\left\{ \begin{aligned}
G_j(q_n)& \le e_j \hspace{20pt} (1 \le j \le r) \\
H_j(q_n)& = e_{r+j} \hspace{10pt} (1 \le j \le s). \end{aligned} \right.
\end{equation*}

Suppose that we have a control $\s{u}$ which minimizes $J$ for $e = 0$, so that $J(\s{u}) = v(0)$. Suppose also that for each $1 \le i \le n-1$ the only solution to the following problem:
\begin{equation*}
  \sum_{j = 1}^r \lambda_j dg_j(q_i, u_i) + \sum_{j = 1}^s \mu_j h_j(q_i,u_i) = 0
\end{equation*}
satisfying the nonnegativity condition $\lambda_j \ge 0$ and the complementary slackness condition $\lambda_j g_j(q_i,u_i) = 0$ is $(\lambda, \mu) = 0$. Under the analogous assumption on $dG_j(q_n)$ and $dH_j(q_n)$, Lemma \ref{lem:equality-inequality-regular} assures us that the constraints are $\varphi_i$-regular along $\s{u}$ for functions $\varphi_i$ defined as in \eqref{eq:max-type-penalty-function}. Thus if control sets $\U_i$ are Clarke regular then one of the following must hold. First, it may be that $\s{u}$ is not strictly $\varphi_i$-normal. In this case there exists a costate sequence $(p_i)_{i = 1}^n$ and a nonzero sequence $(\lambda_{j,i}, \mu_{j,i})_{i =1, j = 1}^{n,r}$ such that
\begin{equation*}
  -p_n = \sum_{j = 1}^r \lambda_{n,j} dG_j(q_n) + \sum_{j = 1}^s \mu_{n,j} dH_j(q_n)
\end{equation*}
and
\begin{align*}
  p_{i-1}  &= -\sum_{j = 1}^r \lambda_{i-1,j} d_qg_j(q_{i-1}, u_{i-1}) - \sum_{j = 1}^s \mu_{i-1,j} d_q h_j(q_{i-1}, u_{i-1}) \\
	& \quad + D_qF_{i-1}(q_{i-1}, u_{i-1})^* p_{k,i}
\end{align*}
and for all $v \in T_{\U_i}^C(u_i)$
\begin{align*}
& \left< \sum_{j = 1}^r \lambda_{i-1,j} d_ug_j(q_{i-1}, u_{i-1}) + \sum_{j = 1}^s \mu_{i-1,j} d_u h_j(q_{i-1}, u_{i-1})-D_u F_{i-1}(q_{i-1},u_{i-1})^*p_{i},v\right>\\
& \ge 0 \hspace{10pt} (1 \le i \le n-1).
\end{align*}
For each $i$ both the complementary slackness and positivity conditions hold on $\lambda$. Moreover, we must have either $\s{p} \ne 0$ or else there exists an index $i$ for which
\begin{equation*}
  \sum_{j = 1}^r \lambda_{i-1,j} d_ug_j(q_{i-1}, u_{i-1}) + \sum_{j = 1}^s \mu_{i-1,j} d_u h_j(q_{i-1}, u_{i-1}) \ne 0.
\end{equation*}

On the other hand $\s{u}$ may be strictly $\varphi_i$-normal. In this case the feasible sets are nonempty for sufficiently small perturbations $e$ of the right-hand side. If in addition one can show that functions $\varphi_i$ are globally Lipschitz then the value function is calm at $0$. This is the case, for example, if $Q$ and $U_i$ are compact and $e$ is restricted to a compact neighborhood of the origin in $\R^{r+s}$.

We turn now to the study of necessary optimality conditions for constrained problems.

\section{Discrete-Time Geometric Maximum Principle for Constrained Pro-blems}
\label{sec:pmp-constraints}

In this section we develop the maximum principle for the problem of minimizing a function $J$ defined by \eqref{eq:jdef} subject to control constraints $u_i \in \U_i \subset U$ and constraints of the following types:
\begin{enumerate}[(i)]
\item Pure state constraints: $q_i \in S_i \subset Q$;
\item Mixed constraints: $(q_i, u_i) \in S_i \subset Q \times U$ for $1 \le i \le n-1$, $q_n \in S_n \subset Q$.
\end{enumerate}
We consider first the case of pure state constraints.

\subsection{Pure State Constraints}

For the sake of exposition we will assume in this section that $\ell$ and $L_i$ are $C^1$-smooth functions. This simplifies the statements of the theorems and covers a great deal of applications. We emphasize, however, that Lipschitz costs may be covered using exactly the same techniques.

\begin{theorem}
\label{thm:pure-state-max-princ}
  Suppose that $\s{u} \in \cU$ minimizes a cost function $J$ defined by \eqref{eq:jdef} subject to pointwise state constraints
  \begin{equation*}
    q_i \in S_i \colonequals \left\{q \in Q \, : \, \varphi_i(q) = 0 \right\}
  \end{equation*}
  for locally Lipschitz, nonnegative functions $\varphi_i$. If the sets $\U_i$ are Clarke regular, and functions $\ell$, $L_i$ are $C^1$-smooth, and the constraints are $\varphi_i$-regular along $\s{u}$ then there is a number $\lambda_0 \in \left\{0,1\right\}$ and a costate sequence $\s{p}$ which:
   \begin{enumerate}[(i)]
   \item Satisfies the endpoint condition: $- p_n \in \lambda_0 d \ell(q_n) + \partial_L \varphi_n(q_n)$;
\item Evolves according to:
  \begin{equation}
  \label{eq:abstract-adjoint-1}
    p_{i-1} + D_q F_{i-1}(q_{i-1}, u_{i-1})^* p_i \in \lambda_0 d_q L_{i-1}(q_{i-1}, u_{i-1}) + \pL \varphi_{i-1}(q_{i-1});
  \end{equation}
\item And satisfies for all $v \in T_{\U_{i-1}}^C(u_{i-1})$
  \begin{align}
  \label{eq:sc-max-princ}
  0 \le \left<\lambda_0d_u L_{i-1}(q_{i-1}, u_{i-1}) - D_u F_{i-1}(q_{i-1}, u_{i-1})^* p_i , v\right> \hspace{10pt} (1 \le i \le n).
  \end{align}
\end{enumerate}
Moreover, either $\lambda_0 = 1$ or $\s{p}$ is not identically equal to zero.
\end{theorem}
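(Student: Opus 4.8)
The plan is to argue by a dichotomy on the constraint qualification of Definition \ref{defn:strictly-normal}, producing the \emph{abnormal} multiplier ($\lambda_0 = 0$) when strict $\varphi_i$-normality fails and the \emph{normal} multiplier ($\lambda_0 = 1$) when it holds. Throughout, the relevant penalty is $P$ of \eqref{eq:pdefn} specialized to the pure state case, where it reduces to a sum $\sum_i \varphi_i(q_i)$; since each $\varphi_i$ depends only on the state, its partial $u$-subgradient vanishes, which simplifies several terms.

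\emph{Abnormal case.} Suppose $\s{u}$ is not strictly $\varphi_i$-normal. Negating Definition \ref{defn:strictly-normal} furnishes sequences $\s{a}$, $\s{b}$, not both identically zero, with $a_n \in \pL\varphi_n(q_n)$ and $a_i \in \pL\varphi_i(q_i)$, generating a costate $\s{p}$ through $-p_n = a_n$ and \eqref{eq:normal-adjoint} and satisfying \eqref{eq:normal-max}. Because the constraints are pure state, every $b_i = 0$, so in fact $\s{a} \ne 0$. Taking $\lambda_0 = 0$, conditions (i)--(iii) are read off directly: (i) is $-p_n = a_n \in \pL\varphi_n(q_n)$; rewriting \eqref{eq:normal-adjoint} as $p_{i-1} + D_qF_{i-1}(q_{i-1},u_{i-1})^* p_i = a_{i-1} \in \pL\varphi_{i-1}(q_{i-1})$ is \eqref{eq:abstract-adjoint-1}; and \eqref{eq:normal-max} with $b_{i-1} = 0$ is \eqref{eq:sc-max-princ}. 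For nontriviality, reading \eqref{eq:normal-adjoint} backward shows that $\s{p} \equiv 0$ would force $\s{a} \equiv 0$; hence $\s{p} \ne 0$, as required.

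\emph{Normal case.} Suppose $\s{u}$ is strictly $\varphi_i$-normal. With $\U_i$ Clarke regular and the constraints $\varphi_i$-regular along $\s{u}$, Theorem \ref{thm:strong-decrease-necessary} shows that $P$ satisfies the strong decrease condition, and Corollary \ref{cor:penalization-for-mixed} yields $\kappa > 0$ with $d_{\cA}(\s{u}') \le \kappa P(\s{u}')$ for $\s{u}'$ near $\s{u}$. Since $\ell$ and the $L_i$ are $C^1$, $J$ is locally Lipschitz; as $\s{u}$ minimizes $J$ over $\cA$, the Exact Penalization Lemma makes $\s{u}$ a local minimizer over $\cU$ of $J + K d_\cA$ for $K$ above a Lipschitz constant of $J$. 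Using $d_\cA \le \kappa P$, $P \ge 0$, and $P(\s{u}) = 0$, it follows that $\s{u}$ locally minimizes $J + K\kappa P$ over $\cU$, hence is $0$-critical for it. This functional has the form \eqref{eq:jdef} with endpoint cost $\ell + K\kappa\varphi_n$ and running costs $L_i + K\kappa\varphi_i$, so Theorem \ref{thm:dmp} applies with $\Delta = 0$. The $C^1$ sum rule splits $\pL(\ell + K\kappa\varphi_n)(q_n) = d\ell(q_n) + K\kappa\pL\varphi_n(q_n)$ and likewise for the $L_i$, the $u$-component of each penalty contribution vanishing. Setting $\lambda_0 = 1$ and absorbing the positive factor $K\kappa$ into the constraint subgradients recovers (i)--(iii), and nontriviality is automatic since $\lambda_0 = 1$.

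I expect the main obstacle to be bookkeeping rather than analysis. One must reconcile the sign convention and index range of the adjoint \eqref{eq:discrete-adjoint} produced by Theorem \ref{thm:dmp} with those of \eqref{eq:abstract-adjoint-1} and \eqref{eq:normal-adjoint}, confirm that the $C^1$ sum rule is valid in local coordinates (via Theorem \ref{thm:subgrad-invariance}), and ensure that the penalty constant $K\kappa$ is genuinely absorbed into the scalable constraint data so that the normalization $\lambda_0 \in \left\{0,1\right\}$ is preserved. Checking that the maximization condition \eqref{eq:sc-max-princ} holds across the full range $1 \le i \le n$, including the terminal control, is the one index point to treat with care.
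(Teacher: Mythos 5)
Your proposal follows the same route as the paper's proof: the dichotomy on strict $\varphi_i$-normality, with the abnormal case ($\lambda_0=0$) read off directly from Definition \ref{defn:strictly-normal} (using that the $u$-components of the constraint subgradients vanish for pure state constraints) and the normal case ($\lambda_0=1$) obtained by combining Corollary \ref{cor:penalization-for-mixed} with exact penalization and then applying Theorem \ref{thm:dmp} with $\Delta=0$ to the penalized cost. Your version is, if anything, more explicit than the paper's about the nontriviality of $\s{p}$ in the abnormal case and about the absorption of the penalty constant $K\kappa$, a normalization point the paper itself passes over silently.
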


\begin{proof}
We consider two possibilities: either $\s{u}$ is strictly $\varphi_i$-normal or it is not. If $\s{u}$ is not strictly $\varphi_i$-normal then the theorem holds with $\lambda_0 = 0$, since in this case the conclusions are a restatement of Definition \ref{defn:strictly-normal}. We need only check that $\s{p}$ may be chosen so that it has at last one nonzero entry. This is a straightforward consequence of Definition \ref{defn:strictly-normal}, the endpoint condition, and \eqref{eq:abstract-adjoint-1}.

On the other hand, if $\s{u}$ is strictly $\varphi_i$-normal then there exist real numbers $\ve, \kappa > 0$ such that
\begin{equation*}
  d_{\cA}(\s{c}) \le \kappa \sum_{i = 1}^n \varphi_i(r_i)
\end{equation*}
for all controls $\s{c}$ within distance $\ve$ of $\s{u}$, where we have written $\s{r}$ for the state sequence corresponding to control $\s{c}$. Again using an exact penalization argument and possibly increasing $\kappa$ we can see that $\s{u}$ is a local minimizer of the function
\begin{equation*}
  \Lambda(\s{c}) \colonequals J(\s{c}) + \kappa \sum_{i = 1}^n \varphi_i(r_i).
\end{equation*}
Thus $\s{u}$ is critical for $\Lambda$ and the result now follows with $\lambda_0 = 1$.
\end{proof}

As in Section \ref{sec:discrete-gmp} we may make additional assumptions to arrive at a maximum principle.

\begin{theorem}
  Suppose that $F_i$ and $\U_i$ satisfy assumptions \textnormal{\textbf{(}}$\mathsf{A}$\textnormal{\textbf{)}} of Section \ref{sec:discrete-gmp}, the constraints are $\varphi_i$-regular along $\s{u}$, and functions $\ell$, $L_i$ are $C^1$-smooth. Then in the conclusions of Theorem \ref{thm:pure-state-max-princ} we may replace \eqref{eq:sc-max-princ} with
\begin{equation*}
  H_i(\lambda_0,u_i,p_{i+1}) = \max_{u \in \U_i}   H_i(\lambda_0,u_i,p_{i+1}) \hspace{10pt} (0 \le i \le n-1)
\end{equation*}
where $H_i : \left\{0,1\right\} \times \U_i \times T_{q_{i+1}}^* Q \rightarrow \R$ by
\begin{equation}
\label{eq:hamiltonian-constraints}
H_i(\lambda,u,p) \colonequals \left<\mathbb{F}E_{f_i(q_i,u_i)}^* p, f(q_i,u) \right> - \lambda L_i(q_i,u).
\end{equation}
  \end{theorem}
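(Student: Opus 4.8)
The plan is to deduce this from Theorem \ref{thm:pure-state-max-princ} by repeating, almost verbatim, the final step in the proof of Theorem \ref{thm:discrete-geometric-pmp}; the only genuinely new feature is that the abnormal multiplier $\lambda_0$ is carried along. First I would verify that Theorem \ref{thm:pure-state-max-princ} applies: assumptions $(\mathsf{A})$ force the sets $\U_i$ to be closed and convex subsets of $\R^{k_i}$, hence Clarke regular, and we are assuming $\ell$, $L_i$ are $C^1$-smooth with the constraints $\varphi_i$-regular along $\s{u}$. That theorem then produces $\lambda_0 \in \{0,1\}$ together with a costate sequence $\s{p}$ satisfying the endpoint condition, the adjoint inclusion \eqref{eq:abstract-adjoint-1}, and the variational inequality \eqref{eq:sc-max-princ}. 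It remains only to upgrade \eqref{eq:sc-max-princ} to the pointwise maximization.

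For this I would reuse the two ingredients from the proof of Theorem \ref{thm:discrete-geometric-pmp}. Because $f_i$ is control affine, for every $u \in \U_i$ we have
\[
  D_u F_i(q_i,u_i)(u - u_i) = \mathbb{F}E_{f_i(q_i,u_i)}\bigl(f(q_i,u) - f(q_i,u_i)\bigr),
\]
and since $\U_i$ is closed and convex, \cite[Proposition 2.5.5]{clarkeand1998} gives $T_{\U_i}^C(u_i) = c\ell\,\bigcup_{t>0} t(\U_i - u_i)$, so in particular $u - u_i \in T_{\U_i}^C(u_i)$ for each $u \in \U_i$. Substituting $v = u - u_i$ into \eqref{eq:sc-max-princ} and transferring $p_{i+1}$ across the fibre derivative through the pullback relation yields
\[
  \left<\mathbb{F}E_{f_i(q_i,u_i)}^* p_{i+1},\, f(q_i,u) - f(q_i,u_i)\right> \le \lambda_0 \left<d_u L_i(q_i,u_i),\, u - u_i\right>.
\]

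The last step is to invoke convexity of $L_i$ in the control together with $\lambda_0 \ge 0$: the subgradient inequality gives $\left<d_u L_i(q_i,u_i), u - u_i\right> \le L_i(q_i,u) - L_i(q_i,u_i)$, and multiplying by the nonnegative scalar $\lambda_0$ preserves the direction of the inequality. Rearranging the resulting estimate is precisely $H_i(\lambda_0,u,p_{i+1}) \le H_i(\lambda_0,u_i,p_{i+1})$ for every $u \in \U_i$, which is the asserted maximization $H_i(\lambda_0,u_i,p_{i+1}) = \max_{u \in \U_i} H_i(\lambda_0,u,p_{i+1})$. I expect no real obstacle here; the one point warranting care is that $\lambda_0$ may vanish, but the argument is uniform over $\lambda_0 \in \{0,1\}$: when $\lambda_0 = 0$ the running-cost term disappears and convexity is not even needed, while when $\lambda_0 = 1$ the convexity step supplies exactly the missing direction. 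Everything else is a direct transcription of the unconstrained case.
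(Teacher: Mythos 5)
Your proposal is correct and follows essentially the same route as the paper, which simply remarks that the details combine the proofs of Theorems \ref{thm:discrete-geometric-pmp} and \ref{thm:pure-state-max-princ} and that Clarke regularity of $\U_i$ is automatic from convexity --- precisely the two points you verify. Your write-up merely fills in the details the paper leaves implicit (the control-affine identity, the tangent-cone formula for convex sets, and the convexity step with the nonnegative multiplier $\lambda_0$), all of which check out.
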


  \begin{proof}
    The details follow those of Theorems \ref{thm:discrete-geometric-pmp} and \ref{thm:pure-state-max-princ}. We note that the assumption of Clarke regularity, which is necessary for Theorem \ref{thm:pure-state-max-princ}, is automatic from the convexity of $\U_i$.
  \end{proof}

We emphasize that every closed set is $d_S$-regular and so, if one is willing to accept results in terms of the normal cone, then \eqref{eq:abstract-adjoint-1} can be written as
 \begin{equation*}
    p_{i-1} + D_q F_{i-1}(q_{i-1}, u_{i-1})^* p_i \in \lambda_0 d_q L_{i-1}(q_{i-1}, u_{i-1}) + N_{S_{i-1}}^L(q_{i-1})
  \end{equation*}
 without any assumptions on the constraint sets $S_i$ beyond asking that they be closed.

\subsection{Mixed Constraints}
\label{sec:mixed-sensitivity}

As before we assume in this section that $\ell$ and $L_i$ are $C^1$-smooth functions in order to simplify statements of Theorem and remark that Lipschitz costs may be covered using exactly the same techniques. We consider the following problem: \emph{Minimize $J : \cU \rightarrow \R$ defined by \eqref{eq:jdef} subject to constraints}
  \begin{equation}
  \label{eq:constant-mixed-constraints}
\left\{ \begin{aligned}
(q_i,u_i) & \in S_i \hspace{2pt} \colonequals \left\{(q,u) \in Q \times U_i \, : \, \varphi_i(q,u) = 0 \right\} \hspace{10pt} (1 \le i \le n-1) \\
q_n & \in S_n \colonequals \left\{q \in Q  \, : \, \varphi_n(q) = 0 \right\}  \end{aligned} \right.
  \end{equation}
\emph{where functions $\varphi_i$ are locally Lipschitz and nonnegative}.

\begin{theorem}
\label{thm:mixed-max-princ}
  Suppose that $\s{u} \in \cU$ minimizes a cost function $J$ defined by \eqref{eq:jdef} subject to mixed constraints \eqref{eq:constant-mixed-constraints}. If the sets $\U_i$ are Clarke regular, the constraints are $\varphi_i$-regular along $\s{u}$, and functions $F_i, \ell, L_i$ are $C^1$-smooth then there exist sequences $\s{a} = (a_i)_{i = 1}^n$ and $\s{b} = (b_i)_{i = 0}^{n-1}$ with $b_0 = 0$
  \begin{equation*}
\left\{ \begin{aligned}
(a_i,b_i) & \in \pL \varphi_i(q_i,u_i) \hspace{10pt} (1 \le i \le n-1) \\
a_n & \in \pL \varphi_n(q_n) \end{aligned} \right.
  \end{equation*}
for which the costate sequence $\s{p}$ defined by
\begin{equation}
\label{eq:mixed-constraints-endpoint}
p_n = -a_n - \lambda_0 d \ell(q_n)
 \end{equation}
 and
 \begin{equation}
\label{eq:mixed-constraints-adjoint}
   p_{i-1} = \lambda_0 d_q L_{i-1}(q_{i-1}, u_{i-1}) + a_i - D_q F_{i-1}(q_{i-1}, u_{i-1})^* p_i
 \end{equation}
 satisfies for all $v \in T_{\U_{i-1}}^C(u_{i-1})$
 \begin{align*}
0 \le \left< \lambda_0 dL_{i-1}(q_{i-1}, u_{i-1}) + b_i - D_u F_{i-1}(q_{i-1}, u_{i-1})^*p_i , v\right> \hspace{10pt} (1 \le i \le n).
 \end{align*}
 Moreover, either $\s{a}$ and $\s{b}$ are not both identically zero or else $\lambda_0 = 1$.
\end{theorem}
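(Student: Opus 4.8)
The plan is to mirror the two-case proof of Theorem \ref{thm:pure-state-max-princ}, splitting on whether or not $\mathbf{u}$ is strictly $\varphi_i$-normal in the sense of Definition \ref{defn:strictly-normal}. The one genuinely new feature compared with the pure-state case is that each penalty term $\varphi_i(q_i,u_i)$ now depends on the control as well as the state, so the constraint multiplier extracted at stage $i$ will carry a nontrivial control component $b_i$; this is exactly what produces the $b_i$ terms in the maximum condition.

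First I would dispose of the abnormal case. If $\mathbf{u}$ fails to be strictly $\varphi_i$-normal then, by the negation of Definition \ref{defn:strictly-normal}, there exist sequences $\mathbf{a},\mathbf{b}$ — not both identically zero — with $b_0 = 0$, $(a_i,b_i) \in \partial_L\varphi_i(q_i,u_i)$ for $1 \le i \le n-1$, and $a_n \in \partial_L\varphi_n(q_n)$, generating a costate sequence through \eqref{eq:normal-adjoint} and satisfying \eqref{eq:normal-max}. These are precisely the asserted conclusions with cost multiplier $\lambda_0 = 0$, so nothing further is needed here.

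In the normal case I would turn to exact penalization. Since the $\mathbb{U}_i$ are Clarke regular and the constraints are $\varphi_i$-regular along $\mathbf{u}$, Corollary \ref{cor:penalization-for-mixed} provides $\kappa,\ve > 0$ with $d_{\mathcal{A}}(\mathbf{c}) \le \kappa P(\mathbf{c})$ for $\mathbf{c}$ within $\ve$ of $\mathbf{u}$, where $P(\mathbf{c}) = \varphi_n(q_n) + \sum_{i=1}^{n-1}\varphi_i(q_i,u_i)$. Combining this with the Exact Penalization Lemma (and enlarging $\kappa$ beyond a Lipschitz constant for $J$) shows the constrained minimizer $\mathbf{u}$ to be an unconstrained local minimizer, hence a critical point in the sense of Definition \ref{defn:critical} with $\Delta = 0$, of $\Lambda(\mathbf{c}) \colonequals J(\mathbf{c}) + \kappa P(\mathbf{c})$. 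The function $\Lambda$ has terminal cost $\ell + \kappa\varphi_n$, running costs $L_i + \kappa\varphi_i$ for $1 \le i \le n-1$, and the unaltered $L_0$, so Theorem \ref{thm:dmp} applied with $\Delta = 0$ yields costates together with multipliers lying in $\partial_L(\ell + \kappa\varphi_n)(q_n)$ and $\partial_L(L_i + \kappa\varphi_i)(q_i,u_i)$.

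The main obstacle will be the final separation of these multipliers into a smooth cost part and a constraint part. Because $\ell$ and the $L_i$ are $C^1$-smooth and the $\varphi_i$ are locally Lipschitz, the exact sum rule for the limiting subdifferential gives $\partial_L(\ell + \kappa\varphi_n)(q_n) = d\ell(q_n) + \kappa\partial_L\varphi_n(q_n)$ and $\partial_L(L_i + \kappa\varphi_i)(q_i,u_i) = (d_q L_i, d_u L_i) + \kappa\partial_L\varphi_i(q_i,u_i)$. Peeling off $d\ell$, $d_q L_i$, and $d_u L_i$ produces the terms carrying $\lambda_0 = 1$ in \eqref{eq:mixed-constraints-endpoint}, \eqref{eq:mixed-constraints-adjoint}, and the maximum condition, while the remaining elements of $\partial_L\varphi_i$ furnish the constraint multipliers $(a_i,b_i)$; the positive constant $\kappa$ is absorbed into these multipliers, which is harmless since replacing $\varphi_i$ by $\kappa\varphi_i$ leaves $S_i$ unchanged. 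Because $L_0$ is smooth and carries no associated constraint, its control derivative enters only through $\lambda_0 d_u L_0$ and one reads off $b_0 = 0$. What remains is the routine bookkeeping of matching the sign and index conventions of Theorem \ref{thm:dmp} to those of \eqref{eq:mixed-constraints-adjoint} and transferring \eqref{eq:discrete-amp} to arbitrary $v \in T_{\mathbb{U}_i}^C(u_i)$ using Clarke regularity. The final dichotomy in the statement — that $\mathbf{a},\mathbf{b}$ are not both zero or else $\lambda_0 = 1$ — is just the split into these two cases.
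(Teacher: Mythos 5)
Your proposal follows essentially the same route as the paper's own proof: split on strict $\varphi_i$-normality, read off the abnormal case from Definition \ref{defn:strictly-normal}, and in the normal case use Corollary \ref{cor:penalization-for-mixed} plus exact penalization to reduce to Theorem \ref{thm:dmp} applied to $\Lambda = J + \kappa P$ with $\lambda_0 = 1$. You actually supply more detail than the paper does on the final step (the exact sum rule $\pL(\ell + \kappa\varphi_n) = d\ell + \kappa\pL\varphi_n$ and the absorption of $\kappa$ into the multipliers), and that detail is correct.
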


\begin{proof}
If $\s{u}$ is not strictly $\varphi_i$-normal, then result follows from the definition of strict $\varphi_i$-normality. On the other hand, if $\s{u}$ is strictly $\varphi_i$-normal then there exist real numbers $\ve, \kappa > 0$ such that
\begin{equation*}
  d_{\cA}(\s{c}) \le \kappa \varphi_n(r_n) + \kappa \sum_{i = 1}^n \varphi_i(r_i, c_i)
\end{equation*}
for all $\s{c}$ within distance $\ve$ of $\s{u}$. Using a standard exact penalization argument and possibly increasing $\kappa$ we can see that $\s{u}$ is a local minimizer of the function
\begin{equation*}
  \Lambda(\s{c}) \colonequals J(\s{c}) + \kappa \varphi_n(r_n) + \kappa \sum_{i = 1}^n \varphi_i(r_i, c_i).
\end{equation*}
Thus $\s{u}$ is extremal for $\Lambda$ and the result now follows with $\lambda_0 = 1$.
\end{proof}

Theorem \ref{thm:mixed-max-princ} is lacking the usual nondegeneracy condition on $\s{p}$ and the conditions on $\s{a}$ and $\s{b}$ may not be very informative. By strengthening the assumptions on the sets $S_i$ we can obtain a nondegeneracy result for $\s{p}$:
\begin{corollary}
Suppose that in addition to the hypotheses of Theorem \ref{thm:mixed-max-princ} we suppose that the sets $S_i$ for $1 \le i \le n-1$ satisfy, for some $\kappa > 0$, the following \emph{bounded slope} condition:
\begin{equation}
\label{eq:bounded-slope-condition}
  (a,b) \in \partial_L \varphi_i(q,u) \Rightarrow \left\|b\right\| \le \kappa \left\|a \right\|.
\end{equation}
Then either $\s{p}$ is nonzero or $\lambda_0 = 1$.
\end{corollary}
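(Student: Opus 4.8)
The plan is to argue by contraposition, leaning entirely on the nondegeneracy clause already supplied by Theorem \ref{thm:mixed-max-princ}. That theorem hands us $\lambda_0 \in \{0,1\}$ together with the multiplier sequences $\s{a}$, $\s{b}$ and the costate $\s{p}$, and it guarantees that the pair $(\s{a},\s{b})$ is not identically zero whenever $\lambda_0 \ne 1$. Since the desired conclusion is ``$\s{p}$ nonzero or $\lambda_0 = 1$,'' I would assume toward a contradiction that $\lambda_0 = 0$ and that the costate sequence $\s{p}$ vanishes identically, and then show this forces both $\s{a} \equiv 0$ and $\s{b} \equiv 0$, contradicting nondegeneracy.

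The first step is to run the costate relations backward to pin down $\s{a}$. With $\lambda_0 = 0$, the endpoint condition \eqref{eq:mixed-constraints-endpoint} reads $p_n = -a_n$, so $p_n = 0$ gives $a_n = 0$. Similarly the adjoint recursion \eqref{eq:mixed-constraints-adjoint} collapses to $p_{i-1} = a_i - D_q F_{i-1}(q_{i-1}, u_{i-1})^* p_i$; because every costate is zero, each such relation forces the corresponding multiplier to vanish, and sweeping the index over its full range yields $a_i = 0$ for every $i$. Hence $\s{a} \equiv 0$. The only delicate element here is index bookkeeping: one must match the convention so that the recursion, read together with the endpoint condition, really exhausts all of $a_1, \dots, a_n$ rather than leaving an endpoint multiplier undetermined.

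The second step is where the new hypothesis does its work: I would invoke the bounded slope condition \eqref{eq:bounded-slope-condition} to eliminate $\s{b}$. For $1 \le i \le n-1$ we have $(a_i, b_i) \in \pL \varphi_i(q_i, u_i)$, so \eqref{eq:bounded-slope-condition} gives $\left\|b_i\right\| \le \kappa \left\|a_i\right\| = 0$ and therefore $b_i = 0$. Combined with $b_0 = 0$, which is part of the conclusion of Theorem \ref{thm:mixed-max-princ}, this shows $\s{b} \equiv 0$; there is no $b_n$ to account for, since $\varphi_n$ depends on the state alone.

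Having established $\s{a} \equiv 0$ and $\s{b} \equiv 0$ under the assumptions $\lambda_0 = 0$ and $\s{p} \equiv 0$, I would close by appealing to the nondegeneracy clause of Theorem \ref{thm:mixed-max-princ}, which is precisely the statement that $(\s{a},\s{b})$ cannot be identically zero when $\lambda_0 \ne 1$. This contradiction rules out the case $\lambda_0 = 0$ with $\s{p} \equiv 0$ and yields the corollary. I do not expect a genuine obstacle: the argument is a short contrapositive, and its substance is simply that the costate equations already control the vanishing of $\s{a}$, while \eqref{eq:bounded-slope-condition} transfers that vanishing to $\s{b}$ — thereby upgrading the alternative from ``$(\s{a},\s{b})$ nonzero'' to ``$\s{p}$ nonzero.''
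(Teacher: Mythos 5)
Your argument is correct and is essentially the paper's own proof read in the contrapositive direction: the paper argues that when $\lambda_0 = 0$ the nondegeneracy clause plus the bounded slope condition force $\s{a} \ne 0$, whence $\s{p} \ne 0$ via \eqref{eq:mixed-constraints-endpoint} and \eqref{eq:mixed-constraints-adjoint}, while you assume $\s{p} \equiv 0$ and run the same two implications backward to contradict nondegeneracy. The substance --- costate equations control $\s{a}$, bounded slope transfers vanishing to $\s{b}$ --- is identical.
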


\begin{proof}
In the case where $\lambda_0 = 0$ we must have $\s{a} \ne 0$, for otherwise the bounded slope condition will force both $\s{a}$ and $\s{b}$ to be zero. We can then see that $\s{p}$ is nonzero from \eqref{eq:mixed-constraints-endpoint} and \eqref{eq:mixed-constraints-adjoint}.
\end{proof}

It's worth comparing the bounded slope condition given by \eqref{eq:bounded-slope-condition} with that found in the later chapters of \cite{clarke2013}, the source from which we borrow the name.

\subsection{Discussion}

We conclude with an analysis of an abstract discrete-time optimal control problem in order to demonstrate some of the above theorems. Let us suppose we are interested in minimizing a function
\begin{equation*}
  J(\s{u}) = \ell(q_n) + \sum_{i = 1}^{n-1} L_i(q_i, u_i)
\end{equation*}
subject to mixed constraints
\begin{equation*}
\left\{ \begin{aligned}
g_j(q_i,u_i) &\le 0 \hspace{10pt} (1 \le i \le n-1, \; 1 \le j \le r) \\
h_j(q_i,u_i) &= 0 \hspace{10pt} (1 \le i \le n-1, \; 1 \le j \le s) \end{aligned} \right.
\end{equation*}
and endpoint constraints
\begin{equation*}
\left\{ \begin{aligned}
G_j(q_n) &\le 0 \hspace{10pt} (1 \le j \le r) \\
H_j(q_n) & \le 0 \hspace{10pt} (1 \le j \le s). \end{aligned} \right.
\end{equation*}
We suppose the all of the above functions are $C^1$-smooth and the the controls are subject only to the mixed constraints, so that $\U_i \colonequals U_i$. Thus the sets $\U_i$ are all Clarke regular.

Suppose that control $\s{u}$ is optimal for the problem and that for each $1 \le i \le n-1$, the only solution to
\begin{equation*}
  \sum_{j = 1}^r \lambda_j dg_j(q_i, u_i) + \sum_{j = 1}^s \mu_j dh_j(q_i,u_i) = 0
\end{equation*}
satisfying the nonnegativity condition $\lambda_j \ge 0$ and the complementary slackness condition $\lambda_j g_j(q_i,u_i) = 0$, each for $1 \le j \le r$, is $\lambda_j, \mu_j \equiv 0$. We make the analogous assumption for constraints $G_j$ and $H_j$.

In this case the constraints are $\varphi_i$-regular along $\s{u}$ and so by Theorem \ref{thm:mixed-max-princ} we can find a sequence $(\lambda_{j,i},\mu_{j,i})_{i = 1, j = 1}^{n, r}$ satisfying the nonnegativity condition $\lambda_{j,i} \ge 0$ and the complementary slackness conditions
\begin{equation*}
\left\{ \begin{aligned}
\lambda_{j,i} g_j(q_i,u_i) & = 0 \hspace{10pt} (1 \le i \le n-1,\; 1 \le j \le r)\\
\lambda_{n,i} G_j(q_i) & = 0 \hspace{10pt} (1 \le j \le r) \end{aligned}\right.
\end{equation*}
along with a number $\lambda_0 \in \left\{0,1\right\}$ such that the costate sequence defined by
\begin{equation*}
  -p_n \colonequals \lambda_0 d\ell(q_n) + \sum_{j = 1}^r \lambda_{n,j} dG_j(q_n)
\end{equation*}
and
\begin{align*}
  p_{i-1} & = - \lambda_0 d_q L_{i-1}(q_{i-1}, u_{i-1}) - \sum_{j = 1}^r \lambda_{j,i-1} d_q g_j (q_{i-1}, u_{i-1}) \\
   & \quad - \sum_{j = 1}^s \mu_{j,i-1} d_q h_j (q_{i-1}, u_{i-1}) + D_qF_{i-1}(q_{i-1}, u_{i-1})^* p_i
\end{align*}
satisfies for all $v \in T_{u_{i}} U_i$
\begin{align*}
  &\left< \lambda_0 d_u L_{i}(q_{i}, u_{i})+\sum_{j = 1}^r \lambda_{j,i}d_u g_{j}(q_{i}, u_{i}) + \sum_{j = 1}^s \mu_{j,i}d_u h_{j}(q_{i}, u_{i})- D_uF_{i}(q_{i}, u_{i})^* p_{i+1},v\right>\\
	& \quad \ge 0 \hspace{10pt} (0 \le i \le n-1).
\end{align*}
This last inequality forces
\begin{align*}
 D_uF_{i-1}(q_{i-1}, u_{i-1})^* p_i &= \lambda_0 d_u L_{i-1}(q_{i-1}, u_{i-1})+\sum_{j = 1}^r \lambda_{j,i-1}d_u \varphi_{i-1}(q_{i-1}, u_{i-1}) \\
 & \quad + \sum_{j = 1}^s \mu_{j,i}d_u h_{j}(q_{i}, u_{i})
\end{align*}
Moreover, either $\lambda_0 = 1$, $p_n \ne 0$, or for some $1 \le j \le n-1$ we have
\begin{equation*}
  \sum_{j = 1}^r \lambda_{j,i} dg_j(q_i,u_i) + \sum_{j = 1}^r \mu_j dh_j(q_i, u_i) \ne 0.
\end{equation*}
If in addition the constraints can be shown to satisfy the bounded slope condition:
\begin{align*}
&  \left\|\sum_{j = 1}^r \lambda_{j,i} d_u g_j(q_i,u_i) + \sum_{j =1}^s \mu_{j,i} d_u h_j(q_i, u_i) \right\| \\
&  \quad \le \kappa \left\|\sum_{j = 1}^r \lambda_{j,i} d_q g_j(q_i,u_i) + \sum_{j =1}^s \mu_{j,i} d_q h_j(q_i, u_i) \right\|
\end{align*}
for some $\kappa > 0$ and for $1 \le i \le n-1$ then we may suppose that either $\lambda_0 = 1$ or else $\s{p} \not \equiv 0$.

The results of Section \ref{sec:mixed-sensitivity} can be applied to study the sensitivity of this problem to perturbations in the constraints. Finally, we remark that in many cases much more can be said using the results of this paper. For example, if the problem evolves on a Lie group according to $g_{i} \colonequals g_{i-1} u_{i-1}$ then the techniques of Section \ref{sec:liegroups} may be applied and if $F_i$ factors as \eqref{eq:factorization} then in many applications $D_qF_i^*$ can be profitably expressed in terms of the fibre derivative. In this case, additional assumptions will lead in a natural way to a maximum principle as in \eqref{eq:max-princ-fiber-version} and \eqref{eq:hamiltonian-constraints}.

\section*{Acknowledgment}
The work presented in this paper was carried out while the second author was a postdoctoral fellow at the Institute for Mathematics and its Applications (IMA) during the IMA's annual program on \textit{``Control Theory and its Applications''}.

\bibliographystyle{abbrv}
\bibliography{dgmpbib}

\end{document}